\def\Dy#1{\Frac{\partial #1}{\partial y}}
\def\Dy_1y_1#1{\Frac{\partial^2 #1}{\partial y_1^2}}
\def\reff#1{{\rm(\ref{#1})}}
\newtheorem{Theorem}{Theorem}[part]
\newtheorem{Definition}{Definition}[part]
\newtheorem{Proposition}{Proposition}[part]
\newtheorem{Lemma}{Lemma}[part]
\newtheorem{Remark}{Remark}[part]
\makeatletter \@addtoreset{equation}{section}
\def \Int{\displaystyle\int}
\def \Frac{\displaystyle\frac}
\def \Inf{\displaystyle\inf}
\def \Sup{\displaystyle\sup}
\def \Max{\displaystyle\max}
\def \be{\begin{eqnarray}}
\def \ee{\end{eqnarray}}
\def \b*{\begin{eqnarray*}}
\def \e*{\end{eqnarray*}}
\def \E{\mathbb{E}}
\def \L{\mathbb{L}}
\def \N{\mathbb{N}}
\def \P{\mathbb{P}}
\def \R{\mathbb{R}}
\def \W{\overleftarrow{W}}
\def \B{\overleftarrow{B}}
\def \[{[\,\!\![}
\def \]{]\,\!\!]}
\def \1{{\bf 1}}
\def \ep{\hbox{ }\hfill$\Box$}
\def\reff#1{{\rm(\ref{#1})}}
\def\Cc{{\cal C}}
\def\Dc{{\cal D}}
\def\Fc{{\cal F}}
\def\Gc{{\cal G}}
\def\Hc{{\cal H}}
\def\Lc{{\cal L}}
\def\Nc{{\cal N}}
\def\Lc{{\cal L}}
\def\Sc{{\cal S}}
\def\Oc{{\cal O}}
\def\Rc{{\cal R}}
\begin{document}

\begin{frontmatter}

\title{Numerical Computation for Backward Doubly SDEs with random terminal time}
\runtitle{Numerical computation  for BDSDEs}


\author{\fnms{Anis} \snm{Matoussi}\ead[label=e1]{anis.matoussi@univ-lemans.fr}\thanksref{t2}}
\thankstext{t2}{The research of the first author was partially supported by the Chair {\it Financial Risks} of the {\it Risk Foundation} sponsored by Soci\'et\'e G\'en\'erale, the Chair {\it Derivatives of the Future} sponsored by the {F\'ed\'eration Bancaire Fran\c{c}aise}, and the Chair {\it Finance and Sustainable Development} sponsored by EDF and Calyon }
\address{University of Maine \\
Risk and Insurance Institute of Le Mans \\
Laboratoire  Manceau de Math\'ematiques\\ Avenue Olivier Messiaen\\
 \printead{e1}}
  \author{\fnms{Wissal} \snm{Sabbagh}\corref{}\ead[label=e2]{wissal.sabbagh@univ-lemans.fr}}

\address{ University of Maine \\
Risk and Insurance Institute of Le Mans \\
Laboratoire Manceau de Math\'ematiques\\ Avenue Olivier Messiaen \\ \printead{e2}}
 \affiliation{University of Le Mans}

\affiliation{University of Le Mans}

\runauthor{Matoussi and Sabbagh }

\begin{abstract} 
In this article, we are interested in solving numerically backward doubly stochastic differential equations (BDSDEs) with random terminal time $\tau$. The main motivations are giving a probabilistic representation of the Sobolev's solution of Dirichlet problem for semilinear SPDEs and providing the numerical scheme for such SPDEs. Thus, we study the strong approximation of this class of BDSDEs when $\tau$ is the first exit time of a forward SDE from a cylindrical domain. Euler schemes and bounds for the discrete-time approximation error are provided.

\end{abstract}
\begin{keyword}
\kwd{Backward Doubly Stochastic Differential Equation}
\kwd{Monte carlo method}
\kwd{Euler scheme} 
\kwd{Exit time}
\kwd{Stochastic flow, SPDEs, Dirichlet condition}
\end{keyword}

\begin{keyword}[class=AMS]
\kwd[Primary ]{60H15}
\kwd{60G46}
\kwd[; secondary ]{35H60}
\end{keyword}

\end{frontmatter}
\section{Introduction}
\label{introduction}
Backward stochastic differential equations (BSDEs in short) are natural tools to give a probabilistic interpretation for the solution of a class of semilinear PDEs (see \cite{P91}, \cite{DP97}). By introducing in standard BSDEs a second nonlinear term driven by an external noise, we obtain Backward Doubly SDEs (BDSDEs) \cite{pp1994}, namely, 
\begin{equation}
\label{BDSDE}
Y_{t} \; = \;  
  \xi  \, + \, 
 \int_{t}^{ T} f(s,Y_{s},Z_{s})
\, ds \; + \;
 \int_{t}^{T} g(s,Y_{s},Z_{s})
\, d\W_s \; -
 \;  \int_{t}^{T} Z_{s} \,  dB_{s}, \; 0 \leq t \leq T\,  .
\end{equation}
where $(W_t)_{t\geq 0}$ and $(B_t)_{t\geq 0}$  are two  finite-dimensional independent Brownian motions. We note that the integral with respect to $B$ is a "backward It\^o integral".  In the Markovian setting, these equations can be seen as Feynman-Kac's representation of Stochastic PDEs and form a powerful tool for numerical schemes \cite{matouetal13, BM14}. These SPDEs  appear in various applications as, for instance, Zakai equations in filtering, pathwise stochastic control theory and stochastic control with partial observations. \\
Several generalizations to investigate more general nonlinear SPDEs have been developed following different approaches of the notion of weak solutions: the technique of stochastic flow (Bally and Matoussi \cite{BM01}, Matoussi et al.  \cite{MS02}); the approach based on Dirichlet forms and their associated Markov processes (Denis and Stoica \cite{DS04}, Bally, Pardoux and Stoica \cite{BPS05}, Denis, Matoussi and Stoica \cite{DMS09, DMS10}); stochastic viscosity solution for SPDEs (Buckdahn and Ma \cite{buck:ma:10a,buck:ma:10b}, Lions and Souganidis \cite{lion:soug:00, lion:soug:01, lion:soug:98}).
Above approaches have allowed the study of numerical schemes for the Sobolev solution of semilinear SPDEs via Monte-Carlo methods (time discretization and regression schemes \cite{matouetal13, BGM14a, BM14}). \\
In the case when we consider the whole space $ \Oc = \mathbb R^d$, the numerical approximation of the BSDE has already been studied in the literature by Bally \cite{B98}, Zhang \cite{Z04}, Bouchard and Touzi \cite{BT04}, Gobet, Lemor and Warin\cite{GLW05}. Bouchard and Touzi \cite{BT04} and  Zhang \cite{Z04} proposed a discrete-time numerical approximation, by step processes, for a class of decoupled FBSDEs with possible path-dependent terminal values. Zhang  \cite{Z04}  proved a regularity result on Z, which allows the use of a regular deterministic time mesh. In Bouchard and Touzi \cite{BT04}, the conditional expectations involved in their discretization scheme were computed by using the kernel regression estimation. Therefore, they used the Malliavin approach and the Monte carlo method for its computation. Crisan, Manolarakis and Touzi \cite{CMT10} proposed an improvement on the Malliavin weights. Gobet, Lemor and Warin in \cite{GLW05} proposed an explicit numerical scheme based on Monte Carlo regression on a finite basis of functions. Their approach is more efficient, because it requires only one set of paths to approximate all regression operators. 
These Monte Carlo type numerical schemes  are investigated to solve numerically the solution of semilinear PDEs. These latter methods are tractable especially when the dimension of the state process is very large unlike the finite difference method. 
For BDSDEs where the coefficient $g$   does not depend on the control variable $z$, Aman \cite{A13} proposed a numerical scheme following
the idea used by Bouchard and Touzi \cite{BT04} and obtained a convergence of order $h$ of the square of the $L^2$- error ($h$ is the discretization step in time).   
Aboura \cite{A09} studied the same numerical scheme under the same kind of hypothesis, but following Gobet et al. \cite{GLW06}. He obtained a convergence of order $h$  in time and he attempted for a Monte Carlo solver.  Bachouch et al \cite{matouetal13} have studied the rate of convergence  of the time discretization error for BDSDEs in the case when 
the coefficient $g$ depending on $ (y,z)$. They presented an implementation  and numerical tests for such Euler scheme.  Bachouch, Gobet and Matoussi \cite{BGM14a} have recently  analyzed  the regression error arising from an algorithm approximating the solution of a discrete- time BDSDEs. They have studied the rate of converge of such error in the case when the coefficients of the BDSDEs depend only on the variable $y$. \\
For BSDEs with finite random time horizon, namely, the first exit time of a forward SDEs from a domain $ \Oc$, Bouchard and Menozzi \cite{BM09} studied the Euler scheme  of these equations and provided the upper bounds for the discrete time approximations error  which is at most of order $ h^{1/2 - \varepsilon}$ where $ \varepsilon$ is any positive parameter. This  rate of convergence is due to the approximation  error of the exit time.  These results are obtained  when the domain $ \Oc$ is piecewise smooth and under a non-charateristic  boundary condition (without uniform ellipticity condition). Bouchard, Gobet and Geiss \cite{BGG13} have improved this error  which is  now at most of order $ h^{1/2}$ even if the time horizon is unbounded.\\
In this paper, we are concerned with numerical scheme for  backward doubly SDEs with random terminal time. These latter equations give the probabilistic  interpretation  for the weak-Sobolev's solutions of a class of semilinear stochastic partial differential equations (SPDEs in short) with Dirichlet null condition on the boundary of some smooth domain $\Oc \subset \mathbb R^d$ . An alternative method to solve numerically nonlinear SPDEs is an analytic one, based on time- space discretization of the SPDEs. The discretization in space can be achieved either by finite differences, or finite elements \cite{W05} and spectral Galerkin methods \cite{JK10}. But most numerical works on SPDEs have concentrated on the Euler finite-difference scheme. Very interesting results have been obtained by Gyongy and Krylov \cite{GK10}. The authors consider a symmetric finite difference scheme for a class of linear SPDE driven by an infinite dimensional Brownian motion.\\
Our contributions in this paper are as following: first of all, BDSDEs with random terminal time are introduced and results of existence and uniqueness of  such BDSDEs are established by means of some transformation to classical BSDEs  studied by  Peng \cite{P91}, Darling and Pardoux \cite{DP97} and Briand et al \cite{BDHPS03}. Next,  Euler  numerical scheme for a Forward-BDSDEs is developed where we provide  upper bounds for the discrete time approximations error  which is at most of order $ h^{1/2}$.
Then probabilistic representation for the weak solution of semilinear SPDEs with Dirichlet null condition on the boundary of the domain $ \Oc$ is given by means of solution of BDSDEs  with random terminal  time. This is done by using localization procedure and stochastic flow technics (see e.g. \cite{BM01}, \cite{MS02}, \cite{K94a, K94b}  for these flow technics).  \\
This paper is organized as follows: in Section 2, first the basic assumptions and the definitions of the solutions for BDSDEs with random terminal time are presented.  Then,  existence and  uniqueness results of  such equations are given by using fixed point theorem.  In Section 3, we develop  a discrete-time approximation of a Forward-Backward Doubly SDE with finite stopping time horizon, namely the first exit time of a forward SDE from a  domain $\Oc$. The main result of this section is providing a rate of convergence of order $ h^{1/2}$  for the square of Euler time discretization error  for   Forward-Backward Doubly SDE scheme \eqref{euldiscrete}-\eqref{Yn}. Moreover,  we relate the BDSDE in the Markovian setting  to Sobolev semilinear SPDEs with Dirichlet null condition by proving Feynman-Kac's formula in Section \ref{SPDE:section}. 
Finally, the last Section is devoted to numerical implementations and tests. 
\section{Backward doubly stochastic differential equations with random terminal time}
\label{BDSDE:section}
Any element $x\in\R^d$, $d\geq 1$, will be identified with a line vector with ith component $x^i$ and its Euclidean norm defined by $|x|=(\sum_i |x_i|^2)^{1/2}$. For each real matrix $A$, we denote by $\|A\|$ its Frobenius norm defined by $\|A\|=(\sum_{i,j}a_{i,j}^2)^{1/2}$.\\
Let $ (\Omega, {\Fc},\P)$ be a probability space, and let 
$ \{W_{t}, 0 \leq t \leq T \}$ and $ \{B_{t}, 0 \leq t \leq T \}$ 
be two mutually independent standard Brownian motions with values
in $ \R^l $ and $ \R^d $.
For each  $ 0 \leq s \leq T $, we define  
$$
{\Fc}_{s} \; := \;  {\Fc}_{s}^{B} \lor {\Fc}_{s,T}^{W},
$$
with
${\Fc}_{s}^{B}\, := \, \sigma (B_{r}; \, 0 \leq r \leq s \, ) $ and
$ {\Fc}_{s,t}^{W} \,:= \, \sigma ( W_{r} \, - \, W_{s};
s \leq r \leq t ) \lor {\Nc}$ where
$ {\Nc}$ is the class
of $\P$-null sets of $ {\Fc}$.
Note that $ \left ( {\Fc}_{t} \right )_{t \leq T} $ is not an increasing
family of $\sigma$-fields, so it is not a filtration.\\
Hereafter, let us define the spaces and the norms which will be needed for the formulation of the BDSDE with random terminal time.\\
\begin{description}
\item[-] $\mathbf{L}^p({\mathcal F}_{\tau}^{B})$ the space of $\R^k$ valued
${\mathcal F}_{\tau}^{B}$-measurable random variables $\xi$ such that
$$\begin{array}{ll}
          \|\xi\|_{L^p}^p:=\E(e^{\lambda\tau}|\xi|^p)<+\infty\,\,;
  \end{array}$$

\item[-] ${\mathcal H}^2_{k\times d}([0,T])$ the space of $\R^{k\times
d}$-valued $\Fc_t$-measurable process $Z=(Z_t)_{t\leq T}$
such that
$$\begin{array}{ll}
          \|Z\|_{{\mathcal H}^2}^2:=\E[\Int_{0}^{\tau} e^{\lambda t}|Z_t|^2dt]<+\infty\,\,;
  \end{array}$$

\item[-] ${\mathcal S}^2_{k}([0,T])$ the space of $\R^k$ valued $\Fc_t$-adapted processes $Y=(Y_t)_{t\leq T}$, with continuous paths such that
$$\|Y\|_{{\mathcal S}^2}^2:= \E[\,\underset{t\leq \tau}{\Sup}\,e^{\lambda t}|Y_t|^2]<+\infty\,\,;$$
\end{description}
We need the following assumptions:\\[0.3cm]
{\bf{Assumption (HT)}}\label{assumptionHT}
The final random time $\tau$ is an $\Fc^{B}_{\tau}$-stopping time and the final condition $ \xi$ belongs to $\mathbf{L}^2({\mathcal F}_{\tau}^{B})$.\\[0.3cm]
{\bf{Assumption (HL)}}\label{assumptionHL}
The two coefficients $f$ : 
$   \Omega\times \lbrack 0,T \rbrack \times \R^k \times 
\R^{k \times d} \to   \R^k $ and  $
g \, :  \Omega\times \lbrack 0,T \rbrack \times \R^k \times 
\R^{k \times d} \to   \R^{k \times l}$ satisfy: for all $t\in[0,T]$, $(y,z), (y^{\prime},z^{\prime})\in \R^k \times \R^{k \times d}$ and for some real 
numbers $ \alpha, \, \mu, \, \lambda,\, K>0,\, C>0,\, \displaystyle  \lambda > \frac{2K}{1 - \alpha} - 2 \mu + C$ and $ 0< \alpha < 1 $,
\begin{itemize}
\item[(i)] $f(.,y,z)$ and $g(.,y,z)$ are ${\Fc}_t $ measurable,
\item[(ii)] $ |f(t,y,z) \, - \, f(t,y^{\prime},z^{\prime}) | 
 \leq   \; K\big(|y \, - \, y^{\prime} | +\|z \, - \, z^{\prime} \|\big) $,
\item[(iii)] $ \left\langle y-y^{\prime}  \, , \, f(t,y,z) \, - \,
f(t,y^{\prime},z) \; \right\rangle \; \leq \, -\mu \, | y -y^{\prime}|^2, $
\item[(iv)] $ \|  g(t,y,z) \, - \, g(t,y^{\prime},z^{\prime}) \|^2
\; \leq   \; C \, |y \, - \, y^{\prime}|^2 \, + \, \alpha \, 
\| z \, - \, z^{\prime} \|^2  ,$
\item[(v)] $ \displaystyle \E \, \int_{0}^{\tau} e^{\lambda \, s } |f(t,0,0)|^2 \; ds \, < \, 
\infty $ and
 $ \E \, \Int_{0}^{\tau} e^{\lambda \, s } \|g(t,0,0)\|^2 \; ds \, < \, 
\infty .$ 
\end{itemize} 
Now we introduce the definition of BDSDEs with random terminal time $\tau$ and associated with $(\xi, f,  g )$.
\begin{Definition}
A solution of BDSDE $(\tau, \xi, f, g)$ is a couple 
$ \{ ( Y_{s},  Z_{s} ); \, 0 \leq s \leq T  \}\in{\mathcal S}^2_{k}([0,T])\times {\mathcal H}^2_{k\times d}([0,T]) $  such that 
$Y_t=\xi$ on the set $\{t\geq \tau\}$, $Z_t=0$  on the set $\{t>\tau\}$ and
 \begin{equation}
\label{defBDSDE r.t}
Y_{t} \; = \;  
  \xi  \, + \, 
 \Int_{t}^{\tau\wedge T} f(s,Y_{s},Z_{s})
\, ds \; + \;
 \Int_{t}^{\tau\wedge T} g(s,Y_{s},Z_{s})
\, d\W_s \; -
 \;  \Int_{t}^{\tau\wedge T} Z_{s} \,  dB_{s}, \; 0 \leq t \leq \tau\,  .
\end{equation}
\end{Definition}
We note that the integral with respect to $W$ is a "backward It\^o integral" (see Kunita \cite{K84} for the definition) and the integral with respect to $B$
is a standard forward It\^o integral. We establish  in the following theorem the existence and uniqueness of the solution for BDSDE \eqref{defBDSDE r.t} which is an extension of Peng's results \cite{P91} in the standard BSDE case. This result is also given in \cite{MPP15} (Theorem 1) for deterministic terminal time $T$ and under weaker assumptions on the coefficient $f$, namely $f$ satisfies monotonicity condition and polynomial growth in $y$. For ease of reference and completness, we give the proof of this result.
\begin{Theorem} Under the Assumptions $({\bf{HT}})$ and $({\bf{HL}})$, there exists a unique solution  $ \{ ( \, Y_{s}, \, Z_{s} \, ); \, 0 \leq s \leq T \, \}\in{\mathcal S}^2_{k}([0,T])\times {\mathcal H}^2_{k\times d}([0,T])  $ of the BDSDE \eqref{defBDSDE r.t}.
\end{Theorem}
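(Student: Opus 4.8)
The plan is to construct the solution by a Picard/fixed-point argument carried out in the exponentially weighted space ${\mathcal S}^2_k([0,T])\times{\mathcal H}^2_{k\times d}([0,T])$, combining the iteration scheme of Pardoux and Peng for backward doubly SDEs with the monotonicity technique of Darling and Pardoux and of Briand et al.\ that makes the (possibly unbounded) random horizon $\tau$ tractable. The weight $e^{\lambda t}$ is the key device: the standing inequality $\lambda>\frac{2K}{1-\alpha}-2\mu+C$ is exactly what converts the Lipschitz/monotonicity bounds into a \emph{strict} contraction while keeping all the norms finite on $\{\tau=+\infty\}$.

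First I would reduce to finite deterministic horizons: set $\tau_N:=\tau\wedge N$ and solve, for each $N$, the BDSDE on $[0,N]$ with terminal data $\xi$ imposed at $\tau_N$ (and $Y\equiv\xi$, $Z\equiv 0$ afterwards). On a bounded horizon the equation is solved by a contraction mapping: given $(U,V)\in{\mathcal S}^2_k\times{\mathcal H}^2_{k\times d}$, let $(Y,Z)=\Phi(U,V)$ be the solution of the linear equation
\begin{equation*}
Y_t=\xi+\int_{t}^{\tau_N} f(s,U_s,V_s)\,ds+\int_{t}^{\tau_N} g(s,U_s,V_s)\,d\W_s-\int_{t}^{\tau_N} Z_s\,dB_s,
\end{equation*}
which is well posed because, the driver being frozen, the right-hand side is a square-integrable terminal variable and $Z$ is furnished by the martingale representation available in the doubly stochastic setting. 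The contraction estimate comes from applying It\^o's formula to $e^{\lambda t}|Y_t-Y'_t|^2$ for two inputs $(U,V),(U',V')$ with outputs $(Y,Z),(Y',Z')$.

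The point of that computation is that the backward It\^o integral $\int g\,d\W$ contributes its quadratic variation with a $+$ sign, so one picks up a term $+\int e^{\lambda s}\|g(s,U_s,V_s)-g(s,U'_s,V'_s)\|^2\,ds$ on the right-hand side; bounding it through (iv), estimating the $f$-difference through (ii) together with Young's inequality (here the drift argument is $(U,V)$ rather than $(Y,Z)$), and choosing the weight as in the hypotheses, one can absorb the $\|Z-Z'\|^2$ contribution on the left-hand side precisely because $\alpha<1$, obtaining $\|\Phi(U,V)-\Phi(U',V')\|\le c\,\|(U,V)-(U',V')\|$ with $c<1$ in the weighted product norm. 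Banach's theorem then yields a unique fixed point $(Y^N,Z^N)$ on $[0,N]$. Running the \emph{same} weighted It\^o estimate on the genuine equation --- now using the monotonicity (iii) to treat the $\langle Y-Y',f(\cdot,Y,\cdot)-f(\cdot,Y',\cdot)\rangle$ term --- produces bounds on $(Y^N,Z^N)$ that are uniform in $N$ and shows the family is Cauchy in ${\mathcal S}^2_k\times{\mathcal H}^2_{k\times d}$; the limit $(Y,Z)$ solves \eqref{defBDSDE r.t} on $[0,\tau\wedge T]$, and one checks that $Y_t=\xi$ on $\{t\ge\tau\}$ and $Z_t=0$ on $\{t>\tau\}$ pass to the limit. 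Uniqueness follows by feeding the difference of two solutions into the same a priori estimate.

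I expect the main obstacle to be the passage to the random (and possibly infinite) terminal time rather than the bounded-horizon contraction: one must make sure the weighted estimates are genuinely uniform in $N$ --- this is where the strict inequality on $\lambda$, the monotonicity constant $\mu$, and the integrability condition (v) are all used --- that the local martingale terms in the It\^o expansion vanish in expectation after a suitable localization, which needs care since $(\Fc_t)_{t\le T}$ is not a filtration and both a forward ($dB$) and a backward ($d\W$) stochastic integral occur, and that the limiting pair still lies in ${\mathcal S}^2_k\times{\mathcal H}^2_{k\times d}$ and not merely in some larger space. Once the weighted norms are correctly set up, the remaining work --- finite-horizon existence and the Cauchy--Schwarz/Young bookkeeping --- is routine.
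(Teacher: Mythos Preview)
Your plan is sound but structurally different from the paper's argument. The paper does not truncate the horizon: since the processes live on $[0,T]$ and the effective terminal time is $\tau\wedge T$, no limit in $N$ is needed, and your anticipated difficulty with ``possibly infinite'' $\tau$ does not arise in this setting. For existence, the paper proceeds in two steps. First, when $g$ does not depend on $(y,z)$, the substitution $\bar Y_t:=Y_t+\int_0^t g(s)\,d\W_s$ eliminates the backward integral and reduces \eqref{defBDSDE r.t} to a standard BSDE with random terminal time, to which Peng's 1991 result applies directly. Second, for general $g$, the fixed-point map freezes \emph{only} $g$ at $(\bar Y,\bar Z)$ while keeping $f(s,Y_s,Z_s)$ live; the contraction estimate then uses the monotonicity of $f$ together with the Lipschitz bound on $g$, and the standing inequality $\lambda>\frac{2K}{1-\alpha}-2\mu+C$ is precisely what makes this map a strict contraction in the weighted norm.

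Your alternative---freeze both drivers, solve the affine equation by martingale representation, and get a contraction via Young's inequality---also works, but note that with $f$ frozen you cannot invoke monotonicity in the contraction step, so the $\lambda$ from the hypotheses may not suffice; on a bounded horizon this is harmless because all exponential weights give equivalent norms, and you implicitly rely on this. The paper's choice to freeze only $g$ is cleaner because monotonicity enters the contraction directly and the given $\lambda$ works without adjustment; in exchange, the paper must invoke Peng's BSDE result as a black box for the base case, a step you bypass. Uniqueness is handled identically in both approaches, via the weighted It\^o estimate on the difference of two solutions.
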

\begin{proof}\\
{\bf{a) Uniqueness}}: Let $(Y^1,Z^1)$ and $(Y^2,Z^2)$ be two solutions of  the BDSDE  \eqref{defBDSDE r.t} and  denote by  $(\bar{Y},\bar{Z}):=(Y^1-Y^2,Z^1-Z^2)$. Applying generalized   It\^o formula (see Lemma 1.3 in \cite{pp1994})  to $e^{\lambda s}|\bar{Y}_s|^2$ yields
\be \label{unicite}
e^{\lambda t}|\bar{Y}_{t}|^2&+&\Int_{t}^{\tau\wedge T} e^{\lambda s}\big(\lambda |\bar{Y}_s|^2+ \|\bar{Z}_s\|^2\big)ds  = 2\Int_{t}^{\tau\wedge T}e^{\lambda s}\left\langle\bar{Y}_s,f(s,Y_s^1,Z_s^1)-f(s,Y_s^2,Z_s^2)\right\rangle ds\nonumber\\
& + & 2\Int_{t}^{\tau\wedge T} e^{\lambda s}\left\langle\bar{Y}_s,g(s,Y_s^1,Z_s^1)-g(s,Y_s^2,Z_s^2)\right\rangle d\W_s -2\Int_{t}^{\tau\wedge T} e^{\lambda s}\left\langle\bar{Y}_s,\bar{Z}_s\right\rangle dB_s\nonumber\\
&+&  \Int_{t}^{\tau\wedge T} e^{\lambda s} \| g(s,Y_s^1,Z_s^1)-g(s,Y_s^2,Z_s^2)\|^2ds. 
\ee
Then, taking expectation we obtain
\b* 
\E[e^{\lambda t}|\bar{Y}_{t}|^2]+\E[\Int_{t}^{\tau\wedge T}e^{\lambda s}\big(\lambda |\bar{Y}_s|^2+ \|\bar{Z}_s\|^2\big)ds ] &=& 2\E[\Int_{t}^{\tau\wedge T} e^{\lambda s}\left\langle\bar{Y}_s,f(s,Y_s^1,Z_s^1)-f(s,Y_s^2,Z_s^2)\right\rangle ds]\nonumber\\
&+& \E[\Int_{t}^{\tau\wedge T} e^{\lambda s} \| g(s,Y_s^1,Z_s^1)-g(s,Y_s^2,Z_s^2)\|^2ds]. 
\e*
From Assumption $({\bf{HL}})$ there exists $0<\varepsilon<1$ such that
$$2 \left\langle\bar{Y}_s,f(s,Y_s^1,Z_s^1)-f(s,Y_s^2,Z_s^2)\right\rangle\leq (-2\mu+\Frac{K}{1-\varepsilon})|\bar{Y}_s|^2+ (1-\varepsilon)\|\bar{ Z}_s\|^2,$$
which together with the Lipschitz continuous assumption on $g$ provide 
\b* 
\E\big[e^{\lambda t}|\bar{Y}_{t}|^2\big]+\E\big[\Int_{t}^{\tau\wedge T} e^{\lambda s}\big(\lambda |\bar{Y}_s|^2+ \|\bar{Z}_s\|^2\big)ds\big ]&\leq & \E\big[\Int_{t}^{\tau\wedge T} e^{\lambda s}(-2\mu+\Frac{K}{1-\varepsilon}+C)|\bar{Y}_s|^2ds\big]\\
&+& \E\big[\Int_{t}^{\tau\wedge T} e^{\lambda s}(\alpha+1-\varepsilon )\|\bar{Z}_s\|^2ds\big],
\e*
where $0<\alpha <1$. Consequently
\b* 
\E\big[e^{\lambda t}|\bar{Y}_{t}|^2\big]+\E\big[\Int_{t}^{\tau\wedge T} e^{\lambda s}\big((\lambda+2\mu-\Frac{K}{1-\varepsilon}-C) |\bar{Y}_s|^2+ (\varepsilon-\alpha )\|\bar{Z}_s\|^2\big)ds\big ]&\leq & 0.
\e*
Next, choosing $\varepsilon = \Frac{1 + \alpha}{2}$ and since $\lambda+2\mu-\Frac{2K}{1-\alpha}-C>0$, we conclude that 
$$Y^1_t=Y_t^2~ ~\text{and} ~~Z_t^1=Z_t^2\, , \, \P-a.s.\,, \; \forall t \in [0, \,  T].$$
{\bf{b) Existence}}: The existence of a solution will be proven in two steps. In the first step, we suppose that $g$ does not depend on $y,z$, then we are able to transform our BDSDE with data $(\tau,\xi,f,g)$ into a BSDE $(\tau,\bar{\xi},\bar{f})$, where $\bar{\xi}$ and $\bar{f}$ are explicited below. Thus, the existence is proved by appealing to the existence result for BSDEs with random terminal time estblished by Peng 1991. In the second step, we study the case when $g$ depends on $y,z$ using Picard iteration.
\\[0.1cm] 
\textit{Step 1} : Suppose that $g:=g^0$ does not depend on $y,z$, and the BDSDE \eqref{defBDSDE r.t} becomes
\be
Y_t=\xi+\Int_{t}^{\tau\wedge T} f(s,Y_s,Z_s)ds+\Int_{t}^{\tau\wedge T} g(s) d\W_s-\Int_{t}^{\tau\wedge T}  Z_sdB_s , \quad 0\leq t\leq T.
\label{BDSDElinearrandomtime}
 \ee
Denoting $$\quad \bar{Y}_t:= Y_t+\Int_0^t g(s)d\W_s,\quad \bar{\xi}:=\xi+\Int_0^\tau g(s)d\W_s,$$
we have the following BSDE
\be\label{BSDErandomtime}
\bar{Y}_{t}=\bar{\xi}+\Int_{t}^{\tau\wedge T} \bar{f}(s,\bar{Y}_s,Z_s)ds
-\Int_{t}^{\tau\wedge T} Z_{s}dB_{s}, \quad 0\leq t\leq T .
\ee
where $\bar{f}(s,y,z):= f(s,y-\Int_0^t g(s)d\W_s,z)$. We can easily check that $\bar{\xi}$ and $\bar{f}$ satisfy the same assumptions that Peng \cite{P91} (Theorem 2.2) has proved for the existence and uniqueness of the solution for the standard BSDE \eqref{BSDErandomtime}. Thus, we get the existence of the solution for the BDSDEs \eqref{BDSDElinearrandomtime}. \\[0.3cm]
\textit{Step 2} : The nonlinear case when $g$ depends on $y,z$. The solution is obtained by using  the fixed point Banach theorem. For any given $(\bar{Y},\bar{Z})\in{\mathcal H}^2_k([0,T])\times{\mathcal H}^2_{k\times d}([0,T])$, let consider the BDSDE with random terminal time:
\be
Y_t=\xi+\Int_{t}^{\tau\wedge T} f(s,Y_s,Z_s)ds+\Int_{t}^{\tau\wedge T} g(s,\bar{Y}_s,\bar{Z}_s) d\W_s-\Int_{t}^{\tau\wedge T} Z_sdB_s, \quad 0\leq t\leq T.
\label{BDSDE gen}
 \ee 
 It follows from Step 1 that the BDSDE (\ref{BDSDE gen}) has a unique solution $(Y,Z) \in {\mathcal H}^2_k([0,T])\times{\mathcal H}^2_{k\times d}([0,T])  $. Therefore, the mapping:
 \b*
 \Psi : {\mathcal H}^2_k([0,T])\times{\mathcal H}^2_{k\times d}([0,T])&\longrightarrow& {\mathcal H}^2_k([0,T])\times{\mathcal H}^2_{k\times d}([0,T])\\
   (\bar{Y},\bar{Z})&\longmapsto &\Psi(\bar{Y},\bar{Z})=(Y,Z)
 \e*
 is well defined.\\
 Next, let $(Y,Z), (Y^{'},Z^{'}), (\bar{Y},\bar{Z})$ and $(\bar{Y^{'}},\bar{Z^{'}})\in{\mathcal H}^2_k([0,T])\times{\mathcal H}^2_{k\times d}([0,T])$ such that $(Y,Z)=\Psi(\bar{Y},\bar{Z})$ and $(Y^{'},Z^{'})=\Psi(\bar{Y^{'}},\bar{Z^{'}})$ and set $\Delta\eta=\eta-\eta^{'}$ for $\eta=Y, \bar{Y}, Z, \bar{Z}, K$. Applying It\^o formula and taking expectation yield to
 \begin{align*}
 \E[e^{\lambda t}|\Delta Y_t|^2]+ \E\big[\Int_{t}^{\tau\wedge T} e^{\lambda s}\big(\lambda |\delta Y_s|^2+ \|\delta Z_s\|^2\big)ds\big ]&= 2\E[\Int_{t}^{\tau\wedge T} e^{\lambda s}\langle\Delta Y_s,f(s,Y_s,Z_s)-f(s,Y_s^{'},Z_s^{'})\rangle ds]\\
 &+ \E[\Int_{t}^{\tau\wedge T}e^{\lambda s}\|g(s,\bar{Y}_s,\bar{Z}_s)-g(s,\bar{Y^{'}}_s,\bar{Z^{'}}_s)\|^2ds].
\end{align*}
From Assumption $({\bf{HL}})$ there exists $\alpha<\varepsilon<1$ such that
$$\langle\Delta Y_s,f(s,Y_s,Z_s)-f(s,Y_s^{'},Z_s^{'})\rangle\leq (-2\mu+\Frac{K}{1-\varepsilon})|\Delta Y_s|^2+ (1-\varepsilon)\|\Delta Z_s\|^2,$$
which together with the Lipschitz continuous assumption on $g$ provide
\begin{align*}
 \E[e^{\lambda t}|\Delta Y_t|^2]&+ (\lambda+2\mu-\Frac{K}{1-\varepsilon}) \E[\Int_{t}^{\tau\wedge T} e^{\lambda s}|\delta Y_s|^2ds]+ \varepsilon\E[\Int_{t}^{\tau\wedge T} e^{\mu s}\|\Delta Z_s\|^2ds]\\
 &\leq C \E[\Int_{t}^{\tau\wedge T}e^{\mu s}|\Delta \bar{Y_s}|^2ds]+\alpha\E[\Int_{t}^{\tau\wedge T}e^{\mu s}\|\Delta \bar{Z_s}\|^2ds].
 \end{align*}
Next, choosing $\varepsilon$ such that $\lambda+2\mu-\Frac{K}{1-\varepsilon}= \Frac{\varepsilon C}{\alpha}$, we obtain
\begin{align*}
\varepsilon\big[&\Frac{C}{\alpha}\E[\Int_{t}^{\tau\wedge T} e^{\lambda s}|\Delta Y_s|^2ds] + \E[\Int_{t}^{\tau\wedge T} e^{\lambda s}\|\Delta Z_s\|^2ds]\big]\\
&\leq \alpha\Big[\Frac{C}{\alpha}\E[\Int_{t}^{\tau\wedge T} e^{\lambda s}|\Delta \bar{Y}_s|^2ds] + \E[\Int_{t}^{\tau\wedge T} e^{\lambda s}\|\Delta \bar{Z}_s\|^2ds]\big].
\end{align*}
Since $\Frac{\alpha}{\varepsilon} <1$, then $\Psi$ is a strict contraction on ${\mathcal H}^2_k([0,T]\times{\mathcal H}^2_{k\times d}([0,T])$ equipped with the norm
$$\|(Y,Z)\|^2= \Frac{C}{\alpha}\E[\Int_{0}^{\tau\wedge T} e^{\lambda s}|\Delta Y_s|^2ds] + \E[\Int_{0}^{\tau\wedge T} e^{\lambda s}\|\Delta Z_s\|^2ds].$$
Thus from Banach fixed point theorem there exists a unique pair $(Y,Z) \in  {\mathcal  H}^2_k([0,T])  \times {\mathcal H}^2_{k \times d}([0,T])$ solution of BDSDE associated to $ (\tau,\xi, f,g)$. Moreover, thanks to Assumption $({\bf{HL}})$ and standard calculations and estimates we show that $Y$ belongs to ${\mathcal S}^2_{k}([0,T]) $. \ep
\end{proof}

\section{Numerical scheme for Forward-Backward Doubly SDEs}
\label{Numeric scheme:section}
In this section, we are interested in developing a discrete-time approximation of a Forward-Backward Doubly SDE with finite stopping time horizon, namely the first exit time of a forward SDE from a cylindrical domain $D=[0,T)\times \Oc$. As usual, since we will state in the Markovian framework, we need a slight modification of the filtration.
So, we fix $t\in[0,T]$ and for each $s\in[t,T]$, we define
$$\Fc_s^t:=\Fc_{t,s}^B\vee\Fc_{s,T}^W\vee \Nc \quad \textrm{and}\quad \Gc_s^t:= \Fc_{t,s}^B\vee\Fc_{t,T}^{W}\vee \Nc,$$
where $\Fc_{t,s}^B=\sigma\{B_r-B_t, t\leq r\leq s\}$, $\Fc_{s,T}^W=\sigma\{W_r-W_s, s\leq r\leq T\}$ and $\Nc$ the class of $\P$ null sets of $\Fc$.
Note that the collection $\{\Fc_s^t, s\in[t,T]\}$ is neither increasing nor decreasing and it does not constitute a filtration. However,  $\{\Gc_s^t, s\in[t,T]\}$
is a filtration. We will omit the dependance of the filtration with respect to  the time $t$ if $t=0$.
\subsection{Formulation }
\label{FBDSDE:subsection}
For all $(t,x) \in [0,T]\times\mathbb{R}^{d}$, let $(X_{s}^{t,x})_{0\leq s\leq t}$ be the unique strong solution of the following  stochastic differential equation:
\begin{eqnarray}\label{forward}
dX_{s}^{t,x}=b(X_{s}^{t,x})ds+\sigma(X_{s}^{t,x})dB_{s},\quad s\in [t,T],\qquad X_{s}^{t,x}=x,\quad 0\leq s\leq t,
\end{eqnarray}
where $b$ and $\sigma$ are two functions on $\mathbb{R}^{d}$ with values respectively in $\mathbb{R}^{d}$ and $\mathbb{R}^{d\times d}$. We will omit the dependance of the forward process $X$ in the initial condition if it starts at time $t=0$.\\
Let $\tau^{t,x}$ be the first exit time of $(s,X_{s}^{t,x})$ from a cylindrical domain $D=[0,T)\times \Oc$ for some open bounded set $\Oc\subset\R^d$.\\
We now consider the following Markovian BDSDE with terminal random time $\tau$ associated to the data $(\Phi,f,g)$: For all $ t\leq s\leq T$,
\begin{equation}\label{BDSDE}
\left\{
\begin{array}{ll}
-dY_{s}^{t,x}&=\1_{\{s<\tau\}}f(s,X_{s}^{t,x},Y_{s}^{t,x},Z_{s}^{t,x})ds +\1_{\{s<\tau\}}g(s,X_{s}^{t,x},Y_{s}^{t,x},Z_{s}^{t,x})d\W_{s}
-Z_{s}^{t,x} dB_s,\\
\quad Y_{s}^{t,x}&=\Phi(\tau,X_{\tau}^{t,x}), \quad \tau\leq s\leq T,
\end{array}
\right.
\end{equation}
where $f$ and $\Phi$ are now two functions respectively on $[0,T]\times\mathbb{R}^{d}\times\mathbb{R}^k\times\mathbb{R}^{k\times d}$ and $\mathbb{R}^{d}$ with values in $\mathbb{R}^k$ and
 $g$ is a function on $[0,T]\times \mathbb{R}^{d}\times\mathbb{R}^k\times\mathbb{R}^ {k\times d}$ with values in $\mathbb{R}^{k\times l}$.\\[0.3cm]
 Now, we specify some conditions on the domain and the diffusion process:\\[0.3cm]
{\bf{Assumption (D)}}
$\Oc$ is an  open bounded set of $\R^d$ with a $C^2$-boundary.\\[0.3cm]
{\bf{Assumption (MHD)}} 
\begin{itemize}
\item[(i)] The matrix $a:=\sigma\sigma^{*}$ is elliptic, i.e.
there exists $\Lambda>0$ such that for all $x,\zeta\in\bar{\Oc}$ ,
\be\label{ellipticity}
\Lambda\|\zeta\|^2 \leq \zeta a(x)\zeta^{*}.
\ee
\item[(ii)]There exists a positive constant $L$ such that
\begin{eqnarray*}
 &&|b(x)-b(x')| + \|\sigma(x) - \sigma(x')\| \leq L|x-x'|,\,
\forall x, x' \in \mathbb{R}^{d}.
\end{eqnarray*}
\end{itemize}

\begin{Remark}
We mention that this smoothness assumption $({\bf{D}})$ on the domain could be weakened by considering the domain $\Oc$ as a finite intesection of smooth domains with compact boundaries and further conditions on the set of corners (see conditions $(\bf{D1})$ and $(\bf{D2})$ in \cite{BM09}). Under this weakened hypotheses, one may just assume the the matrix $a$ satisfies a non-characteristic boundary condition outside the set of corners $\Cc$ and a uniform ellipticity condition on a neighborhood of $\Cc$.
\end{Remark}
Besides, we assume that the terminal condition $\Phi$ is sufficiently smooth:\\[0.3cm]
{\bf{Assumption (MHT)}}
\b*
\Phi\in C^{1,2}([0,T]\times\R^d)\quad\text{and}\quad \|\partial_t\Phi\|+\|D\Phi\|+\|D^2\Phi\|\leq L\:\: \text{on}\: \:[0,T]\times\R^d.
\e*

We next state a strengthening of Assumption {\bf{(HL)}} in the present Markov framework:\\[0.2cm]
{\bf{Assumption ({MHL})}} There exist constants $ \alpha, \, \mu, \, \lambda,\, K>0,\, C>0,\,C^{\prime}>0,\, \displaystyle  \lambda > \frac{2K}{1 - \alpha} - 2 \mu + C$ and $ 0< \alpha < 1 $ such that
for any $(t_1,x_{1},y_{1},z_{1}),$ $(t_2,x_{2},y_{2},z_{2})\in [0,T]\times\mathbb{R}^{d}
\times\mathbb{R}^k\times \mathbb{R}^{k\times d} ,$
\begin{itemize}
\item[\rm{(i)}]
$|f(t_1,x_{1},y_{1},z_{1})-f(t_2,x_{2},y_{2},z_{2})|
\leq K \big(\sqrt{|t_{1}-t_{2}|}+|x_{1}-x_{2}|+|y_{1}-y_{2}|+\|z_{1}-z_{2} \|\big),$
\item[\rm{(ii)}]$\|g(t_1,x_{1},y_{1},z_{1})-g(t_2,x_{2},y_{2},z_{2})\|^{2}
\leq
C\big(|t_{1}-t_{2}|+|x_{1}-x_{2}|^{2}+|y_{1}-y_{2}|^{2} \big)+\alpha \|z_{1}-z_{2} \|^{2},$
\item[\rm{(iii)}] $ \left\langle y_1-y_2  \, , \, f(t_1,x_{1},y_{1},z_{1})\, - \,
f(t_1,x_{1},y_{2},z_{1}) \; \right\rangle \; \leq \, -\mu \, | y_1 -y_2|^2, $
\item[\rm{(iv)}]$\underset{0\leq t\leq T}{\Sup}(|f(t,0,0,0)|+\|g(t,0,0,0)\|)\leq C^{\prime}.$
\end{itemize}
\noindent
 \begin{Remark}
We note that the integrability condition given by Assumption {\bf{(HT)}} in Section \ref{BDSDE:section} is satisfied in this Markovian setting thanks to the smoothness of $\Phi$ (Assumption {\bf{(MHT)}}) and the fact that the exit time $\tau$, under the ellipticity condition \eqref{ellipticity} verified by the matrix $a$ (see Stroock and Varadhan \cite{SV72}), satisfies 
 $$\underset{(t,x)\in[0,T)\times\bar{\Oc}}{\Sup}\E[\exp(\lambda\tau^{t,x})]<\infty.$$
\end{Remark}
From \cite{pp1994} and \cite{K84}, the standard estimates for the solution of the Forward-Backward Doubly SDE (\ref{forward})-(\ref{BDSDE}) hold
and we remind the following theorem:
\begin{Theorem}
Under Assumptions {\bf{(MHT)}} and {\bf{(MHL)}}, there exist, for any $p\geq 2$, two positive constants $C$ and $C_p$ and an integer q such that~:
\begin{eqnarray}\label{integrability}
\E[\underset{t\leq s\leq \tau}{\Sup}|X_{s}^{t,x}|^{2}]\leq C(1+|x|^2),
\end{eqnarray}
\begin{equation}\label{apriori1}
\E\Big[\underset{t\leq s\leq \tau}{\Sup}|Y_{s}^{t,x}|^{p}+\Big(\Int_{t}^{\tau}\|Z_{s}^{t,x}\|^{2}ds \Big)^{p/2} \Big]
\leq C_p(1+|x|^{q}).
\end{equation}
\end{Theorem}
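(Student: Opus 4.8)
The plan is to prove the two estimates separately, reducing each to a classical computation adapted to the doubly stochastic framework. A preliminary observation that simplifies matters is that the domain $D=[0,T)\times\Oc$ is cylindrical, so $\tau^{t,x}\le T$ always; consequently the exponential-moment considerations of Section~\ref{BDSDE:section}, and the ellipticity/Stroock--Varadhan input quoted in the preceding Remark, play no role here and every bound below is a genuine finite-horizon estimate.

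For the forward bound \eqref{integrability}: Assumption \textbf{(MHD)}(ii) makes $b$ and $\sigma$ globally Lipschitz, hence of linear growth, $|b(x)|+\|\sigma(x)\|\le C(1+|x|)$. First I would apply It\^o's formula to $|X^{t,x}_s|^2$, take the supremum over $s\in[t,u]$, control the $dB$-martingale by the Burkholder--Davis--Gundy (BDG) inequality, and conclude by Gronwall's lemma that $\E[\Sup_{t\le s\le T}|X^{t,x}_s|^2]\le C(1+|x|^2)$; repeating the argument with $|X^{t,x}_s|^p$ gives $\E[\Sup_{t\le s\le T}|X^{t,x}_s|^p]\le C_p(1+|x|^p)$ for every $p\ge 2$. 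Restricting the supremum to $s\le\tau^{t,x}\le T$ yields \eqref{integrability}.

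For the backward bound \eqref{apriori1}: I would first record the linear-growth consequences of the hypotheses, namely $|\Phi(t,x)|\le C(1+|x|)$ from \textbf{(MHT)} and $|f(s,x,0,0)|+\|g(s,x,0,0)\|\le C(1+|x|)$ from \textbf{(MHL)}(i),(ii),(iv). Then, as in the uniqueness part of the previous theorem, I would apply the generalized It\^o formula (Lemma~1.3 of \cite{pp1994}) to $e^{\beta s}|Y^{t,x}_s|^2$ on $[s\wedge\tau,\tau]$: the backward It\^o correction of the $d\W$-term produces $+\Int e^{\beta r}\|g\|^2\,dr$, the monotonicity \textbf{(MHL)}(iii) together with the Lipschitz property \textbf{(MHL)}(i) bounds $\langle Y_r,f(r,X_r,Y_r,Z_r)\rangle$ by $-\mu|Y_r|^2$ plus terms linear in $(|X_r|,\|Z_r\|)$, and \textbf{(MHL)}(ii) bounds $\|g\|^2$ by $C(|X_r|^2+|Y_r|^2)+\alpha\|Z_r\|^2$ with $\alpha<1$. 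Choosing $\beta$ in the range dictated by $\lambda>\tfrac{2K}{1-\alpha}-2\mu+C$ and splitting the cross terms by Young's inequality, the coefficient of $\Int\|Z_r\|^2\,dr$ stays strictly favourable on the left; taking expectations (the $dB$- and $d\W$-integrals being true martingales after a standard localization, using that $(Y,Z)\in\Sc^2_k\times\Hc^2_{k\times d}$, which is already known) gives
\[
\E\big[|Y^{t,x}_s|^2\big]+\E\Big[\Int_{s\wedge\tau}^{\tau}\|Z^{t,x}_r\|^2\,dr\Big]\;\le\;C\,\E\Big[|\Phi(\tau,X^{t,x}_\tau)|^2+\Int_t^{\tau}\big(1+|X^{t,x}_r|^2\big)\,dr\Big]\;\le\;C(1+|x|^2),
\]
the last inequality by \eqref{integrability}. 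To upgrade to $p\ge 2$, apply It\^o's formula to $(\varepsilon+|Y_s|^2)^{p/2}$, let $\varepsilon\downarrow0$, and estimate the martingale term $\Int|Y_r|^{p-2}\langle Y_r,Z_r\,dB_r\rangle$ via BDG (bounding its bracket by $\Sup_r|Y_r|^{p}\Int|Y_r|^{p-2}\|Z_r\|^2\,dr$ and absorbing the $\Sup$ term) to get $\E[\Sup_{t\le s\le\tau}|Y^{t,x}_s|^p]\le C_p(1+|x|^p)$; for the $(\Int\|Z\|^2)^{p/2}$ contribution, return to the $|Y|^2$-It\^o identity, isolate $\Int_t^\tau\|Z_r\|^2\,dr$, raise to the power $p/2$, and control the martingale term by BDG in the form $\E[(\Int|Y_r|^2\|Z_r\|^2\,dr)^{p/4}]\le\eta\,\E[\Sup|Y_r|^p]+C_\eta\,\E[(\Int\|Z_r\|^2\,dr)^{p/2}]$, absorbing the last term; together with the bound just proved for $\E[\Sup|Y|^p]$ and with \eqref{integrability} this yields \eqref{apriori1} with $q=p$.

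I expect the main obstacle to be the bookkeeping of the two stochastic integrals — the backward $\W$-integral and the forward $B$-integral — under the non-filtration $\{\Fc^t_s\}$: one must check that the generalized It\^o formula of \cite{pp1994} applies and that the stochastic integrals are genuine martingales (localization plus the known $\Sc^2\times\Hc^2$ integrability), and one must tune the exponential weight $\beta$ and the Young weights so that the coefficient of $\|Z\|^2$ remains strictly favourable given only $\alpha<1$ and the constraint on $\lambda$. The $L^p$-upgrade for $\Int\|Z\|^2$ is then the routine BDG-absorption argument.
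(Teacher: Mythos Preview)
The paper does not supply its own proof of this theorem: it simply records the estimates as known, citing \cite{pp1994} and \cite{K84} in the sentence immediately preceding the statement. So there is no argument in the paper to compare against.

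Your sketch is correct and is precisely the standard computation underlying those references. The forward bound is the classical It\^o--BDG--Gronwall estimate for Lipschitz SDEs; you rightly invoke \textbf{(MHD)}(ii) for the linear growth of $b,\sigma$, an assumption the theorem statement should really have listed alongside \textbf{(MHT)} and \textbf{(MHL)}. The backward $L^p$ bound is the generalized It\^o/Young/absorption argument of Pardoux--Peng (exploiting $\alpha<1$ to keep the $\|Z\|^2$ coefficient favourable), followed by the routine upgrade to $p\ge2$ via It\^o on $|Y|^p$ and BDG; your handling of the $(\int\|Z\|^2)^{p/2}$ term by isolating it from the $|Y|^2$ identity and absorbing through BDG is the standard manoeuvre, and the conclusion $q=p$ is correct. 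The only delicate point---the two stochastic integrals under the non-monotone family $\{\Fc^t_s\}$---is exactly covered by Lemma~1.3 of \cite{pp1994}, as you note.
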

From now on, $C_L^{\eta}$ denotes a generic constant whose value may change from line to line, but which depends only on $X_0$, $L$, the constants appearing in Assumption {\bf{({MHL})}}  and some extra parameter $\eta$ (we simply write $C_L$ if it depends only on $X_0$ and $L$). Similarly, $\zeta_L^\eta$ denotes a generic non-negative random variable such that $\E[|\zeta_L^\eta|^p]\leq C_L^{\eta,p}$ for all $p\geq 1$ (we simply write $\zeta_L$ if it does not depend on the parameter $\eta$).
\subsection{Euler scheme approximation of Forward-BDSDEs}
\subsubsection{Forward Euler scheme}
In order to approximate the forward diffusion process (\ref{forward}), we use a standard Euler scheme with time step $h$, associated to a grid $$\pi:=\{t_i=ih\;;\; i\leq N\}, \qquad h:= T/N\;, \; N\in\N,$$
This approximation is defined by
\be\label{eulerforward}
X_t^N= x+\Int_0^t b(X_{\varphi(s)})ds +\Int_0^t \sigma(X_{\varphi(s)})dB_s , \qquad t\geq 0
\ee
where $\varphi(s):= \Sup\{t\in\pi : t\leq s\}$. Notice that $\varphi(t)=t_i$, for $t\in[t_i,t_{i+1})$ and the continuous approximation (\ref{eulerforward}) is equivalent to the following discrete approximation
\begin{equation}\label{euldiscrete}
\left\{
\begin{array}{ll}
X_0^N=x,\\
X_{t_{i+1}}^N= X_{t_{i}}^N+b(X_{t_{i}}^N)(t_{i+1}-t_{i})+\sigma(X_{t_{i}}^N)(B_{t_{i+1}}-B_{t_{i}}),\quad i\leq N.
\end{array}
\right.
\end{equation}
Then, we approximate the exit time $\tau$ by the first time of the Euler scheme $(t,X_t^N)_{t\in\pi}$ from $D$ on the grid $\pi$:
\b*
\bar{\tau}:=\Inf\{t\in\pi : X_t^N\notin\Oc\}\wedge T.
\e*
\begin{Remark}
One may approximate the exit time $\tau$ by its continuous version $\tilde{\tau}$ which is defined as the first exit time of the Euler scheme $(t,X_t^N)$, namely 
$$\tilde{\tau}:=\Inf\{t\in[0,T] : X_t^N\notin\Oc\}\wedge T.$$
However, this approximation requires more regularity on the boundary of $\Oc$ (see e.g. \cite{Gob98,Gob2000}).
\end{Remark}
The upper bound estimates for the error due to the approximation of $\tau$ by $\bar{\tau}$  was proved by Bouchard and Menozzi \cite{BM09} for the weak version of such estimate and Gobet \cite{Gob98,Gob2000} for the strong one. Recently, Bouchard, Geiss and Gobet \cite{BGG13} have improved the following  $L^1$-strong error:
\begin{Theorem}
Assume that {\bf{(MHD)}} and {\bf{(D)}} hold. Then, there exists $C_L>0$ such that
\be\label{proxytau:BGG}
\E [|\tau-\bar{\tau}|]\leq C_L h^{1/2}.
\ee
\end{Theorem}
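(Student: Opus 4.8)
The plan is to bound the two one‑sided errors separately, writing $|\tau-\bar\tau|=(\tau-\bar\tau)^{+}+(\bar\tau-\tau)^{+}$, and in each case to reduce the analysis to the \emph{normal component} of the diffusion near $\partial\Oc$. Under Assumption $({\bf D})$ the signed distance function $F$ to $\partial\Oc$ (positive inside $\Oc$, negative outside) is $C^{2}$ on a neighbourhood of $\partial\Oc$; extend it to a globally Lipschitz function on $\R^{d}$, $C^{2}$ with $\|DF\|+\|D^{2}F\|\leq C_{L}$ near the boundary. By It\^o's formula $F(X_{s}^{t,x})$ is, as long as $X^{t,x}$ stays near $\partial\Oc$, a one‑dimensional It\^o process with bounded drift and with diffusion coefficient $DF\,a\,DF^{*}\geq\Lambda\|DF\|^{2}$ by the ellipticity \eqref{ellipticity}; since $\|DF\|=1$ on $\partial\Oc$, this coefficient is bounded below by $\Lambda/2$ on a smaller neighbourhood, so the normal component is genuinely non‑degenerate near the boundary and does not ``stick'' to $0$. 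I will also use the standard strong error estimate for the Euler scheme, $\E[\Sup_{0\leq s\leq T}|X_{s}-X_{s}^{N}|^{p}]\leq C_{L}^{p}\,h^{p/2}$, and the exponential integrability $\Sup_{(t,x)}\E[\exp(\lambda\tau^{t,x})]<\infty$ recalled after Assumption $({\bf MHL})$.

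\textbf{The term $(\tau-\bar\tau)^{+}$ (Euler exits early).} On $\{\bar\tau<\tau\}$ one has $X^{N}_{\bar\tau}\notin\Oc$ while $X_{\bar\tau}\in\Oc$, so, writing $\delta:=F(X_{\bar\tau})>0$ for the distance of $X_{\bar\tau}$ to $\partial\Oc$, and using $F(X^{N}_{\bar\tau})\leq0$ together with the Lipschitz property of $F$, $\delta=F(X_{\bar\tau})-F(X^{N}_{\bar\tau})+F(X^{N}_{\bar\tau})\leq C_{L}|X_{\bar\tau}-X^{N}_{\bar\tau}|$. Conditionally on $\Gc_{\bar\tau}$, the further time $\tau-\bar\tau$ that the true diffusion needs to actually leave $\Oc$, started at distance $\delta$ from $\partial\Oc$, satisfies $\E[(\tau-\bar\tau)\wedge T\mid\Gc_{\bar\tau}]\leq C_{L}\delta$: comparing with the non‑degenerate normal component gives $\P[\text{not exited by time }u\mid\Gc_{\bar\tau}]\leq C_{L}\min(1,\delta/\sqrt u)$, and $\Int_{0}^{T}C_{L}\min(1,\delta/\sqrt u)\,du\leq C_{L}(\delta^{2}+\delta\sqrt T)\leq C_{L}\delta$. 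Taking expectations and invoking the strong error bound yields $\E[(\tau-\bar\tau)^{+}]\leq C_{L}\,\E[|X_{\bar\tau}-X^{N}_{\bar\tau}|]\leq C_{L}h^{1/2}$.

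\textbf{The term $(\bar\tau-\tau)^{+}$ (Euler lags).} Write $\E[(\bar\tau-\tau)^{+}]=\Int_{0}^{T}\P[\tau\leq s<\bar\tau]\,ds$. On $\{s<\bar\tau\}$ we have $X^{N}_{\varphi(s)}\in\Oc$, hence $F(X_{\varphi(s)})\geq-C_{L}|X_{\varphi(s)}-X^{N}_{\varphi(s)}|$, while on $\{\tau\leq s\}$ the true process has already met $\partial\Oc$. After discarding the negligible event that $X$ has re‑entered $\Oc$ far from its exit point by time $s$ (controlled by the exponential integrability of exit/return times under ellipticity), the leading contribution is the grid ``overshoot'' of the normal component: after $\tau$ the process $F(X)$ is immediately of order $\sqrt{s-\tau}$ in absolute value, so at the first grid point past $\tau$ it is only $O(\sqrt h)$ away from $0$; and from a position $v$ of order $\sqrt h$ the additional expected delay before some grid point records $F(X^{N})<0$ is again $O(v)=O(\sqrt h)$. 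A renewal/reflection‑type summation over the successive excursions of the normal component above $0$ on the grid — each step contributing a uniformly positive probability of the grid value being negative — closes the estimate with total expected delay $C_{L}h^{1/2}$. Adding the two one‑sided bounds gives $\E[|\tau-\bar\tau|]\leq C_{L}h^{1/2}$.

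\textbf{Main obstacle.} The hard part is the lag direction $(\bar\tau-\tau)^{+}$: one must control the discretisation overshoot of the non‑degenerate normal component near $\partial\Oc$ sharply enough to get the exponent $1/2$ rather than $1/2-\varepsilon$ (the latter being what a crude union bound over grid points in the spirit of Bouchard--Menozzi \cite{BM09} gives). This requires the delicate one‑dimensional reflection/renewal argument above and a careful use of the $C^{2}$‑regularity of $\partial\Oc$ from Assumption $({\bf D})$, so that $F$ is $C^{2}$ with bounded derivatives near the boundary and It\^o's formula applies with coefficients that are simultaneously bounded and uniformly elliptic there; the curvature of $\partial\Oc$ enters only through the bounded drift $\tfrac12\,\mathrm{tr}(a\,D^{2}F)+b\cdot DF$ of the normal component, which is harmless.
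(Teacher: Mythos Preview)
The paper does not actually prove this statement; it is quoted from Bouchard, Geiss and Gobet \cite{BGG13} without argument, so there is no in-paper proof to compare against. Your sketch is in the spirit of \cite{BGG13}: the decomposition into $(\tau-\bar\tau)^{+}$ and $(\bar\tau-\tau)^{+}$, the use of the signed distance $F$ together with the non-degeneracy of the normal component near $\partial\Oc$, and the identification of the lag term $(\bar\tau-\tau)^{+}$ as the delicate one are all correct, and your treatment of $(\tau-\bar\tau)^{+}$ is essentially the standard one.

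However, your treatment of $(\bar\tau-\tau)^{+}$ is not yet a proof. The sentence ``a renewal/reflection-type summation over the successive excursions of the normal component above $0$ on the grid --- each step contributing a uniformly positive probability of the grid value being negative --- closes the estimate'' hides precisely the technical work that separates the sharp $h^{1/2}$ rate of \cite{BGG13} from the $h^{1/2-\varepsilon}$ rate of \cite{Gob2000,BM09}. Two concrete gaps: (i) you slide between $F(X)$ and $F(X^{N})$ --- the overshoot estimate you invoke after $\tau$ is for the \emph{continuous} process $X$, while $\bar\tau$ is defined through the \emph{discrete} scheme $X^{N}$ on $\pi$, and the strong error $|X-X^{N}|=O_{L^{1}}(\sqrt h)$ must be woven into the renewal argument step by step, not applied once at $\tau_{+}$; (ii) the assertion that ``each step contributes a uniformly positive probability of the grid value being negative'' is only valid while the normal component remains within a strip of width $O(\sqrt h)$ around $\partial\Oc$, and you have not controlled the probability that it escapes this strip before the grid records an exit, nor summed the resulting sub-geometric contributions. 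In \cite{BGG13} these points are handled via a conditional moment inequality of the type $\E[(\bar\tau-\theta)_{+}\mid\Fc^{B}_{\theta}]\leq C_{L}\big(\sqrt h+F(X^{N}_{\theta})_{+}\big)$ for grid-valued stopping times $\theta$, proved by a backward induction along $\pi$ that tracks exactly the quantities you gesture at. Your outline needs that lemma, or an equivalent quantitative statement, to become a proof.
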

\begin{Remark}
Let us mention that the upper bound estimates for the error due to the approximation of $\tau$ by $\bar{\tau}$  proved by Bouchard and Menozzi \cite{BM09} for the weak version of such estimate  is as following: for any $\varepsilon\in(0,1)$ and each positive random variable $\zeta$ satisfying $\E[(\zeta_L)^p]\leq C_L^p$ for all $p\geq 1$, there exists $C_L^\varepsilon>0$ such that
\be\label{proxytau:faible}
\E\big[\E[\zeta_L|\tau-\bar{\tau}||\Fc^B_{\tau_{+}\wedge\bar{\tau}}]^2\big]\leq C_L^\varepsilon h^{1-\varepsilon},
\ee 
where $\tau_{+}$ is the next time after $\tau$ in the grid $\pi$ such that $\tau_+:=\Inf\{t\in \pi : \tau\leq t\}$.\\
For the strong estimate error, Gobet \cite{Gob98,Gob2000}  has proved that, for each $\varepsilon\in(0,1/2)$, there exists $C_L^\varepsilon>0$ such that 
\be\label{proxytau:fort}
\E[|\tau-\bar{\tau}|]\leq C_L^\varepsilon h^{1/2-\varepsilon}.
\ee
\end{Remark}
\subsubsection{Euler scheme for BDSDEs}
Regarding the approximation of (\ref{BDSDE}), we adapt the approach of \cite{matouetal13}. 
We define recursively (in a backward manner) the discrete-time process $(Y^N,Z^N)$ on the time grid $\pi$ by 
\begin{equation}\label{conterminale}
Y_{T}^{N} = \Phi(\bar{\tau},X_{\bar{\tau}}^{N}),
\end{equation}
and for $i=N-1,\ldots,0$, we set
\begin{equation}\label{Zn}
Z_{t_{i}}^{N} =h^{-1} \E_{t_{i}}\Bigg[(Y_{t_{i+1}}^{N}
+ g(t_{i+1},\Theta_{i+1}^{N})\Delta W_{i})\Delta B^{\top}_{i}\Bigg],
\end{equation}
\begin{equation}\label{Yn}
Y_{t_{i}}^{N} = \E_{t_{i}}[Y_{t_{i+1}}^{N}] + \1_{\{t_i<\bar{\tau}\}}h \E_{t_{i}}[f(t_{i},\Theta_{i}^{N})]
+\1_{\{t_i<\bar{\tau}\}}\E_{t_{i}}[g(t_{i+1},\Theta_{i+1}^{N})\Delta W_{i}],
\end{equation}

where
\begin{eqnarray*}
\Theta_{i}^{N}:=(X_{t_{i}}^{N},Y_{t_{i}}^{N},Z_{t_{i}}^{N})\;,\; \Delta W_{i}=W_{t_{i+1}}-W_{t_{i}}\; ,\;\Delta B_{i}=B_{t_{i+1}}-B_{t_{i}}.
\end{eqnarray*}
$\top$ denotes the transposition operator and
$\E_{t_{i}}$ denotes the conditional expectations over the $\sigma$-algebra $\Fc_{t_{i}}^0$. The above conditional expectations  are well defined at each step of the algorithm.\\
Observe that $Y_{t_{i}}^{N}\1_{\{t_i\geq\bar{\tau}\}}=\Phi(\bar{\tau},X_{\bar{\tau}}^{N})
\1_{\{t_i\geq\bar{\tau}\}}$ and $Z_{t_{i}}^{N}\1_{\{t_i\geq\bar{\tau}\}}=0$. One can easily check that $$Y_{t_{i+1}}^{N}
+ g(t_{i+1},\Theta_{i+1}^{N})\Delta W_{i}\in L^2(\Fc_{t_{i+1}})$$ for all $0\leq i<N$ under the Lipschitz continuous assumption. Then an obvious extension of It\^o martingale representation theorem yields the existence of the $\Gc_t$-progressively measurable and square integrable process $Z^N$ satisfying, for all $i<N$
\b*
Y_{t_{i+1}}^{N} + g(t_{i+1},\Theta_{i+1}^{N})\Delta W_{i}= \E_{t_{i}}[Y_{t_{i+1}}^{N} + g(t_{i+1},\Theta_{i+1}^{N})\Delta W_{i}]+\Int_{t_i}^{t_{i+1}}Z^N_s dB_s .
\e*
Following the arguments of Pardoux and Peng \cite{pp1994} (see page 213), we can prove that in fact $Z^N$ is $\Fc_t$-progressively measurable thanks to the independance of the increments of $B$ and the two Browian motions $B$ and $W$.\\
This allows us to consider a continuous-time extension of $Y^N$ in $\Sc^2$ defined on $[0,T]$ by 
\be \label{Yapprox}
Y_t^N &=&  \Phi(\bar{\tau},X_{\bar{\tau}}^{N})+\Int_t^T \1_{\{s<\bar{\tau}\}} f(\varphi(s),\Theta_{\varphi(s)}^{N})ds+\Int_t^T \1_{\{s<\bar{\tau}\}} g(\psi(s),\Theta_{\psi(s)}^{N})d\W_s-\Int_t^T Z_s^N dB_s,\nonumber\\
& &
\ee
where $\psi(s):= \Inf\{t\in\pi : t\geq s\}$.
\begin{Remark}\label{remZ}
Observe that $Z_s=0$ on $]\tau,T]$ and $Z_s^N=0$ on $]\bar{\tau},T]$. For later use, note also that 
\be \label{Zapprox}
Z_{t_i}^{N}= h^{-1}\E_{t_i}[\Int_{t_i}^{t_{i+1}}Z_s^N ds ]\: , \quad i<N.
\ee
\end{Remark}

In order to prove (\ref{errtau}) of Proposition \ref{properr2}, we need the following lemma.
\begin{Lemma}\label{lemmcontrol}
Let Assumptions {\bf{(MHL)}} and {\bf{(MHT)}} hold. Then,
\be 
\underset{i<N}{\max} (|Y_{t_i}^N|+\sqrt{h}\| Z_{t_i}^N\|)\leq \zeta_L \quad\text{and}\quad \|Y^N\|_{\Sc^2}+ \|Z^N_{\varphi}\|_{\Hc^2}+ \|Z^N_{\psi}\|_{\Hc^2}\leq C_L.
\ee
\end{Lemma}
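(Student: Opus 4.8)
The plan is to prove the two assertions in turn, working backward from the terminal values, and then globalize. First I would establish the \emph{pointwise bound} $\max_{i<N}(|Y_{t_i}^N| + \sqrt h\,\|Z_{t_i}^N\|)\le \zeta_L$ by a backward induction on the grid. At the terminal index one has $|Y_T^N| = |\Phi(\bar\tau,X_{\bar\tau}^N)|\le L(1+\sup_{s\le\bar\tau}|X_s^N|)$, and since the Euler scheme satisfies $\E[\sup_{t\le T}|X_t^N|^p]\le C_L^p(1+|x|^p)$ for every $p$, this terminal value is dominated by a random variable of type $\zeta_L$. For the inductive step I would use the defining relations \eqref{Zn}--\eqref{Yn}: taking $\Fc_{t_i}^0$-conditional expectations, the Lipschitz bound (MHL)(i)--(ii) and the sublinear growth of $f,g$ in $(x,y,z)$ give
\be
|Y_{t_i}^N| \le \E_{t_i}[|Y_{t_{i+1}}^N|] + Ch(1 + |X_{t_i}^N| + |Y_{t_i}^N| + \|Z_{t_i}^N\|) + \E_{t_i}[|g(t_{i+1},\Theta_{i+1}^N)\Delta W_i|],
\nonumber
\ee
while for $Z_{t_i}^N$ one uses $|\Delta B_i|^2$ has conditional expectation $dh$ and the Cauchy--Schwarz inequality on \eqref{Zn} to get $\sqrt h\,\|Z_{t_i}^N\|\le C\,\E_{t_i}[(|Y_{t_{i+1}}^N| + |g(t_{i+1},\Theta_{i+1}^N)|\sqrt h)^2]^{1/2}\cdot h^{-1/2}\cdot\sqrt h$, so that $\sqrt h\,\|Z_{t_i}^N\| \le C\,\E_{t_i}[(|Y_{t_{i+1}}^N|+\sqrt h(1+|X_{t_i}^N|+|Y_{t_i}^N|))^2]^{1/2}$. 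Substituting the $Z$-estimate into the $Y$-estimate and absorbing the $Ch|Y_{t_i}^N|$ term on the left for $h$ small enough yields a recursion of the form $u_i \le (1+Ch)\E_{t_i}[u_{i+1}] + Ch\,\zeta_L$ for the quantity $u_i := |Y_{t_i}^N| + \sqrt h\,\|Z_{t_i}^N\|$; iterating gives $u_i \le e^{CT}\E_{t_i}[\zeta_L + \text{(sum of }\zeta_L\text{-terms)}]$, and the resulting conditional expectation of a $\zeta_L$-variable is again a $\zeta_L$-variable by Doob's $L^p$-inequality. A mild technical point is handling the indicator $\1_{\{t_i<\bar\tau\}}$, but since it only deletes terms it causes no difficulty; on $\{t_i\ge\bar\tau\}$ one simply has $Y_{t_i}^N = \Phi(\bar\tau,X_{\bar\tau}^N)$ and $Z_{t_i}^N = 0$.

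Next I would deduce the $\Sc^2$ and $\Hc^2$ bounds for the continuous-time extension. For $\|Y^N\|_{\Sc^2}$ I would apply It\^o's formula to $e^{\lambda t}|Y_t^N|^2$ using representation \eqref{Yapprox}, take expectations, use the monotonicity condition (MHL)(iii) together with Lipschitz continuity to dominate the drift term, use the structure condition (MHL)(ii) on $g$ to absorb part of the $\|Z^N\|^2$ term (this is exactly the role of the constraint $\lambda > \tfrac{2K}{1-\alpha} - 2\mu + C$), and then apply the Burkholder--Davis--Gundy inequality to the two stochastic integrals to pass from an estimate on $\E[\sup_t e^{\lambda t}|Y_t^N|^2]$ to one controlled by $\E[|\Phi(\bar\tau,X_{\bar\tau}^N)|^2]$ plus $\E\int_0^{\bar\tau}e^{\lambda s}(1+|X_{\varphi(s)}^N|^2)\,ds$, both of which are $\le C_L$ by the forward moment bound \eqref{integrability}-type estimate for the Euler scheme and the bound $\sup_{(t,x)}\E[e^{\lambda\tau^{t,x}}]<\infty$. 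The same It\^o computation produces $\|Z^N\|_{\Hc^2}\le C_L$ for the continuous-time $Z^N$; then $\|Z^N_\varphi\|_{\Hc^2}$ and $\|Z^N_\psi\|_{\Hc^2}$ are controlled via \eqref{Zapprox} and its $\psi$-analogue, using the conditional Jensen inequality $\E_{t_i}[\int_{t_i}^{t_{i+1}}\|Z_s^N\|^2\,ds]\ge h\|Z_{t_i}^N\|^2$ to get $\E\int_0^T\|Z_{\varphi(s)}^N\|^2\,ds\le \E\int_0^T\|Z_s^N\|^2\,ds$.

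The \textbf{main obstacle} I anticipate is the interplay between the backward It\^o integral $d\W_s$ and the forward filtration structure in the It\^o-formula step: one must invoke the generalized It\^o formula of Pardoux--Peng (Lemma 1.3 in \cite{pp1994}), being careful that the $d\W$-term contributes an extra $+\int \|g\|^2\,ds$ (as already seen in \eqref{unicite}), which is what forces the condition on $\lambda$ and must be tracked precisely so that the coefficient $\lambda + 2\mu - \tfrac{2K}{1-\alpha} - C$ of $\int e^{\lambda s}|Y_s^N|^2\,ds$ stays strictly positive and the coefficient of $\int e^{\lambda s}\|Z_s^N\|^2\,ds$ stays bounded below by a positive constant after the $g$-term is absorbed. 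A secondary subtlety is that the ``time'' arguments in the discretized coefficients are $\varphi(s)$ in $f$ but $\psi(s)$ in $g$; since $|\varphi(s)-s|\le h$ and $|\psi(s)-s|\le h$ this only produces lower-order $\sqrt h$ or $h$ terms that are harmlessly bounded, but the bookkeeping must be done. Everything else is routine Gronwall-type estimation combined with the already-cited a priori bounds.
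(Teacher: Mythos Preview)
The paper does not actually supply a proof of this lemma: it is stated and then used, the implicit justification being that it is the BDSDE-with-exit-time analogue of the standard discrete a~priori estimate in Bouchard--Menozzi \cite{BM09} (their Lemma~3.1) combined with the BDSDE techniques of \cite{matouetal13}. So there is no ``paper's proof'' to compare against; I can only assess whether your argument is sound.

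Your overall two-part strategy (pointwise backward recursion, then energy/It\^o estimate for the $\Sc^2\times\Hc^2$ bounds) is exactly the expected one. The second part is fine and is precisely how \cite{matouetal13} proceeds. The first part, however, has a genuine gap that is specific to the \emph{doubly} stochastic setting and does not arise in \cite{BM09}.

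The issue is the measurability of $\Delta W_i$. In this paper $\E_{t_i}$ is conditioning on $\Fc_{t_i}^0=\Fc_{0,t_i}^B\vee\Fc_{t_i,T}^W$, and $\Delta W_i=W_{t_{i+1}}-W_{t_i}$ belongs to $\Fc_{t_i,t_{i+1}}^W\subset\Fc_{t_i,T}^W\subset\Fc_{t_i}^0$. Hence $\Delta W_i$ comes \emph{out} of $\E_{t_i}[\,\cdot\,]$, and your Cauchy--Schwarz step
\[
\E_{t_i}\big[|g(t_{i+1},\Theta_{i+1}^N)\Delta W_i|^2\big]\;\approx\;h\,\E_{t_i}\big[|g|^2\big]
\]
is wrong: the left-hand side equals $|\Delta W_i|^2\,\E_{t_i}[|g|^2]$. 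Consequently both the $Y$-recursion (through $\E_{t_i}[g\,\Delta W_i]=\Delta W_i\,\E_{t_i}[g]$) and the $Z$-recursion pick up a pointwise factor $|\Delta W_i|$ rather than $\sqrt h$. Your $L^1$-type recursion $u_i\le(1+Ch)\E_{t_i}[u_{i+1}]+Ch\,\zeta_L$ therefore does not close: when iterated, the accumulated product $\prod_j(1+C|\Delta W_j|)$ does not have $N$-uniform $L^p$ moments. A related symptom is your appeal to Doob's maximal inequality: $(\Fc_{t_i}^0)_{i}$ is \emph{not} a filtration, so Doob does not apply directly to $\max_i\E_{t_i}[\zeta_L]$.

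The cleanest repair is to reverse the order of your two parts. First run the generalized It\^o/energy estimate on the continuous extension \eqref{Yapprox} for $e^{\lambda t}|Y_t^N|^p$ (all $p\ge2$), using the Pardoux--Peng lemma and BDG exactly as you outline; here the backward-integral term contributes in expectation (the $+\int\|g\|^2\,ds$ term) and the $\Delta W_i$ pathology disappears. This yields $\E[\sup_t|Y_t^N|^p+(\int_0^T\|Z_s^N\|^2ds)^{p/2}]\le C_L^p$, which already gives $\|Y^N\|_{\Sc^2}+\|Z^N\|_{\Hc^2}\le C_L$ and, via \eqref{Zapprox} and Jensen as you note, the bounds on $\|Z^N_\varphi\|_{\Hc^2}$ and $\|Z^N_\psi\|_{\Hc^2}$. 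The pointwise bound $\max_i|Y_{t_i}^N|\le\zeta_L$ then follows immediately by taking $\zeta_L:=\sup_t|Y_t^N|$. For $\sqrt h\,\|Z_{t_i}^N\|$, use $h\|Z_{t_i}^N\|^2\le\E_{t_i}[\int_{t_i}^{t_{i+1}}\|Z_s^N\|^2ds]\le\E_{t_i}[\int_0^T\|Z_s^N\|^2ds]$; to control $\max_i\E_{t_i}[\xi]$ for $\xi:=\int_0^T\|Z_s^N\|^2ds$, observe that $\E_{t_i}[\xi]\le\E[\xi\,|\,\Gc_{t_i}^0]$ where $\Gc_{t_i}^0=\Fc_{0,t_i}^B\vee\Fc_{0,T}^W$ \emph{is} an increasing filtration, so Doob's $L^p$ inequality applies to the $\Gc^0$-martingale $i\mapsto\E[\xi\,|\,\Gc_{t_i}^0]$ and gives $\E[(\max_i\E_{t_i}[\xi])^p]\le C_p\,\E[\xi^p]\le C_L^p$.
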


\subsubsection{Upper bounds for the discrete-time approximation error }
In this section, we provide bounds for the (square of the) discrete-time approximation error up to a stopping time $\theta\leq T$ $\P$-a.s. defined  as 
\be
\text{Err}(h)_{\theta}^2:=\underset{i<N}{\max}\,\E\big[\underset{t\in[t_i,t_{i+1}]}{\Sup}\1_{\{t<\theta\}} |Y_t-Y_t^N|^2\big] + \E\big[\Int _0^\theta\|Z_t-Z_{\varphi(t)}^N\|^2 dt\big],
\ee
where we recall $\varphi(s):= \Sup\{t\in\pi : t\leq s\}$.\\

We first recall some standard controls on $X$, $(Y,Z)$ and $X^N$.
\begin{Proposition}\label{control}
 Let Assumptions {\bf{(MHL)}}, {\bf{(MHT)}} and {\bf{(MHD)}} hold. Fix $p\geq 2$. Let $\vartheta$ be a stopping time with values in $[0,T]$. Then,
$$\E\Big[\underset{t\in[\vartheta,T]}{\Sup}|Y_t|^p +\big(\Int_\vartheta^T \|Z_t\|^2dt\big)^{p/2}\Big]\leq C_L^p(1+|X_\vartheta|^p),$$
and
$$\E\Big[\underset{t\in[\vartheta,T]}{\Sup}(|X_t|^p +|X_t^N|^p)|\Fc^B_{0,\vartheta}\Big]\leq \zeta_L^p.$$
Moreover,
$$\underset{i<N}{\max}\,\E\big[\underset{t\in[t_i,t_{i+1}]}{\Sup}(|X_t-X_{t_i}|^p+|X_t^N-X_{t_i}^N|^p)\big]+\E\big[\underset{t\in[0,T]}{\Sup}(|X_t-X_{t}^N|^p)\big]\leq C_L^ph^{p/2},$$
$$\P\big[\underset{t\in[0,T]}{\Sup}(|X_t^N-X_{\varphi(t)}^N|>r\big]\leq C_{L }r^{-4} h, \quad r>0,$$
and, if $\theta$ is a stopping time with values in $[0,T]$ such that $\vartheta\leq \theta\leq \vartheta+h$ $\P$-a.s., then
$$\E\big[|X_{\theta}^N-X_{\vartheta}^N|^p + |X_{\theta}-X_{\vartheta}|^p|\Fc^B_{0,\vartheta}\big]\leq \zeta_L^p h^{p/2}.$$
\end{Proposition}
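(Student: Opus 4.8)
The plan is to treat Proposition \ref{control} as a bundle of several standard controls and to prove each one essentially independently, the only recurring subtlety being how one conditions on the Brownian past. Throughout I would use the a priori estimate \eqref{apriori1} and the linear growth \eqref{integrability}, the Lipschitz/monotonicity structure of Assumptions \textbf{(MHL)} and \textbf{(MHD)}, the smoothness of $\Phi$ from \textbf{(MHT)}, and the exponential integrability $\sup_{(t,x)}\E[\exp(\lambda\tau^{t,x})]<\infty$ recalled just before the statement.

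For the bound on $(Y,Z)$, I would first condition on $\Fc^B_{0,\vartheta}$. Since the forward equation \eqref{forward} is driven by $B$ alone and $(\Fc^B_{0,s})_{s}$ is a genuine filtration, the flow property gives $X_s=X^{\vartheta,X_\vartheta}_s$ for $s\geq\vartheta$, so that $(Y_s,Z_s)_{s\in[\vartheta,T]}$ is, under the regular conditional probability given $\Fc^B_{0,\vartheta}$, the solution of the BDSDE \eqref{BDSDE} started at time $\vartheta$ from the then-frozen point $X_\vartheta$; applying \eqref{apriori1} pathwise in that conditional probability and integrating gives the first inequality. Alternatively, for a self-contained derivation I would apply It\^o's formula to $e^{\lambda s}|Y_s|^2$ on $[\vartheta,\tau]$, absorb $f$ via the monotonicity \textbf{(MHL)}(iii) together with its Lipschitz part, absorb the $z$-dependence of $g$ through the constant $\alpha<1$ of \textbf{(MHL)}(ii), dominate the terminal term $\Phi(\tau,X_\tau)$ by $C(1+\sup_{[0,T]}|X_t|)$ using \textbf{(MHT)}, and close with Burkholder--Davis--Gundy, Gronwall, \eqref{integrability} and the exponential moment of $\tau$.

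The forward controls are routine. Conditioning on $\Fc^B_{0,\vartheta}$ again, the increments of $B$ after $\vartheta$ are independent of that $\sigma$-field, so $X$ and the Euler scheme $X^N$ both restart from $X_\vartheta$ and $X^N_\vartheta$; the classical linear-growth estimate for Lipschitz SDEs and its uniform-in-$N$ Euler counterpart (Burkholder--Davis--Gundy plus discrete and continuous Gronwall) give the second displayed bound, which is of the announced form $\zeta_L^p$ since $X_0=x$ is fixed. For one mesh interval, $X_t-X_{t_i}=\int_{t_i}^t b(X_s)\,ds+\int_{t_i}^t\sigma(X_s)\,dB_s$, so Burkholder--Davis--Gundy and that moment bound yield $\E[\sup_{[t_i,t_{i+1}]}|X_t-X_{t_i}|^p]\leq C_L^p h^{p/2}$, and identically for $X^N$ with frozen coefficients. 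For the global error I would write $X_t-X^N_t=\int_0^t(b(X_s)-b(X^N_{\varphi(s)}))\,ds+\int_0^t(\sigma(X_s)-\sigma(X^N_{\varphi(s)}))\,dB_s$, split each integrand into $(b(X_s)-b(X^N_s))+(b(X^N_s)-b(X^N_{\varphi(s)}))$, and use the Lipschitz property, the one-step bound for $X^N$, Burkholder--Davis--Gundy and Gronwall. The probability estimate then follows from Markov's inequality at order four and $\E[\sup_{[0,T]}|X^N_t-X^N_{\varphi(t)}|^4]\leq\sum_{i<N}\E[\sup_{[t_i,t_{i+1}]}|X^N_t-X^N_{t_i}|^4]\leq N\,C_L h^2=C_L T h$. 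For the last display, with $\vartheta\leq\theta\leq\vartheta+h$, I would split $X^N_\theta-X^N_\vartheta$ through the nearest grid points (the intermediate grid-to-grid increment spanning at most two mesh steps), bound the three pieces by one-step increments, and condition on $\Fc^B_{0,\vartheta}$ to make the relevant $B$-increments independent; the same, more simply, for the continuous process $X$.

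The only genuinely delicate point is the conditioning used in the first, second and last displays: because $(\Fc_s)$ is not a filtration, one must condition only on the Brownian sub-$\sigma$-field $\Fc^B_{0,\vartheta}$ --- on which the forward dynamics, the Euler scheme and the exit time $\tau$ all depend --- and invoke the flow property and \eqref{apriori1} pathwise in the regular conditional probability, the backward noise $W$ playing no role in any of these forward controls. Checking that this conditioning is legitimate (measurability of the regular conditional probability, the restart/flow identity for the BDSDE) is the step I would write out with real care; everything else is a routine combination of Burkholder--Davis--Gundy, (discrete) Gronwall and the exponential integrability of $\tau$.
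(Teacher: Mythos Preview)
Your approach is sound, but note that the paper does not actually prove Proposition~\ref{control}: it is introduced with the phrase ``We first recall some standard controls on $X$, $(Y,Z)$ and $X^N$'' and stated without proof. These estimates are indeed classical --- the forward moment bounds, one-step increments, strong Euler error and the associated tail estimate are textbook results for Lipschitz SDEs (see e.g.\ the references \cite{BM09,BT04} in the paper), and the a~priori bound on $(Y,Z)$ is the content of \eqref{apriori1} combined with the flow property.

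Your sketch correctly identifies the ingredients one would use if writing these out in full, and the one genuinely non-routine point you flag --- that conditioning must be on $\Fc^B_{0,\vartheta}$ rather than on the non-filtration $(\Fc_s)$, since the forward dynamics and the Euler scheme are driven by $B$ alone --- is exactly the right observation. One minor remark: in the probability bound you bound $\E[\sup_{[0,T]}|X^N_t-X^N_{\varphi(t)}|^4]$ by the sum of the per-interval fourth moments via $\max_i a_i\leq\sum_i a_i$; this is correct but a little crude, and the same conclusion follows more directly from a union bound $\P[\sup_{[0,T]}>r]\leq\sum_i\P[\sup_{[t_i,t_{i+1}]}>r]$ together with Markov at order four on each interval.
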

\begin{Remark}
Let $\vartheta\leq\theta$ $\P$-a.s. be two stopping times with values in $\pi$ and $\bar{Z}_{t_i}$ be the best approximation of $(Z_t)_{t_i\leq t\leq t_{i+1}}$ by $\Fc_{{t_i}}$-measurable random variable in the following sense
\be\label{approxZ}
\bar{Z}_{t_i}:= h^{-1}\E_{t_i}[\Int_{t_i}^{t_{i+1}}Z_s ds ]\: , \quad i<N.
\ee
Then, recalling that $t_{i+1}-t_{i}=h$, it follows from (\ref{approxZ}),(\ref{Zapprox}) and Jensen's inequality that 
\begin{align}\label{Zest}
\begin{split}
\E\Big[\Int_\vartheta^\theta \|\bar{Z}_{\varphi(s)}-Z_{\varphi(s)}^N\|^2ds\Big]&=\sum_{i<N} \E\Big[\Int_{t_i}^{t_{i+1}}\1_{\{\vartheta\leq t_i\leq \theta\}} 
\big\|\E_{t_i}\big[h^{-1}\Int_{t_i}^{t_{i+1}}(Z_{u}-Z_u^N) du\big]\big\|^2ds
\Big]\\
&\leq \sum_{i<N} \E\Big[\Int_{t_i}^{t_{i+1}}\1_{\{\vartheta\leq t_i\leq \theta\}} h^{-1}\Int_{t_i}^{t_{i+1}}\|Z_u-Z_u^N\|^2 du\:ds\Big]\\
&\leq \E\Big[\Int_\vartheta^\theta \|Z_s-Z_{s}^N\|^2ds\Big]
\end{split}
\end{align}
Observe that the above inequality does not apply if $\vartheta$ and $\theta$ do not take values in $\pi$. This explains why it is easier to work with $\tau_+$,  instead of $\tau$, that is, work on $\text{Err}(h)_{\tau_+\wedge\bar{\tau}}^2$ instead of $\text{Err}(h)_{\tau\wedge\bar{\tau}}^2$.
\end{Remark}
Now we state an upper bound result for some stopping time $\theta$ with values in $\pi$.
\begin{Theorem}\label{errtheta}
Assume that Assumptions \textbf{(MHL)}, \textbf{(MHD)} and \textbf{(MHT)}   hold, and define 
$$\Rc(Y)_{\Sc^2}^{\pi}:=\underset{i<N}{\max}\, \E\big[\underset{t\in[t_i,t_{i+1}]}{\Sup}|Y_t-Y_{t_i}|^2\big]\quad ,\quad \Rc(Z)_{\Hc^2}^{\pi}:=\E\big[\Int _0^T\|Z_t-\bar{Z}_{\varphi(t)}\|^2 dt\big]$$

Then for all stopping times $\theta$ with values in $\pi$, we have
\begin{align}
\text{Err}(h)_{\theta}^2\leq C_L &\bigg(h +\E[|Y_{\theta}-Y_{\theta}^N|^2]+\Rc(Y)_{\Sc^2}^{\pi}+\Rc(Z)_{\Hc^2}^{\pi}\nonumber\\
&+\E\big[\Int _0^T\|Z_t-\bar{Z}_{\psi(t)}\|^2 dt\big]+\E\big[\Int_{\bar{\tau}\wedge\tau\wedge\theta}^{(\bar{\tau}\vee\tau)\wedge\theta}(\zeta_L+\1_{\{\bar{\tau}<\tau\}}\|Z_s\|^2)ds\big]\bigg).
\end{align}
\end{Theorem}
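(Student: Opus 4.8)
\emph{Proof plan.} The argument follows the route used for discrete-time schemes of BSDEs (see \cite{matouetal13, BM09} for the doubly stochastic and random-horizon ingredients), adapted to the mismatch between $\tau$ and $\bar\tau$. Write $\delta Y:=Y-Y^N$ and $\delta Z:=Z-Z^N$, with $(Y^N,Z^N)$ the continuous-time extension \eqref{Yapprox}. Both pairs solve BDSDEs on $[0,T]$: $(Y,Z)$ with terminal value $\Phi(\tau,X_\tau)$ and generator terms $\1_{\{s<\tau\}}f(s,\Theta_s)$, $\1_{\{s<\tau\}}g(s,\Theta_s)$, while $(Y^N,Z^N)$ has terminal value $\Phi(\bar\tau,X^N_{\bar\tau})$ and generator terms $\1_{\{s<\bar\tau\}}f(\varphi(s),\Theta^N_{\varphi(s)})$, $\1_{\{s<\bar\tau\}}g(\psi(s),\Theta^N_{\psi(s)})$. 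Fix $t\in\pi$ and a weight $\beta$ chosen in the spirit of $\lambda>\frac{2K}{1-\alpha}-2\mu+C$ (the exponential weight being harmless on the bounded interval $[0,T]$); I would apply the generalized It\^o formula of Pardoux--Peng (Lemma 1.3 of \cite{pp1994}) to $e^{\beta s}|\delta Y_s|^2$ on $[t\wedge\theta,\theta]$ and take expectations. The $dB_s$- and $d\W_s$-stochastic integrals vanish in expectation, leaving on the right $\E[e^{\beta\theta}|\delta Y_\theta|^2]$, the cross-term $2\E[\Int_{t\wedge\theta}^{\theta}e^{\beta s}\langle\delta Y_s,\Delta f_s\rangle ds]$ and the quadratic-variation term $\E[\Int_{t\wedge\theta}^{\theta}e^{\beta s}\|\Delta g_s\|^2 ds]$, against the good left-hand side $\E[e^{\beta(t\wedge\theta)}|\delta Y_{t\wedge\theta}|^2]+\E[\Int_{t\wedge\theta}^{\theta}e^{\beta s}(\beta|\delta Y_s|^2+\|\delta Z_s\|^2) ds]$, where $\Delta f_s$ and $\Delta g_s$ denote the differences of the two generator terms.

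\emph{Matched regime $\{s<\tau\wedge\bar\tau\}$.} There both indicators equal $1$, and I would split $\Delta f_s$ (resp.\ $\Delta g_s$) into a part comparing the true coefficient at time $s$ with the Euler coefficient at the same arguments, and a part measuring the freezing of the arguments over $[\varphi(s),s]$ (resp.\ the right-endpoint shift over $[s,\psi(s)]$). Using the monotonicity {\bf{(MHL)}}(iii) in $y$, the Lipschitz bounds {\bf{(MHL)}}(i)--(ii) in $(t,x,z)$, Young's inequality, and crucially $\alpha<1$ together with the $\lambda$-condition to keep the coefficients of $\Int|\delta Y_s|^2 ds$ and $\Int\|\delta Z_s\|^2 ds$ strictly positive after absorption, this regime contributes, up to the constant $C_L$: the term $h$ (from the $\sqrt{|t_1-t_2|}$ and $|t_1-t_2|$ moduli and from $\E[\Sup_{t\leq T}|X_t-X_t^N|^2]\leq C_L h$ in Proposition \ref{control}), the path-regularity terms $\Rc(Y)_{\Sc^2}^{\pi}$ and $\Rc(Z)_{\Hc^2}^{\pi}$, and the extra term $\E[\Int_0^T\|Z_t-\bar Z_{\psi(t)}\|^2 dt]$ produced by the right-endpoint evaluation $\psi(s)$ in the $\W$-integral of \eqref{Yapprox}. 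The various indexations $Z^N_s$, $Z^N_{\varphi(s)}$, $Z^N_{\psi(s)}$, $\bar Z_{\varphi(s)}$ are reconciled through the $L^2$-projection (Jensen) estimate \eqref{Zest}, which gives $\E[\Int\|\bar Z_{\varphi(s)}-Z^N_{\varphi(s)}\|^2 ds]\leq\E[\Int\|\delta Z_s\|^2 ds]$; this is exactly what lets one pass from the left-hand quantity $\E[\Int_0^\theta\|\delta Z_s\|^2 ds]$ to the $\E[\Int_0^\theta\|Z_t-Z^N_{\varphi(t)}\|^2 dt]$ appearing in $\text{Err}(h)_\theta^2$, at the cost of $\Rc(Z)_{\Hc^2}^{\pi}$.

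\emph{Mismatch regime $\{\tau\wedge\bar\tau\leq s<\tau\vee\bar\tau\}$ and conclusion.} There only one of the two generators is active. On $\{\bar\tau<\tau\}$ one has $Z^N_s=0$ and $Y^N_s=\Phi(\bar\tau,X^N_{\bar\tau})$, so $\|\delta Z_s\|^2=\|Z_s\|^2$ stays harmlessly on the left, while the active true coefficient is bounded --- via linear growth {\bf{(MHL)}}(iv), Lemma \ref{lemmcontrol}, Proposition \ref{control} and Young's inequality --- by $\zeta_L+\|Z_s\|^2$, which is the source of the term $\1_{\{\bar\tau<\tau\}}\|Z_s\|^2$ in the statement. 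On $\{\tau<\bar\tau\}$ one has $Z_s=0$ and $Y_s=\Phi(\tau,X_\tau)$, the active Euler coefficients $\|\Delta f_s\|$, $\|\Delta g_s\|$ are controlled using Lemma \ref{lemmcontrol} and the a priori bounds, and their residual $Z^N$-contributions are absorbed by the left-hand term $\Int\|\delta Z_s\|^2 ds=\Int\|Z^N_s\|^2 ds$ on this interval; collecting both sub-cases gives the term $\E[\Int_{\bar\tau\wedge\tau\wedge\theta}^{(\bar\tau\vee\tau)\wedge\theta}(\zeta_L+\1_{\{\bar\tau<\tau\}}\|Z_s\|^2) ds]$. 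Finally I would run a discrete Gronwall argument over the grid $\pi$ to control first $\Max_{t_i\in\pi}\E[|\delta Y_{t_i\wedge\theta}|^2]$ and $\E[\Int_0^\theta\|\delta Z_s\|^2 ds]$, then re-apply the estimate on each single step $[t_i,t_{i+1}\wedge\theta]$ together with the Burkholder--Davis--Gundy inequality to upgrade to the $\Sup_{t\in[t_i,t_{i+1}]}$-norm in $\text{Err}(h)_\theta^2$. The main obstacle is the bookkeeping of the $\|\delta Z\|^2$-terms in the matched regime: one must simultaneously absorb the Lipschitz-in-$z$ part of $f$ and the $\alpha$-part of $g$ while accommodating the shift between the $\varphi$- and $\psi$-indexations, and $\alpha<1$ together with the $\lambda$-condition are precisely what make this absorption possible; the secondary difficulty is the mismatch regime, where the Euler $Z^N$-terms are large (of order $h^{-1/2}$ pointwise) and must be handled through the a priori estimates of Lemma \ref{lemmcontrol} and Proposition \ref{control} rather than bounded naively.
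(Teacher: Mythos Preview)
Your architecture --- generalized It\^o for $|\delta Y|^2$, matched/mismatch split, the projection estimate \eqref{Zest}, discrete Gronwall --- is the paper's route, but two simplifications in the paper are worth noting. First, no exponential weight, monotonicity {\bf{(MHL)}}(iii), or $\lambda$-condition is used: It\^o is applied to the unweighted $|\delta Y|^2$ on each substep $[t\wedge\theta,t_{i+1}\wedge\theta]$, the $f$-cross-term is handled by Young's inequality $2\langle\delta Y_s,\Delta f_s\rangle\le\varepsilon|\delta Y_s|^2+\varepsilon^{-1}|\Delta f_s|^2$ with a free parameter $\varepsilon$, and after Gronwall on the step one takes $\varepsilon$ large so that the aggregate coefficient multiplying the $\|\delta Z\|^2$-contributions on the right (essentially $C_L\varepsilon^{-1}+\alpha$) falls below $1$; only $\alpha<1$ is actually needed. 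Second --- and this is the cleaner point --- the mismatch decomposition is different: the paper writes
\[
\1_{\{s<\tau\}}f(\Theta_s)-\1_{\{s<\bar\tau\}}f(\Theta^N_{\varphi(s)})
=\1_{\{s<\bar\tau\}}\bigl(f(\Theta_s)-f(\Theta^N_{\varphi(s)})\bigr)
+\bigl(\1_{\{s<\tau\}}-\1_{\{s<\bar\tau\}}\bigr)f(\Theta_s),
\]
and likewise for $g$. The Lipschitz piece then lives on the grid-aligned set $\{s<\bar\tau\}$, where \eqref{Zest} applies directly, while the residual on the mismatch interval carries only $f(\Theta_s)$, $g(\Theta_s)$; on $\{\tau\le s<\bar\tau\}$ one has $Z_s=0$ and $Y_s=\Phi(\tau,X_\tau)$, so these are bounded by $\zeta_L$ with \emph{no} $Z^N$-contribution whatsoever. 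Your decomposition instead leaves the Euler generators $f(\Theta^N_{\varphi(s)})$, $g(\Theta^N_{\psi(s)})$ active on $\{\tau\le s<\bar\tau\}$, producing $\|Z^N_{\varphi(s)}\|^2$ and $\|Z^N_{\psi(s)}\|^2$ over an interval whose left endpoint $\tau$ is not on the grid; the absorption you propose into $\Int\|\delta Z_s\|^2\,ds=\Int\|Z^N_s\|^2\,ds$ is then delicate, because the Jensen step behind \eqref{Zest} requires grid-valued endpoints. Your route can be salvaged, but the paper's pairing sidesteps the issue entirely and explains why the final bound contains $\1_{\{\bar\tau<\tau\}}\|Z_s\|^2$ but no symmetric $\|Z^N\|^2$-counterpart.
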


\begin{proof}
Equations  (\ref{BDSDE}) and (\ref{Yapprox}), the generalized  Ito's lemma (see  Lemma 1.3 in \cite{pp1994}) applied to $(Y-Y^N)^2$ on $[t\wedge\theta,t_{i+1}\wedge\theta]$ for $t\in[t_{i},t_{i+1}]$ and $i<N$, and  taking expectation  yield to  
\begin{align*}
\Delta_{t,t_{i+1}}^\theta &:= \E\big[|Y_{t\wedge\theta}-Y_{t\wedge\theta}^N|^2 +\Int_{t\wedge\theta}^{t_{i+1}\wedge\theta}\|Z_s-Z_s^N\|^2ds\big]\\
& = \E\big[|Y_{t_{i+1}\wedge\theta}-Y_{t_{i+1}\wedge\theta}^N|^2 \big]+ \E\big[2\Int_{t\wedge\theta}^{t_{i+1}\wedge\theta}(Y_{s}-Y_{s}^N)(\1_{\{s<\tau\}}f(\Theta_s)-\1_{\{s<\bar{\tau}\}}f(\Theta_{\varphi(s)}^N))ds\big]\\
&\quad +\E\big[\Int_{t\wedge\theta}^{t_{i+1}\wedge\theta}\|\1_{\{s<\tau\}}g(\Theta_s)-\1_{\{s<\bar{\tau}\}}g(\Theta_{\psi(s)}^N)\|^2ds\big] ,
\end{align*}
where $\Theta_{s}:=(X_s,Y_s,Z_s)$. Using the fact that $\1_{\{s<\tau\}}\leq \1_{\{s<\bar{\tau}\}}+\1_{\{\tau\leq s<\bar{\tau}\}}+\1_{\{\bar{\tau}\leq s<\tau\}}$ and the inequality $2ab\leq \varepsilon a^2+\varepsilon^{-1}b^2$, we then deduce that for $\varepsilon>0$ to be chosen later,
\begin{align*}
\Delta_{t,t_{i+1}}^\theta &\leq \E\big[|Y_{t_{i+1}\wedge\theta}-Y_{t_{i+1}\wedge\theta}^N|^2 \big]+\varepsilon\E\big[\Int_{t\wedge\theta}^{t_{i+1}\wedge\theta}|Y_{s}-Y_{s}^N|^2ds\big]\\
&\quad+\varepsilon^{-1}\E\big[\Int_{t\wedge\theta}^{t_{i+1}\wedge\theta}
\1_{\{s<\bar{\tau}\}}(f(\Theta_s)-f(\Theta_{\varphi(s)}^N))^2ds+\Int_{t\wedge\theta}^{t_{i+1}\wedge\theta}\1_{\{\bar{\tau}\leq s<\tau\}}(f(\Theta_s))^2ds\big]\\
&\quad+\varepsilon^{-1}\E\big[\Int_{t\wedge\theta}^{t_{i+1}\wedge\theta}\1_{\{\tau\leq s<\bar{\tau}\}}(f(\Theta_s))^2ds\big]+\E\big[\Int_{t\wedge\theta}^{t_{i+1}\wedge\theta}
\1_{\{s<\bar{\tau}\}}\|g(\Theta_s)-g(\Theta_{\psi(s)}^N)\|^2ds\big]\\
&\quad+\E\big[\Int_{t\wedge\theta}^{t_{i+1}\wedge\theta}\1_{\{\bar{\tau}\leq s<\tau\}}\|g(\Theta_s)\|^2ds\big]
+\E\big[\Int_{t\wedge\theta}^{t_{i+1}\wedge\theta}\1_{\{\tau\leq s<\bar{\tau}\}}\|g(\Theta_s)\|^2ds\big]
\end{align*}
Recall from Remark \ref{remZ} that $Z=0$ on $]\tau,T]$. Since $Y_t=\Phi(\tau,X_\tau)$ on $\{t\geq \tau\}$, we then deduce from the Lipschitz continuous assumption \textbf{(MHL)} that 
\b*
\Delta_{t,t_{i+1}}^\theta &\leq & \E\big[|Y_{t_{i+1}\wedge\theta}-Y_{t_{i+1}\wedge\theta}^N|^2 \big]+\varepsilon\E\big[\Int_{t\wedge\theta}^{t_{i+1}\wedge\theta}|Y_{s}-Y_{s}^N|^2ds\big]\nonumber\\
&\quad &+C_L\varepsilon^{-1}\E\big[\Int_{t\wedge\theta}^{t_{i+1}\wedge\theta}
\1_{\{s<\bar{\tau}\}}(|X_s-X_{\varphi(s)}^N|^2+|Y_s-Y_{\varphi(s)}^N|^2+\|Z_s-Z_{\varphi(s)}^N\|^2)ds\big]\nonumber\\
&\quad &+C_L(\varepsilon^{-1}+1)\E\big[\Int_{t\wedge\theta}^{t_{i+1}\wedge\theta}
\1_{\{\bar{\tau}\leq s<\tau\}}(|X_s|^2+|Y_s|^2)ds\big]\nonumber\\
&\quad &+C_L(\varepsilon^{-1}+1)\E\big[\Int_{t\wedge\theta}^{t_{i+1}\wedge\theta}
\1_{\{\tau\leq s<\bar{\tau}\}}(|X_\tau|^2+|\Phi(\tau,X_\tau)|^2)ds\big]\nonumber\\
&\quad &+(C_L\varepsilon^{-1}+\alpha)\E\big[\Int_{t\wedge\theta}^{t_{i+1}\wedge\theta}
\1_{\{\bar{\tau}\leq s<\tau\}}\|Z_s\|^2ds\big]\nonumber\\
&\quad &+\E\big[\Int_{t\wedge\theta}^{t_{i+1}\wedge\theta}
\1_{\{s<\bar{\tau}\}}(C_L|X_s-X_{\psi(s)}^N|^2+C_L|Y_s-Y_{\psi(s)}^N|^2+\alpha\|Z_s-Z_{\psi(s)}^N\|^2)ds\big].
\e*
Now, appealing to Proposition \ref{control} yields to
\begin{align*}
&\Delta_{t,t_{i+1}}^\theta\leq \E\big[|Y_{t_{i+1}\wedge\theta}-Y_{t_{i+1}\wedge\theta}^N|^2 \big]+\varepsilon\E\big[\Int_{t\wedge\theta}^{t_{i+1}\wedge\theta}|Y_{s}-Y_{s}^N|^2ds\big]\nonumber\\
&+C_L\varepsilon^{-1}\E\big[\Int_{t\wedge\theta}^{t_{i+1}\wedge\theta}
(h+|Y_s-Y_{\varphi(s)}|^2+|Y_{\varphi(s)}-Y_{\varphi(s)}^N|^2+\|Z_s-\bar{Z}_{\varphi(s)}\|^2+\|\bar{Z}_{\varphi(s)}-Z_{\varphi(s)}^N\|^2)ds\big]\nonumber\\
&+C_L(\varepsilon^{-1}+1)\E\big[\Int_{t\wedge\theta}^{t_{i+1}\wedge\theta}
\1_{\{\bar{\tau\wedge}\tau\leq s<\tau\vee\bar{\tau}\}} \zeta_L ds\big]+(C_L\varepsilon^{-1}+\alpha)\E\big[\Int_{t\wedge\theta}^{t_{i+1}\wedge\theta}
\1_{\{\bar{\tau}\leq s<\tau\}}\|Z_s\|^2ds\big]\nonumber\\
&+\E\big[\Int_{t\wedge\theta}^{t_{i+1}\wedge\theta}
(C_Lh+C_L|Y_s-Y_{\psi(s)}|^2+C_L|Y_{\psi(s)}-Y_{\psi(s)}^N|^2+\alpha\|Z_s-\bar{Z}_{\psi(s)}\|^2+\alpha\|\bar{Z}_{\psi(s)}-Z_{\psi(s)}^N\|^2)ds\big].
\end{align*}
Next, we obtain from the definition of $\varphi$
\begin{align}\label{delta_t}
&\Delta_{t,t_{i+1}}^\theta \leq \E\big[|Y_{t_{i+1}\wedge\theta}-Y_{t_{i+1}\wedge\theta}^N|^2 \big]+\varepsilon\E\big[\Int_{t\wedge\theta}^{t_{i+1}\wedge\theta}|Y_{s}-Y_{s}^N|^2ds\big]\nonumber\\
&\quad+C_L(\varepsilon^{-1}+1)\E\big[h|Y_{t_{i}\wedge\theta}-Y_{t_{i}\wedge\theta}^N|^2+h|Y_{t_{i+1}\wedge\theta}-Y_{t_{i+1}\wedge\theta}^N|^2+\Int_{t\wedge\theta}^{t_{i+1}\wedge\theta}(|Y_s-Y_{\varphi(s)}|^2+|Y_s-Y_{\psi(s)}|^2ds)\big]\nonumber\\
&\quad+C_L(\varepsilon^{-1}+1)\E\big[\Int_{t\wedge\theta}^{t_{i+1}\wedge\theta}hds\big]+C_L\varepsilon^{-1}\E\big[\Int_{t\wedge\theta}^{t_{i+1}\wedge\theta}(\|Z_s-\bar{Z}_{\varphi(s)}\|^2+\|\bar{Z}_{\varphi(s)}-Z_{\varphi(s)}^N\|^2)ds\big]\nonumber\\
&\quad+\alpha\E\big[\Int_{t\wedge\theta}^{t_{i+1}\wedge\theta}(\|Z_s-\bar{Z}_{\psi(s)}\|^2+\|\bar{Z}_{\psi(s)}-Z_{\psi(s)}^N\|^2)ds\big]\nonumber\\
&\quad+C_L(\varepsilon^{-1}+1)\E\big[\Int_{t\wedge\theta}^{t_{i+1}\wedge\theta}
\1_{\{\bar{\tau\wedge}\tau\leq s<\tau\vee\bar{\tau}\}} \zeta_L ds\big]+(C_L\varepsilon^{-1}+\alpha)\E\big[\Int_{t\wedge\theta}^{t_{i+1}\wedge\theta}
\1_{\{\bar{\tau}\leq s<\tau\}}\|Z_s\|^2ds\big].
\end{align}
It then follows from Gronwall's lemma that
\begin{align}\label{Gronwall}
\E\big[|Y_{t\wedge\theta}- & Y_{t\wedge\theta}^N|^2\big]\nonumber
 \leq (1+C_L(\varepsilon^{-1}+1)h+C_L^{\varepsilon}h)\E\big[|Y_{t_{i+1}\wedge\theta}-Y_{t_{i+1}\wedge\theta}^N|^2 \big]\nonumber\\
&\quad+(C_L(\varepsilon^{-1}+1)+C_L^{\varepsilon}h)\E\big[h|Y_{t_{i}\wedge\theta}-Y_{t_{i}\wedge\theta}^N|^2+\Int_{t\wedge\theta}^{t_{i+1}\wedge\theta}(|Y_s-Y_{\varphi(s)}|^2+|Y_s-Y_{\psi(s)}|^2)ds\big]\nonumber\\
&\quad+(C_L(\varepsilon^{-1}+1)+C_L^{\varepsilon}h)\E\big[\Int_{t\wedge\theta}^{t_{i+1}\wedge\theta}hds\big] \nonumber\\
& \quad +(C_L\varepsilon^{-1} +C_L^{\varepsilon}h)\E\big[\Int_{t\wedge\theta}^{t_{i+1}\wedge\theta}(\|Z_s-\bar{Z}_{\varphi(s)}\|^2+\|\bar{Z}_{\varphi(s)}-Z_{\varphi(s)}^N\|^2)ds\big]\nonumber\\
&\quad+(\alpha+C_L^{\varepsilon}h)\E\big[\Int_{t\wedge\theta}^{t_{i+1}\wedge\theta}(\|Z_s-\bar{Z}_{\psi(s)}\|^2+\|\bar{Z}_{\psi(s)}-Z_{\psi(s)}^N\|^2)ds\big]\nonumber\\
&\quad+(C_L(\varepsilon^{-1}+1)+C_L^{\varepsilon}h)\E\big[\Int_{t\wedge\theta}^{t_{i+1}\wedge\theta}
\1_{\{\bar{\tau\wedge}\tau\leq s<\tau\vee\bar{\tau}\}} \zeta_L ds\big]\nonumber\\
&  \quad +(C_L\varepsilon^{-1}+\alpha+C_L^{\varepsilon}h)\E\big[\Int_{t\wedge\theta}^{t_{i+1}\wedge\theta}
\1_{\{\bar{\tau}\leq s<\tau\}}\|Z_s\|^2ds\big].\nonumber\\
\end{align}
Then, by taking $t=t_i$ in (\ref{delta_t}), using (\ref{Gronwall}) to estimate the second term in the right-hand side of (\ref{delta_t}) and recalling Remark \ref{remZ} we have for $\varepsilon>0$ sufficiently large, depending on the constants $C_L$, and $h$ small 
\begin{align*}
& \quad \Delta_{t_{i},t_{i+1}}^\theta \leq (1+C_Lh)\E\big[|Y_{t_{i+1}\wedge\theta}-Y_{t_{i+1}\wedge\theta}^N|^2 \big]\\
&\quad+C_L\E\big[\Int_{t_{i}\wedge\theta}^{t_{i+1}\wedge\theta}(h+|Y_s-Y_{\varphi(s)}|^2+|Y_s-Y_{\psi(s)}|^2ds)\big]\\
&\quad+C_L\E\big[\Int_{t_{i}\wedge\theta}^{t_{i+1}\wedge\theta}\|Z_s-\bar{Z}_{\varphi(s)}\|^2ds\big]+C_L\E\big[\Int_{t_{i}\wedge\theta}^{t_{i+1}\wedge\theta}\|Z_s-\bar{Z}_{\psi(s)}\|^2ds\big]\nonumber\\
&\quad+C_L\E\big[\Int_{t_{i}\wedge\theta}^{t_{i+1}\wedge\theta}
\1_{\{\bar{\tau\wedge}\tau\leq s<\tau\vee\bar{\tau}\}} \zeta_L ds\big]+C_L\E\big[\Int_{t_{i}\wedge\theta}^{t_{i+1}\wedge\theta}
\1_{\{\bar{\tau}\leq s<\tau\}}\|Z_s\|^2ds\big].\nonumber
\end{align*}
Thus, from the following estimate
\begin{align*}
\E[|Y_s-Y_{\psi(s)}|^2]&\leq \E[\underset{t_i\leq s\leq t_{i+1}}{\Sup}|Y_s-Y_{\psi(s)}|^2]\\
&\leq  C_{L}(1+|x|)h,
\end{align*}
we  conclude that
\begin{align*}
\Delta^\theta &:=\underset{i<N}{\Max} \E\big[|Y_{t_{i}\wedge\theta}-Y_{t_{i}\wedge\theta}^N|^2 +\Int_{0}^{\theta}\|Z_s-Z_s^N\|^2ds\big]\\
&\leq C_L\big(\E\big[|Y_{\theta}-Y_{\theta}^N|^2 \big]+h+\Rc(Y)_{\Sc^2}^{\pi}+\Rc(Z)_{\Hc^2}^{\pi}+ E\big[\Int _0^T\|Z_t-\bar{Z}_{\psi(t)}\|^2 dt\big]\big)\\
& + C_L\E\big[\zeta_L|\bar{\tau}\wedge\theta-\tau\wedge\theta|+\Int_{0}^{\theta}\1_{\{\bar{\tau}\leq s<\tau\}}\|Z_s\|^2ds\big].
\end{align*}
We finish the proof by using again Remark \ref{remZ} to obtain
\begin{align}
\E\bigg[&\Int_0^{\theta}\|Z_s-Z_{\varphi(s)}^N\|^2ds\bigg]\nonumber\\
&\leq C_L\bigg(\E\bigg[\Int_0^{\theta}\|\bar{Z}_{\varphi(s)}-Z_{\varphi(s)}^N\|^2ds\bigg]+\E\bigg[\Int_0^{T}\|Z_s-\bar{Z}_{\varphi(s)}\|^2ds\bigg]\bigg)\\
&\leq C_L\bigg(\E\bigg[\Int_0^{\theta}\|Z_{s}-Z_s^N\|^2ds\bigg]+\Rc(Z)_{\Hc^2}^{\pi}\bigg),\nonumber
\end{align}
which implies the required result, by the definition of $\text{Err}(h)_{\theta}^2$ in (\ref{errtheta}).
\ep
\end{proof}

\begin{Proposition}\label{properr2}
Let Assumptions {\bf{(MHL)}}, {\bf{(MHD)}} and {\bf{(MHT)}} hold. There then exist $C_L>0$ and a positive random variable $\zeta_L$ satisfying $\E[(\zeta_L)^p]\leq C_L^p$ for all $p\geq 2$ such that 
\be\label{errT}
\begin{split}
\text{Err}(h)_{T}^2\leq C_L& \Big(h+\Rc(Y)_{\Sc^2}^{\pi}+\Rc(Z)_{\Hc^2}^{\pi}+\E\big[\zeta_L|\tau-\bar{\tau}|+\1_{\{\bar{\tau}<\tau\}}\Int_{\bar{\tau}}^{\tau}\|Z_s\|^2ds\big]\\
&+\E\big[\Int _0^T\|Z_t-\bar{Z}_{\psi(t)}\|^2 dt\big]\Big).
\end{split}
\ee 
and
\begin{align}\label{errtau}
\begin{split}
\text{Err}(h)_{\tau\wedge\bar{\tau}}^2 \leq \text{Err}(h)_{\tau_{+}\wedge\bar{\tau}}^2 \leq C_L & \Big(h+\Rc(Y)_{\Sc^2}^{\pi}+\Rc(Z)_{\Hc^2}^{\pi}+\E\big[\zeta_L|\tau-\bar{\tau}|+\1_{\{\bar{\tau}<\tau\}}\Int_{\bar{\tau}}^{\tau}\|Z_s\|^2ds\big]\\
&+\E\big[\Int _0^T\|Z_t-\bar{Z}_{\psi(t)}\|^2 dt\big]\Big),
\end{split}
\end{align}
where we recall $\tau_+$ is the next time after $\tau$ in the grid $\pi$ such that $\tau_+:=\Inf\{t\in \pi : \tau\leq t\}$.
\end{Proposition}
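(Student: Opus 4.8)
The plan is to derive \eqref{errT} directly from Theorem \ref{errtheta} evaluated at the deterministic grid point $\theta=T=t_N$, and then to deduce \eqref{errtau} from the monotonicity of $\theta\mapsto\text{Err}(h)_{\theta}^{2}$. Applying Theorem \ref{errtheta} with $\theta=T$ and using $\bar\tau\vee\tau\leq T$ $\P$-a.s., its last term reduces to $\E[\zeta_L|\tau-\bar\tau|+\1_{\{\bar\tau<\tau\}}\int_{\bar\tau}^{\tau}\|Z_s\|^2ds]$ (the integral of $\zeta_L$ over $[\tau\wedge\bar\tau,\tau\vee\bar\tau]$ equals $\zeta_L|\tau-\bar\tau|$, and on $\{\bar\tau<\tau\}$ that interval is $[\bar\tau,\tau]$), while the remaining terms of the bound --- $C_Lh$, $C_L\Rc(Y)_{\Sc^2}^{\pi}$, $C_L\Rc(Z)_{\Hc^2}^{\pi}$ and $C_L\E[\int_0^T\|Z_t-\bar Z_{\psi(t)}\|^2dt]$ --- already occur on the right-hand side of \eqref{errT}. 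Hence \eqref{errT} follows as soon as one controls the single extra term $C_L\E[|Y_T-Y_T^N|^2]$.

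So the heart of the matter is the estimate $\E[|Y_T-Y_T^N|^2]\leq C_L(h+\E[\zeta_L|\tau-\bar\tau|])$. Since $Y_T=\Phi(\tau,X_\tau)$ and $Y_T^N=\Phi(\bar\tau,X^N_{\bar\tau})$, the Lipschitz property of $\Phi$ (Assumption {\bf(MHT)}) gives $|Y_T-Y_T^N|^2\leq 2L^2(|\tau-\bar\tau|^2+|X_\tau-X^N_{\bar\tau}|^2)$, and $\E[|\tau-\bar\tau|^2]\leq T\,\E[|\tau-\bar\tau|]$ is already of the required form. For the forward part I would split
$$X_\tau-X^N_{\bar\tau}=(X_\tau-X_{\tau\wedge\bar\tau})+(X_{\tau\wedge\bar\tau}-X^N_{\tau\wedge\bar\tau})+(X^N_{\tau\wedge\bar\tau}-X^N_{\bar\tau}),$$
and handle the three terms separately. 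The middle one has second moment $\leq\E[\sup_{t\in[0,T]}|X_t-X^N_t|^2]\leq C_Lh$ by Proposition \ref{control}. The first vanishes on $\{\tau\leq\bar\tau\}$, and on $\{\bar\tau<\tau\}$ equals $\int_{\bar\tau}^{\tau}b(X_s)ds+\int_{\bar\tau}^{\tau}\sigma(X_s)dB_s$; since $\tau$ is the exit time of $X$ from $\Oc$, one has $X_s\in\bar\Oc$ for $s\in[\bar\tau,\tau]$, so $b$ and $\sigma$ are bounded along the path, and Cauchy--Schwarz together with It\^o's isometry for the stopped stochastic integral and $|\tau-\bar\tau|\leq T$ bound its second moment by $C_L\E[\1_{\{\bar\tau<\tau\}}|\tau-\bar\tau|]$. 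The third term is symmetric: it vanishes on $\{\bar\tau\leq\tau\}$, and on $\{\tau<\bar\tau\}$ equals $\int_{\tau}^{\bar\tau}b(X^N_{\varphi(s)})ds+\int_{\tau}^{\bar\tau}\sigma(X^N_{\varphi(s)})dB_s$ with $X^N_{\varphi(s)}\in\Oc\subset\bar\Oc$ for $s\in[\tau,\bar\tau)$ by definition of $\bar\tau$, so the same computation bounds its second moment by $C_L\E[\1_{\{\tau<\bar\tau\}}|\tau-\bar\tau|]$. Summing gives $\E[|X_\tau-X^N_{\bar\tau}|^2]\leq C_L(h+\E[\zeta_L|\tau-\bar\tau|])$, hence the claimed bound on $\E[|Y_T-Y_T^N|^2]$ and, by the first paragraph, \eqref{errT}. (The moment bounds on $X,X^N,Y^N,Z^N$ needed to make the $\zeta_L$'s legitimate and $\text{Err}(h)^2$ finite are supplied by Proposition \ref{control} and Lemma \ref{lemmcontrol}.)

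It remains to observe that \eqref{errtau} is a free consequence of \eqref{errT}: the map $\theta\mapsto\text{Err}(h)_{\theta}^{2}$ is non-decreasing, because $\1_{\{t<\theta\}}$ and the interval $[0,\theta]$ both increase with $\theta$, the integrands are non-negative, and the maximum over $i$ preserves monotonicity; since $\tau\leq\tau_+$ one has $\tau\wedge\bar\tau\leq\tau_+\wedge\bar\tau\leq T$ $\P$-a.s., whence $\text{Err}(h)_{\tau\wedge\bar\tau}^{2}\leq\text{Err}(h)_{\tau_+\wedge\bar\tau}^{2}\leq\text{Err}(h)_{T}^{2}$, and the right member is bounded by \eqref{errT}. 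I expect the only genuine difficulty to be the terminal estimate on $\E[|Y_T-Y_T^N|^2]$, and inside it the two ``boundary-overshoot'' terms $X_\tau-X_{\bar\tau}$ and $X^N_\tau-X^N_{\bar\tau}$: \emph{a priori} $\tau$ and $\bar\tau$ may be far apart, so these are not small time-increments; what makes the estimate go through is the geometric fact that between the two exit times the relevant trajectory ($X_s$, respectively the frozen Euler values $X^N_{\varphi(s)}$) stays in the compact set $\bar\Oc$ on which $b$ and $\sigma$ are bounded, which turns integrals over $[\tau\wedge\bar\tau,\tau\vee\bar\tau]$ into contributions of order $|\tau-\bar\tau|$ rather than $|\tau-\bar\tau|^2$, exactly matching the right-hand side of \eqref{errT}.
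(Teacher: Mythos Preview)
Your argument for \eqref{errT} is essentially the paper's: apply Theorem~\ref{errtheta} at $\theta=T$ and control the terminal term $\E[|\Phi(\tau,X_\tau)-\Phi(\bar\tau,X^N_{\bar\tau})|^2]$ via the Lipschitz property of $\Phi$ and a forward-process estimate. The paper uses the two-term split $X_\tau-X^N_{\bar\tau}=(X_\tau-X_{\bar\tau})+(X_{\bar\tau}-X^N_{\bar\tau})$ together with Doob's inequality and the generic moment bounds of Proposition~\ref{control} (hence a random $\zeta_L$), whereas you use a three-term split through $X_{\tau\wedge\bar\tau}$ and exploit the geometric fact that the relevant trajectory stays in $\bar\Oc$ to get deterministic bounds on $b,\sigma$. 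Both lead to the same estimate; your version is slightly more elementary.

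Where you genuinely diverge is \eqref{errtau}. The paper does \emph{not} use monotonicity of $\theta\mapsto\text{Err}(h)_\theta^2$: it re-applies Theorem~\ref{errtheta} at the grid-valued stopping time $\theta=\tau_+\wedge\bar\tau$ and then spends most of the proof establishing
\[
\E\big[|Y_{\tau_+\wedge\bar\tau}-Y^N_{\tau_+\wedge\bar\tau}|^2\big]\leq C_L\Big(h+\E\big[\zeta_L|\tau-\bar\tau|+\1_{\{\bar\tau<\tau\}}\!\Int_{\bar\tau}^{\tau}\|Z_s\|^2ds\big]\Big),
\]
by writing out both BDSDEs between $\tau_+\wedge\bar\tau$ and their respective terminal times and estimating each stochastic integral separately; this is where Lemma~\ref{lemmcontrol} (the $\zeta_L$-control of $Y^N_{t_i},Z^N_{t_i}$) is actually invoked. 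Your monotonicity observation $\text{Err}(h)_{\tau\wedge\bar\tau}^2\leq\text{Err}(h)_{\tau_+\wedge\bar\tau}^2\leq\text{Err}(h)_T^2$ is correct and yields \eqref{errtau} directly from \eqref{errT} with no further work, bypassing Lemma~\ref{lemmcontrol} entirely. The trade-off is that the paper's route also produces the intermediate estimate on $\E[|Y_{\tau_+\wedge\bar\tau}-Y^N_{\tau_+\wedge\bar\tau}|^2]$, which may be of independent interest; for the proposition as stated, your shortcut is cleaner.
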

\begin{Remark}
Note that we shall control $\text{Err}(h)_{\tau\wedge\bar{\tau}}^2$ through the slightly stronger term $ \text{Err}(h)_{\tau_{+}\wedge\bar{\tau}}^2$. This will allow us to work with stopping times with values in the grid $\pi$ in order to be able to apply \eqref{Zest}, which will be technically easier.
\end{Remark}
\begin{proof}\\
(i) First to prove (\ref{errT}), it suffices to apply Theorem \ref{errtheta} for $\theta=T$ and observe that the Lipschitz continuity of $\Phi$ implies that
\begin{align*}
\E[|&\Phi(\tau,X_{\tau})-\Phi(\bar{\tau},X^N_{\bar{\tau}})|^2]\\
&\leq C_L\E\big[|\tau-\bar{\tau}|^2+|X_{\bar{\tau}}-X^N_{\bar{\tau}}|^2+\Big|\Int_{\tau\wedge\bar{\tau}}^{\tau\vee\bar{\tau}}b(X_s)ds+\Int_{\tau\wedge
\bar{\tau}}^{\tau\vee\bar{\tau}}\sigma(X_s)dB_s\Big|^2\big],
\end{align*}
where $|\tau-\bar{\tau}|^2\leq T|\tau-\bar{\tau}|$, $\E[|X_{\bar{\tau}}-X^N_{\bar{\tau}}|^2]\leq C_L h$ by Proposition \ref{control} and 
\b*
\E\Big[\Big|\Int_{\tau\wedge\bar{\tau}}^{\tau\vee\bar{\tau}}b(X_s)ds
+\Int_{\tau\wedge\bar{\tau}}^{\tau\vee\bar{\tau}}\sigma(X_s)dB_s\Big|^2\Big]\leq \E[\zeta_L|\tau-\bar{\tau}|]
\e* by Doob's inequality, \textbf{(MHD)} and Proposition \ref{control} again.\\
(ii) We now prove the upper bound (\ref{errtau}). We have by applying Theorem \ref{errtheta} to $\theta=\tau_{+}\wedge\bar{\tau}$
\b*
\text{Err}(h)_{\tau_{+}\wedge\bar{\tau}}^2\leq C_L \big(h +\E[|Y_{\tau_{+}\wedge\bar{\tau}}-Y_{\tau_{+}\wedge\bar{\tau}}^N|^2]+\Rc(Y)_{\Sc^2}^{\pi}+\Rc(Z)_{\Hc^2}^{\pi}\big).
\e*
It remains to show that 
\be
\E[|Y_{\tau_{+}\wedge\bar{\tau}}-Y_{\tau_{+}\wedge\bar{\tau}}^N|^2]\leq  C_L \big(h+\E\big[\zeta_L|\tau-\bar{\tau}|+\1_{\{\bar{\tau}<\tau\}}\Int_{\bar{\tau}}^{\tau}\|Z_s\|^2ds\big]\big).
\ee
Observe that by (\ref{BDSDE}) and (\ref{Yapprox})
\begin{align} \label{differY}
Y_{\tau_{+}\wedge\bar{\tau}}&-Y_{\tau_{+}\wedge\bar{\tau}}^N= \Phi(\tau,X_{\tau})-\Phi(\bar{\tau},X^N_{\bar{\tau}})\nonumber\\
&+\1_{\{\tau_{+}<\bar{\tau}\}}\bigg(\Int_{\tau_{+}}^{\bar{\tau}}
f(X^N_{\varphi(s)},Y^N_{\varphi(s)},Z^N_{\varphi(s)})ds+
\Int_{\tau_{+}}^{\bar{\tau}}g(X^N_{\psi(s)},Y^N_{\psi(s)},Z^N_{\psi(s)})d\W_s
-\Int_{\tau_{+}}^{\bar{\tau}}Z^N_{s}dB_s\bigg)\nonumber\\
&+\1_{\{\bar{\tau}<\tau_{+}\}}\bigg(\Int_{\bar{\tau}}^{\tau}f(X_s,Y_s,Z_s)ds
+\Int_{\bar{\tau}}^{\tau}g(X_s,Y_s,Z_s)d\W_s-\Int_{\bar{\tau}}^{\tau}Z_sdB_s\bigg).
\end{align}
Then
\begin{align}
\E[|Y_{\tau_{+}\wedge\bar{\tau}}&-Y_{\tau_{+}\wedge\bar{\tau}}^N|^2]\leq \E[|\Phi(\tau,X_{\tau})-\Phi(\bar{\tau},X^N_{\bar{\tau}})|^2]\nonumber\\
&+\E\Big[\1_{\{\tau_{+}<\bar{\tau}\}}\big|\Int_{\tau_{+}}^{\bar{\tau}}f(X^N_{\varphi(s)},Y^N_{\varphi(s)},Z^N_{\varphi(s)})ds\big|^2\Big]+\E\Big[\1_{\{\tau_{+}<\bar{\tau}\}}\big|\Int_{\tau_{+}}^{\bar{\tau}}g(X^N_{\psi(s)},Y^N_{\psi(s)},Z^N_{\psi(s)})d\W_s\big|^2\Big]\nonumber\\
&+\E\Big[\1_{\{\bar{\tau}<\tau_{+}\}}\big|\Int_{\bar{\tau}}^{\tau}f(X_s,Y_s,Z_s)ds\big|^2\Big]+\E\Big[\1_{\{\bar{\tau}<\tau_{+}\}}\big|\Int_{\bar{\tau}}^{\tau}g(X_s,Y_s,Z_s)d\W_s\big|^2\Big].
\end{align} 
We start with the first term in the right hand side of (\ref{differY}). By using \textbf{(MHD)}, \textbf{(MHL)}, \textbf{(MHT)} and Proposition \ref{control} and applying It\^o's lemma to $(\Phi(t,X_t))_{t\geq 0}$ between $\bar{\tau}$ and $\tau$, we easily check that 
\b*
\E[|\Phi(\tau,X_{\tau})-\Phi(\bar{\tau},X^N_{\bar{\tau}})|^2]&\leq &C_L\big(\E[|X_{\bar{\tau}}-X^N_{\bar{\tau}}|^2]+\E\big[\big |\Int_{\bar{\tau}}^{\tau} \Lc \Phi(s,X_s)ds\big|^2\big]\big)\\
&\leq &  C_L\big(\E[|X_{\bar{\tau}}-X^N_{\bar{\tau}}|^2]+\E[\zeta_L|\tau-\bar{\tau}|]\big),
\e*
where $\mathcal L$ is the second order differential operator defined in \eqref{operator}.\\
Then, by appealing to \textbf{(MHD)} and Proposition \ref{control} 
we conclude that 
\be
 \E[|\Phi(\tau,X_{\tau})-\Phi(\bar{\tau},X^N_{\bar{\tau}})|^2]
&\leq & C_L \big(h+ \E[\zeta_L|\tau-\bar{\tau}|]\big).
\ee
For the second term in (\ref{differY}), it follows from Jensen's inequality, the isometry property, the Lipschitz continuous assumption \textbf{(MHL)}, Lemma \ref{lemmcontrol} and Proposition \ref{control} that
\begin{align}
\E\Big[&1_{\{\tau_{+}<\bar{\tau}\}}|\Int_{\tau_{+}}^{\bar{\tau}}f(X^N_{\varphi(s)},Y^N_{\varphi(s)},Z^N_{\varphi(s)})ds|^2\Big]+\E\Big[\1_{\{\tau_{+}<\bar{\tau}\}}|\Int_{\tau_{+}}^{\bar{\tau}}g(X^N_{\psi(s)},Y^N_{\psi(s)},Z^N_{\psi(s)})d\W_s|^2\Big]\nonumber\\
&\leq\E\Big[|\bar{\tau}-\tau_{+}|\Int_{\tau_{+}}^{\bar{\tau}}|f(X^N_{\varphi(s)},Y^N_{\varphi(s)},Z^N_{\varphi(s)})|^2ds\Big]+\E\Big[\Int_{\tau_{+}}^{\bar{\tau}}|g(X^N_{\psi(s)},Y^N_{\psi(s)},Z^N_{\psi(s)})|^2ds\Big]\nonumber\\
&\leq C_L\E\Big[\Int_{\tau_{+}}^{\bar{\tau}}\big(|X^N_{\varphi(s)}|^2 +|Y^N_{\varphi(s)}|^2+\|Z^N_{\varphi(s)}\|^2+|X^N_{\psi(s)}|^2+|Y^N_{\psi(s)}|^2+\|Z^N_{\psi(s)}\|^2\big)ds\Big]\nonumber\\
&\leq C_L\E\Big[\zeta_L(|\bar{\tau}-\tau|+|\tau-\tau_{+}|)\Big]\nonumber\\
&\leq C_L\E[h+\zeta_L|\bar{\tau}-\tau|].
\end{align}
The last term is easily controlled by using the same previous calculations.
\begin{align}
\E\Big[&\1_{\{\bar{\tau}<\tau_{+}\}}\big|\Int_{\bar{\tau}}^{\tau}f(X_s,Y_s,Z_s)ds\big|^2\Big]+\E\Big[1_{\{\bar{\tau}<\tau_{+}\}}|\Int_{\bar{\tau}}^{\tau}g(X_s,Y_s,Z_s)d\W_s|^2\Big]\nonumber\\
&\leq C_L \Big(\E\Big[|\tau-\bar{\tau}|\Int_{\bar{\tau}}^{\tau}\big|f(X_s,Y_s,Z_s)\big|^2ds\Big]+\E\Big[1_{\{\bar{\tau}<\tau_{+}\}}\Int_{\bar{\tau}}^{\tau}|g(X_s,Y_s,Z_s)|^2ds\Big]\Big)\nonumber\\
&\leq C_L \Big(\E\Big[|\tau-\bar{\tau}|\Int_{\bar{\tau}}^{\tau}C_L(|X_s|^2+|Y_s|^2+\|Z_s\|^2)ds\Big]+\E\Big[1_{\{\bar{\tau}<\tau_{+}\}}\Int_{\bar{\tau}}^{\tau}(|X_s|^2+|Y_s|^2+\|Z_s\|^2)ds\Big]\Big)\nonumber\\
&\leq C_L \E\big[|\tau-\bar{\tau}|^2\zeta_L\big]+\E\big[|\tau-\bar{\tau}|\Int_{\bar{\tau}}^{\tau}C_L\|Z_s\|^2ds\big]+C_L\E\Big[\zeta_L|\bar{\tau}-\tau|+1_{\{\bar{\tau}<\tau\}}\Int_{\bar{\tau}}^{\tau}\|Z_s\|^2ds\Big]\nonumber\\
&\leq C_L\E\Big[\zeta_L|\bar{\tau}-\tau|+1_{\{\bar{\tau}<\tau\}}\Int_{\bar{\tau}}^{\tau}\|Z_s\|^2ds\Big].
\end{align}
Finally, we finish the proof of (\ref{differY}) by combining the three estimates.
\ep
\end{proof}

Our next result concerns the regularity of $(Y,Z)$ which was  proved in \cite{B14}:
\begin{Theorem}
Let Assumptions \textbf{(D)}, \textbf{(MHT)}, \textbf{(MHL)} and \textbf{(MHD)} hold. Then
\be \label{regularite}
\Rc(Y)_{\Sc^2}^{\pi}+\Rc(Z)_{\Hc^2}^{\pi}\leq C_L h\quad
\text{and}\quad \E\big[\Int _0^T\|Z_t-\bar{Z}_{\psi(t)}\|^2 dt\big]\leq C_L h.
\ee
\end{Theorem}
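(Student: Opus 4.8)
The plan is to read this regularity result as the Zhang-type $L^2$ path-regularity theorem transported to the doubly stochastic exit-time setting, and to reduce all three bounds to a single control on the $L^2$ time-modulus of $Z$, namely $\E[\Int_0^T\|Z_t-Z_{\varphi(t)}\|^2dt]+\E[\Int_0^T\|Z_t-Z_{\psi(t)}\|^2dt]\le C_L h$. Since $\bar Z_{t_i}=h^{-1}\E_{t_i}[\Int_{t_i}^{t_{i+1}}Z_s\,ds]$ is precisely the $L^2$-orthogonal projection of $(Z_s)_{s\in[t_i,t_{i+1}]}$ onto the $\Fc_{t_i}$-measurable random variables, one has $\E[\Int_{t_i}^{t_{i+1}}\|Z_s-\bar Z_{t_i}\|^2ds]\le\E[\Int_{t_i}^{t_{i+1}}\|Z_s-Z_{t_i}\|^2ds]$ and hence, after summation, $\Rc(Z)_{\Hc^2}^{\pi}\le\E[\Int_0^T\|Z_t-Z_{\varphi(t)}\|^2dt]$; the $\psi$-term is handled identically after inserting $Z_{t_{i+1}}$ and bounding $\bar Z_{t_{i+1}}-Z_{t_{i+1}}$ by Jensen's inequality in terms of $\Int_{t_{i+1}}^{t_{i+2}}\|Z_s-Z_{t_{i+1}}\|^2ds$. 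For $\Rc(Y)_{\Sc^2}^{\pi}$, once it is known that $\E[\Int_{t_i\wedge\tau}^{t_{i+1}\wedge\tau}\|Z_s\|^2ds]\le C_L h$ uniformly in $i$ (see the next paragraph), one writes $Y_t-Y_{t_i}$ from \eqref{BDSDE} as a Lebesgue drift plus a backward It\^o integral in $\W$ plus a forward It\^o integral in $B$ over $[t_i\wedge\tau,t\wedge\tau]$ and concludes by Burkholder--Davis--Gundy (in Kunita's form for the $\W$-integral), Cauchy--Schwarz on the drift, the linear growth of $f$ and $g$ contained in \textbf{(MHL)}, and the a priori bounds \eqref{integrability}--\eqref{apriori1} together with Proposition \ref{control}.

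The core estimate I would obtain through the Markovian structure. Under \textbf{(D)}, \textbf{(MHD)}, \textbf{(MHT)} and \textbf{(MHL)} the solution admits the representation $Y_s=u(s\wedge\tau,X_{s\wedge\tau})$, $Z_s=\1_{\{s<\tau\}}(\nabla u\,\sigma)(s,X_s)$, where $u$ is the (random) Sobolev solution of the associated semilinear SPDE on the cylinder $D$ with Dirichlet data inherited from $\Phi$. Peeling off the $\W$-noise by the Bally--Matoussi stochastic flow reduces this SPDE, pathwise in the $W$-variable, to a family of deterministic semilinear parabolic Dirichlet problems on the $C^2$ domain $\Oc$; classical Schauder and $L^p$ parabolic theory -- using the uniform ellipticity \eqref{ellipticity}, the Lipschitz regularity of $b,\sigma$ from \textbf{(MHD)}, the $C^{1,2}$ smoothness of $\Phi$ from \textbf{(MHT)} and the Lipschitz/monotone structure of $f,g$ -- then yields that $v:=\nabla u\,\sigma$ extends continuously to $\bar D$, is bounded there by a random constant $\zeta_L$, and is Lipschitz in $x$ and $\tfrac12$-H\"older in $t$ up to $\bar D$, all with moments controlled uniformly. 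In particular $\|Z_s\|\le\zeta_L$, which already gives $\E[\Int_{t_i\wedge\tau}^{t_{i+1}\wedge\tau}\|Z_s\|^2ds]\le h\,\E[\zeta_L^2]\le C_L h$ as required in the first paragraph.

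It then remains to bound $\E[\|Z_t-Z_s\|^2]$ for $0\le t-s\le h$. Writing $Z_r=\1_{\{r<\tau\}}v(r,X_{r\wedge\tau})$, split $\|Z_t-Z_s\|^2\le 2\,\1_{\{s<\tau\}\cap\{t<\tau\}}\|v(t,X_{t\wedge\tau})-v(s,X_{s\wedge\tau})\|^2+2\,\1_{\{s\wedge t<\tau\le s\vee t\}}\zeta_L^2$. On the first event the increment is at most $\zeta_L\big(|t-s|^{1/2}+|X_{t\wedge\tau}-X_{s\wedge\tau}|\big)$, and since $s\wedge\tau\le t\wedge\tau\le s\wedge\tau+h$ are stopping times, Proposition \ref{control} and Cauchy--Schwarz give $\E[\zeta_L^2|X_{t\wedge\tau}-X_{s\wedge\tau}|^2]\le C_L h$, so this piece contributes $C_L h$; on the second event, integrating the indicator over $s$ (resp. over $t$) in $[0,T]$ meets only a time window of length at most $h$ around $\tau$, so its contribution is at most $\E[\zeta_L^2]\,h\le C_L h$. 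Applying this with $s=\varphi(t)$ and with $s=\psi(t)$ gives the core estimate, and the first paragraph then closes the proof. I expect the main obstacle to be exactly this up-to-the-boundary regularity of $\nabla u$ -- equivalently, the spatial regularity of $Z$ as the exit time is approached: this is where the $C^2$-domain hypothesis \textbf{(D)} and the uniform ellipticity \eqref{ellipticity} are indispensable, and it is also the reason why the exit-time rate cannot be improved beyond $h^{1/2}$.

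A secondary technical difficulty, specific to the genuine SPDE case $g=g(t,x,y,z)$, is to run the stochastic-flow reduction while keeping track of the dependence of the random Schauder constants on the realization of $W$, so that the resulting $\zeta_L$ does satisfy $\E[\zeta_L^p]\le C_L^p$ for all $p$. Alternatively one can bypass the SPDE by identifying $Z_t$ with a version of the trace $D_tY_t$ of the Malliavin derivative and proving the $L^2$-continuity of $t\mapsto D_tY_t$ by differentiating the BDSDE; but this still hinges on the Malliavin differentiability of $\Phi(\tau,X_\tau)$, which once more rests on \textbf{(D)} and \eqref{ellipticity}.
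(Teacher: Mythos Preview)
The paper does not prove this theorem: it is stated with the attribution ``which was proved in \cite{B14}'' and no argument is given. So there is no in-paper proof to compare against; your proposal is being measured against the cited external reference, which is not reproduced here.

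That said, your primary route has a real gap. The step ``peeling off the $\W$-noise by the Bally--Matoussi stochastic flow reduces this SPDE, pathwise in the $W$-variable, to a family of deterministic semilinear parabolic Dirichlet problems'' misidentifies what the stochastic flow does. In this paper (Section~\ref{Flow:subsection}) the flow is the $B$-flow $x\mapsto X^{t,x}_s$ of the forward diffusion; composing with it is what turns the SPDE into the BDSDE, not what removes the backward noise $W$. A Doss--Sussmann-type transformation that would genuinely eliminate the $d\W$-term and leave a deterministic PDE is available only in very restricted situations (essentially when $g$ is independent of $z$ and has special structure in $y$), and certainly not under the general \textbf{(MHL)} assumed here. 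Consequently the appeal to ``classical Schauder and $L^p$ parabolic theory'' on a deterministic problem is unjustified in the general case, and with it the claimed uniform-in-$W$ bound $\|v\|_{C^{1/2,1}(\bar D)}\le\zeta_L$ with all moments.

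Your alternative via Malliavin calculus --- identifying $Z_t$ with the diagonal $D_tY_t$ and propagating the regularity through the differentiated BDSDE --- is the approach that actually works and is the one used in the BDSDE regularity literature (the cited \cite{B14}, following Zhang and Bouchard--Menozzi in the BSDE case). The reductions in your first paragraph (projection argument for $\bar Z$, BDG for $\Rc(Y)^\pi_{\Sc^2}$) are correct and standard; the substance is, as you correctly flag, the Malliavin differentiability of $\Phi(\tau,X_\tau)$ and the resulting a.s.\ bounded, time-continuous version of $Z$, for which \textbf{(D)} and the ellipticity \eqref{ellipticity} are indeed the crucial inputs.
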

Combining the estimates \eqref{proxytau:BGG} and \eqref{regularite}, we finally obtain our main result, which provides an upper bound for the convergence rate of $ \text{Err}(h)_{\tau_{+}\wedge\bar{\tau}}^2$ (and thus for $\text{Err}(h)_{\tau\wedge\bar{\tau}}^2$ and $\text{Err}(h)_{T}^2$).
\begin{Theorem}\label{finalerror}
Let Assumptions \textbf{(D)}, \textbf{(MHT)}, \textbf{(MHL)} and \textbf{(MHD)}  hold. Then, for each $\varepsilon\in(0,1/2)$, there exists $C_L^\varepsilon > 0$ such that
\be 
\text{Err}(h)_{\tau_{+}\wedge\bar{\tau}}^2 \leq C_L h^{1/2} \quad \text{and} \quad \text{Err}(h)_{T}^2 \leq C_Lh^{1/2}
\ee
\end{Theorem}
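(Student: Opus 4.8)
The plan is to obtain the statement by feeding the two remaining ingredients — the path regularity of $(Y,Z)$ and the discretization error of the exit time — into the estimates \eqref{errT}--\eqref{errtau} of Proposition~\ref{properr2}, which have already absorbed all of the stochastic calculus. Concretely, Proposition~\ref{properr2} asserts that both $\text{Err}(h)_T^2$ and $\text{Err}(h)_{\tau_{+}\wedge\bar{\tau}}^2$ (hence, a fortiori, $\text{Err}(h)_{\tau\wedge\bar{\tau}}^2$) are bounded by a constant $C_L$ times $h+\Rc(Y)_{\Sc^2}^{\pi}+\Rc(Z)_{\Hc^2}^{\pi}+\E[\int_0^T\|Z_t-\bar{Z}_{\psi(t)}\|^2\,dt]+\E[\zeta_L|\tau-\bar{\tau}|+\1_{\{\bar{\tau}<\tau\}}\int_{\bar{\tau}}^{\tau}\|Z_s\|^2\,ds]$, so the whole task reduces to showing that this right-hand side is $O(h^{1/2})$.

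The first four terms are immediate: by the regularity result~\eqref{regularite} one has $\Rc(Y)_{\Sc^2}^{\pi}+\Rc(Z)_{\Hc^2}^{\pi}\le C_Lh$ and $\E[\int_0^T\|Z_t-\bar{Z}_{\psi(t)}\|^2\,dt]\le C_Lh$, so, together with the explicit $h$, these contribute only $O(h)=o(h^{1/2})$. Thus the proof comes down to bounding the exit-time remainder $\E[\zeta_L|\tau-\bar{\tau}|+\1_{\{\bar{\tau}<\tau\}}\int_{\bar{\tau}}^{\tau}\|Z_s\|^2\,ds]$ by $C_Lh^{1/2}$, and this is precisely where the rate $h^{1/2}$ is produced.

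For the first summand I would use that $|\tau-\bar{\tau}|\le T$ almost surely, together with the moment bound $\E[\zeta_L^p]\le C_L^p$ from Lemma~\ref{lemmcontrol}, H\"older's inequality, and the strong $L^1$-estimate \eqref{proxytau:BGG}, $\E[|\tau-\bar{\tau}|]\le C_Lh^{1/2}$; staying closer to Bouchard and Menozzi, one may instead write $\E[\zeta_L|\tau-\bar{\tau}|]=\E[\,\E[\zeta_L|\tau-\bar{\tau}|\mid\Fc^B_{\tau_{+}\wedge\bar{\tau}}]\,]$ and invoke the conditional exit-time estimate (as in \eqref{proxytau:faible} or its sharpening in \cite{BGG13}), which is exactly the reason the refined stopping time $\tau_{+}$ with values in $\pi$ was introduced. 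For the second summand, recall from Remark~\ref{remZ} that $Z=0$ on $]\tau,T]$, so that $\int_{\bar{\tau}}^{\tau}\|Z_s\|^2\,ds\le\int_0^{\tau}\|Z_s\|^2\,ds$, a random variable with finite moments of all orders by the a priori estimate \eqref{apriori1}; combining this integrability with the indicator $\1_{\{\bar{\tau}<\tau\}}$ via H\"older's inequality and, once more, \eqref{proxytau:BGG} gives a bound of order $h^{1/2}$, up to an arbitrarily small loss $\varepsilon$ in the exponent — which is the role of the parameter $\varepsilon\in(0,1/2)$ in the statement.

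The main obstacle is exactly this last term, $\E[\1_{\{\bar{\tau}<\tau\}}\int_{\bar{\tau}}^{\tau}\|Z_s\|^2\,ds]$: because $\|Z\|^2$ need not be pathwise bounded near $\partial\Oc$, one cannot simply pull it out against the short random interval $[\bar{\tau},\tau]$, and is forced to trade the $L^p$-integrability of $\int_0^{\tau}\|Z_s\|^2\,ds$ against the smallness of $\E[|\tau-\bar{\tau}|]$ (and against the probability and size of the overshoot region $\{\bar{\tau}<\tau\}$); it is this trade-off that caps the attainable rate at $h^{1/2}$ and accounts for the $\varepsilon$-loss. Everything else is a routine substitution, and the same chain of inequalities simultaneously yields the announced bounds for $\text{Err}(h)_T^2$, $\text{Err}(h)_{\tau\wedge\bar{\tau}}^2$ and $\text{Err}(h)_{\tau_{+}\wedge\bar{\tau}}^2$.
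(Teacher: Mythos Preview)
Your overall strategy is exactly the paper's: the authors give no detailed proof of this theorem either, writing only that one combines Proposition~\ref{properr2} with the regularity estimate \eqref{regularite} and the exit-time bound \eqref{proxytau:BGG}. Your reduction of the problem to controlling
\[
\E\big[\zeta_L|\tau-\bar{\tau}|\big]
\quad\text{and}\quad
\E\Big[\1_{\{\bar{\tau}<\tau\}}\Int_{\bar{\tau}}^{\tau}\|Z_s\|^2\,ds\Big]
\]
is correct, and your treatment of the first of these via H\"older and \eqref{proxytau:BGG} is fine.

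There is, however, a genuine gap in your argument for the second term. You propose to bound $\int_{\bar{\tau}}^{\tau}\|Z_s\|^2\,ds$ by the full integral $\int_0^{\tau}\|Z_s\|^2\,ds$ and then to recover smallness by pairing, through H\"older, with the indicator $\1_{\{\bar{\tau}<\tau\}}$ and invoking \eqref{proxytau:BGG}. This does not work: once you replace the random interval $[\bar\tau,\tau]$ by $[0,\tau]$ you have discarded all dependence on $|\tau-\bar\tau|$, so \eqref{proxytau:BGG} is no longer relevant, and the only residual smallness would have to come from $\P[\bar\tau<\tau]$. But nothing in the paper (and nothing you cite) controls this probability; in fact, since $\tau$ almost surely does not lie on the grid, $\bar\tau\neq\tau$ generically and $\P[\bar\tau<\tau]$ is of order one, not $O(h^{1/2})$. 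Your proposed H\"older step therefore yields only an $O(1)$ bound.

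The missing ingredient, which the paper silently imports from the Bouchard--Menozzi framework \cite{BM09} (and which is behind the regularity results quoted from \cite{B14}), is a \emph{pointwise} control of $Z$: under \textbf{(D)}, \textbf{(MHT)}, \textbf{(MHL)}, \textbf{(MHD)} one has $\|Z_s\|\le \zeta_L$ for some $\zeta_L$ with all moments (obtained either from the representation $Z_s=D_\sigma u(s,X_s)$ together with a gradient bound on $u$, or from a Malliavin/Bismut-type formula for $Z$). With this in hand,
\[
\1_{\{\bar{\tau}<\tau\}}\Int_{\bar{\tau}}^{\tau}\|Z_s\|^2\,ds \;\le\; \zeta_L^2\,|\tau-\bar\tau|,
\]
which folds the second term back into the first and is then handled exactly as you describe, with the $\varepsilon$-loss coming from the H\"older exponent. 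Without this pointwise bound on $Z$, the argument does not close.
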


\section{Semilinear Stochastic PDEs with Dirichlet null condition}
\label{SPDE:section}
The aim of this section is to give a Feynman-Kac's formula for the weak solution of a class of  semilinear SPDEs   with Dirichlet null condition on the boundary
via  the associated Markovian class of BDSDEs  with random terminal time studied in Section \ref{BDSDE:section}. 
Indeed, for a given open connected domain $\Oc$ of $\R^d$,  we are interested in the following semilinear SPDEs  :
\begin{equation}
\left\lbrace 
\label{SPDEdirichlet}
\begin{split}
&du_t+ \Lc  u_t \, dt +  
f(t,x,u_t,D_{\sigma}u_t )\, dt
+ g(t,x,u_t,D_\sigma u_t \,)\, d\W_t = 0 \,,
 \forall \, 0 \leq t \leq T, \; \forall x \in \Oc,\\
&u(T,x) \, = \, \Phi(x) \, ,   \quad \; \; \;  \forall x \in \Oc\\
&u(t,x)= 0 \; , \quad \quad \quad 
 \forall \, 0 \leq t \leq T, \; \forall x \in \partial \Oc,\\
 \end{split}
 \right.
\end{equation}
where  $ D_\sigma \, := \,
\, \nabla \, u \,\sigma  $ and $\mathcal L$ is the second order differential operator which is defined
 component by component with
 \begin{equation}\label{operator}
 \begin{array}{lll}
 {\mathcal L}\varphi(x)&=&\displaystyle\sum_{i=1}^{d}b^i(x)\frac{\partial}{\partial
 x_i}\varphi(x)+\frac{1}{2}\sum_{i,j=1}^{d}a^{ij}(x)\frac{\partial^2}{\partial
 x_i\partial x_j}\varphi(x)
\end{array}
\end{equation}
and $ a := \sigma \sigma^* $.

\subsection{Definitions and formulation}Let us first introduce some notations:\\
- $C^n_{l,b}(\R^p,\R^q)$ is the set of $C^n$-functions which grow at most linearly at infinity and whose partial derivatives of order less than or equal to $n$ are bounded.\\
- $\mathbf{L}^2\left( \Oc\right) $ will be a Hilbert $L^2$-space  of our framework. We employ the following notation for its scalar product and its norm,
$$ \left( u,v\right)=\Int_{\Oc}u\left( x\right) v\left(
x\right) dx,\;\left\| u\right\| _2=\left(
\int_{\Oc}u^2\left( x\right) dx\right) ^{\frac
12}. $$ 
Our evolution problem will be considered over a fixed time interval
$[0,T]$ and the norm for an element of $\mathbf{L}^2\left(
[0,T] \times \Oc\right) $ will be denoted by
$$\left\| u\right\| _{2,2}=\left(\Int_0^T  \int_{\Oc} |u (t,x)|^2 dx dt \right)^{\frac 12}. $$
We assume the following hypotheses :\\[0.3cm]
{\bf{Assumption (MHD')}}
The coefficients of the second  order differential operator $\Lc$ satisfy:
\begin{itemize}
\item $ b$ is a bounded function and belongs to $ C_{l,b}^2 (\R^d, \R^d)$.
\item $ \sigma\in C_{l,b}^3  (\R^d, \R^{k\times d})$ and satisfies the ellipticity condition \eqref{ellipticity}.
\end{itemize}   
{\bf{Assumption (MHT')}}
$\Phi \in \mathbf{L}^2(\Oc; \R^k)$ with polynomial growth, namely there exists $C>0$ and $p\in\N$ such that $|\Phi(x)|\leq C(1+|x|^p)$.\\[0.2cm]
The space of test functions which we employ in the definition of
weak solutions of the evolution equations  \eqref{SPDEdirichlet} is $
\mathcal{D}  := \mathcal{C}^{\infty} (\left[0,T]\right) \otimes
\mathcal{C}_c^{\infty} \left(\Oc\right)$, where
$\mathcal{C}^{\infty} \left([0,T]\right)$ denotes the space of real
functions which can be extended as infinite differentiable functions
in the neighborhood of $[0,T]$ and $
\mathcal{C}_c^{\infty}\left(\Oc\right)$ is the space of
infinite differentiable functions with compact support in
$\Oc$.  We denote by  $ {\mathcal H} $ the space of  $ \Fc_{t,T}^W$-progressively measurable  processes  $(u_t) $ with values  in the Sobolev space $ H_0 ^1 (\Oc)$ where 
$$ H_0^1 (\Oc):=\{v \in \mathbf{L} ^2(\Oc) \; \big|\; \nabla v\sigma\in \mathbf{L}^2(\Oc))\} $$
endowed with the norm
$$\begin{array}{ll}
\|u\|_{{\mathcal H}}^2=
 \E \,  \big[\underset{ 0 \leq s \leq T}{\Sup} \|u_s \|_2^2 +   \Int_{\Oc} \Int_0^T  |\nabla
u_s (x)\sigma(x)|^2 dsdx \big],
\end{array}
$$
where we denote the gradient by $\nabla u (t,x) = \big(\partial_1 u
(t,x), \cdot \cdot \cdot, \partial_d u (t,x) \big)$.

\begin{Definition}
\label{SPDEdefinition}
We say that $u \in  \Hc $ is a weak solution of the SPDE  \eqref{SPDEdirichlet} if 
the following relation holds for each $ \Psi \in \Dc$, 
\begin{equation} \label{variationnelformu}
\begin{split}
& \int_{t}^{T} \int_{\Oc} u (s,x)  \,  
\partial_{s} \Psi  (s,x)   \, dx \,ds
- \int_{\Oc} \Phi(x) \Psi(T,x) dx  +  \int_{\Oc}  u(t,x) \, \Psi(t,x) \; dx   - \int_t ^T  \int_{\Oc}  u (s,x) \Lc^*  u (s,x) \, dx ds  \\
&
=  \int_{t}^{T} \int_{\Oc}\Psi(s,x) \, f(s, x ,u(s,x),D_{\sigma} u(s,x)) \,  dx \, ds +
\int_{t}^{T} \int_{\Oc} \, \Psi (s,x) \, g(s, x ,u(s,x),D_{\sigma} u(s,x))\, dx\, d\W_{s} .
\end{split}
\end{equation}
where 
 $$  \big(u (s, \cdot),  \Lc^*  \Psi  (s,\cdot) \big)  :=   \int_{\Oc}  D_{\sigma} u  (s,x)  \, D_{\sigma}  \Psi (s,x) \,  dx 
+  \int_{\Oc} u (s,x)  \, div ( \, ( b - \tilde{A})\,\Psi (s,x)  ) 
\,  dx , $$ and $\displaystyle  \tilde{A}_{i}  \, =: \, \frac{1}{2} \, \sum_{k=1}^{d}
\frac{\partial a_{k,i}}{\partial x_{k}}.$

\end{Definition}

The existence  and uniqueness  of weak solution for  such SPDEs with null Dirichlet condition is ensured  by Denis and Stoica (Theorem $4$ in \cite{DS04}). Indeed, we can rewrite the second order differential operator $\Lc$  as following:
 \begin{equation}
 \label{operatordivergence}
 \begin{split}
 {\mathcal L} & =  \frac{1}{2}\sum_{i,j=1}^{d}  \partial_{i} \big(a^{ij}(x) \partial_{j} \big) + \displaystyle\sum_{i=1}^{d} \big(b^i(x) -   \frac{1}{2} \partial_{i} a^{ij}(x)  \, \big)\,  \partial_i   .\\
\end{split}
\end{equation}
Therefore,  since $ b$ and $ \nabla a$ are bounded, the second term in the right hand side of   \eqref{operatordivergence} may be considered as an extra term in the nonlinear term coefficient $f$ which still satisfies the uniform Lipschitz continuous condition in $ u $ and $ D_{\sigma} u$.

 Motivated by developing  Euler numerical  scheme for  such solution,  we are  now interested  in giving the probabilistic interpretation for the solution of SPDEs \eqref{SPDEdirichlet}   within the framework of BDSDE with random terminal time. Thus, this connection between SPDEs and BDSDEs will be established by means of  stochastic flow technics. 
 \subsection{Stochastic flow of diffeomorphism and random test functions}
\label{Flow:subsection} We are concerned in this paper with solving SPDEs by developing a stochastic flow method which was first introduced in Kunita \cite{K84}, and Bally, Matoussi
\cite{BM01}. We recall that  $\{X_s^{t,x}, t\leq s\leq T\}$ is the diffusion process starting from $x$ at time $t$ and is the strong solution  of the equation:
 \begin{equation}\label{sde}
X_s^{t,x}=x+\Int_{t}^{s}b(X_r^{t,x})dr+\Int_{t}^{s}\sigma(X_r^{t,x})dB_r.
\end{equation}
The existence and uniqueness of this solution was proved in Kunita \cite{K84}. Moreover, we have the following properties:
\begin{Proposition}\label{estimatesde}
For each $t>0$, there exists a version of $\{X_s^{t,x});\,x\in
\R^d,\,s\geq t\}$ such that $X_s^{t,\cdot}$ is a $C^2(\R^d)$-valued
continuous process which satisfies the flow property: $ X_r^{t,x}= X_r^{s,x} \circ  X_s^{t,x}$, $0\leq t<s<r$.
Furthermore,
for all $p\geq 2$, there exists $M_p$ such that for all $0\leq t<s$,
$x,x'\in\R^d$, $h,h'\in\R\backslash{\{0\}}$,
$$\begin{array}{ll}
\E(\underset{t\leq r\leq s}{\Sup}|X_r^{t,x}-x|^p)\leq
M_p(s-t)(1+|x|^p),\\
\E(\underset{t\leq r\leq s}{\Sup}|X_r^{t,x}-X_r^{t,x'}-(x-x')|^p)\leq
M_p(s-t)(|x-x'|^p),\\
\E(\underset{t\leq r\leq s}{\Sup}|\Delta_h^i[X_r^{t,x}-x]|^p)\leq
M_p(s-t),\\
\E(\underset{t\leq r\leq s}{\Sup}|\Delta_h^i X_r^{t,x}-\Delta_{h'}^i
X_r^{t,x'}|^p)\leq M_p(s-t)(|x-x'|^p+|h-h'|^p),
\end{array}$$
where $\Delta_h^ig(x)=\frac{1}{h}(g(x+he_i)-g(x))$, and
$(e_1,\cdots,e_d)$ is an orthonormal basis of $\R^d$.
\end{Proposition}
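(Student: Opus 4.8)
The plan is to take the existence of a regular version of the flow and the flow identity from the classical theory of stochastic flows of diffeomorphisms (Kunita \cite{K84}, see also Bally and Matoussi \cite{BM01}), and then to derive the four moment estimates by repeated use of the Burkholder--Davis--Gundy (BDG) inequality, H\"older's inequality and Gronwall's lemma, relying on the regularity in Assumption \textbf{(MHD')}: $b$ is bounded, $b\in C^2_{l,b}$ and $\sigma\in C^3_{l,b}$, so $b,\sigma$ are globally Lipschitz with linear growth and have bounded first and second derivatives. The flow property $X_r^{t,x}=X_r^{s,x}\circ X_s^{t,x}$ for $t<s<r$ follows from pathwise uniqueness for \reff{sde}: both $X_\cdot^{t,x}$ and $X_\cdot^{s,X_s^{t,x}}$ solve \reff{sde} on $[s,\infty)$ with the same value $X_s^{t,x}$ at time $s$. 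The existence of a $C^2(\R^d)$-valued continuous modification of $x\mapsto X_s^{t,x}$ follows from Kolmogorov's continuity criterion applied to the first- and second-order difference quotients of $X_s^{t,\cdot}$: the required moment bounds on the first-order quotients are the third and fourth displayed estimates below, and those on the second-order quotients are obtained by one further iteration of the same argument. So I would concentrate on the four displayed inequalities, which I read as holding for $0\leq t\leq s\leq T$ (so $M_p$ may depend on $T$).

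First I would record two routine preliminary bounds, valid for every $q\geq 2$ and $0\leq t\leq s\leq T$:
$$\E\big[\underset{t\leq r\leq s}{\Sup}|X_r^{t,x}|^q\big]\leq C_q(1+|x|^q)\quad\text{and}\quad \E\big[\underset{t\leq r\leq s}{\Sup}|X_r^{t,x}-X_r^{t,x'}|^q\big]\leq C_q|x-x'|^q,$$
both obtained from \reff{sde} via BDG, the linear growth (resp. Lipschitz continuity) of $b,\sigma$ and Gronwall. For the first displayed estimate, writing $X_r^{t,x}-x=\Int_t^r b(X_u^{t,x})\,du+\Int_t^r\sigma(X_u^{t,x})\,dB_u$, taking $\underset{t\leq r\leq s}{\Sup}$, the $p$-th power and expectation, I would bound the drift part by H\"older and the boundedness of $b$ ($(\Int_t^s|b(X_u^{t,x})|\,du)^p\leq(s-t)^{p-1}\Int_t^s|b(X_u^{t,x})|^p\,du$) and the diffusion part by BDG and H\"older ($\leq C_p(s-t)^{p/2-1}\Int_t^s\E\|\sigma(X_u^{t,x})\|^p\,du$), then use the linear growth of $\sigma$ and the preliminary $L^p$ bound; since $p\geq 2$, $(s-t)^p+(s-t)^{p/2}\leq C_{p,T}(s-t)$ on $[0,T]$, which produces the factor $(s-t)$. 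The second displayed estimate has the same structure: $\Delta_r:=X_r^{t,x}-X_r^{t,x'}-(x-x')$ vanishes at $r=t$ and solves the SDE with drift $b(X_u^{t,x})-b(X_u^{t,x'})$ and diffusion $\sigma(X_u^{t,x})-\sigma(X_u^{t,x'})$; Lipschitz continuity, $|X_u^{t,x}-X_u^{t,x'}|\leq|\Delta_u|+|x-x'|$, BDG/H\"older and Gronwall then give $\E[\underset{t\leq r\leq s}{\Sup}|\Delta_r|^p]\leq M_p(s-t)|x-x'|^p$. The third displayed estimate is then free: taking $x'=x+he_i$ in the second estimate and dividing by $|h|^p$, since $\Delta_h^i[X_r^{t,x}-x]=h^{-1}(X_r^{t,x+he_i}-X_r^{t,x}-he_i)$ one gets $\E[\underset{t\leq r\leq s}{\Sup}|\Delta_h^i[X_r^{t,x}-x]|^p]\leq M_p(s-t)$, uniformly in $h\neq 0$.

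The fourth estimate is the main obstacle, and the only place where the bounded first and second derivatives of $b$ and $\sigma$ are genuinely used. The idea is to linearise the difference-quotient process: for $u\geq t$ the mean value theorem gives
$$b(X_u^{t,x+he_i})-b(X_u^{t,x})=h\,B_h(u)\,\Delta_h^i X_u^{t,x},\qquad B_h(u):=\Int_0^1\nabla b\big((1-\lambda)X_u^{t,x}+\lambda X_u^{t,x+he_i}\big)\,d\lambda,$$
and similarly $\Sigma_h(u)$ built from $\nabla\sigma$, so that $\Delta_h^i X_r^{t,x}=e_i+\Int_t^r B_h(u)\Delta_h^i X_u^{t,x}\,du+\Int_t^r\Sigma_h(u)\Delta_h^i X_u^{t,x}\,dB_u$ is a linear SDE with coefficients bounded by $\|\nabla b\|_\infty$ and $\|\nabla\sigma\|_\infty$. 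For $D_r:=\Delta_h^i X_r^{t,x}-\Delta_{h'}^i X_r^{t,x'}$ (which vanishes at $r=t$, both terms being $e_i$) I would split each integrand as $B_h(u)D_u+(B_h(u)-B_{h'}(u))\Delta_{h'}^i X_u^{t,x'}$, and analogously for $\Sigma$. The Lipschitz continuity of $\nabla b,\nabla\sigma$ (bounded $\nabla^2 b,\nabla^2\sigma$), the preliminary Lipschitz bound on $X$ and $|x+he_i-(x'+h'e_i)|\leq|x-x'|+|h-h'|$ give, in $L^q$ for every $q$, $\|B_h(u)-B_{h'}(u)\|_{L^q}\leq C(|x-x'|+|h-h'|)$ uniformly in $h,h'\neq 0$, while the third estimate gives $\sup_u\|\Delta_{h'}^i X_u^{t,x'}\|_{L^q}\leq C_q$. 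Feeding this into BDG, H\"older and Cauchy--Schwarz and then running Gronwall on $\phi(v):=\E[\underset{t\leq r\leq v}{\Sup}|D_r|^p]$ yields $\phi(s)\leq M_p(s-t)(|x-x'|^p+|h-h'|^p)$, the $(s-t)$ factor again coming from the H\"older bounds on the drift integrals. The points I expect to be delicate are (i) keeping the $h$-dependence and the base-point dependence of the coefficients $B_h,\Sigma_h$ separated, so that the final constant $M_p$ is uniform in $h,h'$, and (ii) making sure every time integral contributes a genuine power of $(s-t)$, and not merely a bounded factor, so that all four bounds degenerate as $s\downarrow t$.
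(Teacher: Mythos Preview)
Your argument is correct and is essentially the standard route to these estimates. Note, however, that the paper does not give its own proof of this proposition: it is stated as a classical result, with the existence of a $C^2$ version, the flow property, and the moment bounds simply quoted from Kunita \cite{K84} (these estimates also appear in Bally--Matoussi \cite{BM01}). So there is no ``paper's proof'' to compare to beyond the reference.

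What you have written is exactly the computation behind those references: BDG + H\"older + Gronwall for the first two estimates, the third obtained from the second by scaling, and for the fourth a linearisation of the difference quotient via the mean-value representation of $b(X^{t,x+he_i})-b(X^{t,x})$ (and similarly for $\sigma$), exploiting the boundedness of $\nabla^2 b$, $\nabla^2\sigma$ implied by Assumption \textbf{(MHD')}. Your care about the two delicate points---uniformity in $h,h'$ of the constants and the genuine factor $(s-t)$ after Gronwall---is well placed; both go through because the linearised coefficients $B_h,\Sigma_h$ are bounded uniformly by $\|\nabla b\|_\infty,\|\nabla\sigma\|_\infty$, and because the time integrals always carry at least a factor $(s-t)^{p/2}$, which on $[0,T]$ is controlled by $(s-t)$ for $p\ge 2$. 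The only cosmetic remark is that your reading ``$M_p$ may depend on $T$'' is indeed the intended one in the paper's context.
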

Under regular conditions Assumption {\bf{(MHD')}}  on the diffusion, it is known that the stochastic flow associated to a continuous SDE satisfies the homeomorphic property (see Kunita \cite{K84}). We have  the following result
where the proof can be found in  \cite{K84}.
\begin{Proposition}\label{flow}
Let  Assumption {\bf{(MHD')}} holds. Then
$\{X_s^{t,x}; x \in \mathbb{R}^d \}$ is a $C^2$-diffeomorphism a.s.
stochastic flow. Moreover the inverse of the flow which denoted by  $ \{X_{t,s}^{-1} (y); y \in \mathbb{R}^d \} $  satisfies the
following backward SDE
\begin{equation}\label{inverse:flow}
\begin{split}
 X_{t,s}^{-1}(y) &  = y - \Int_t^s \widehat{b}(X_{r,s}^{-1}(y)) dr  -
\Int_t^s \sigma (X_{r,s}^{-1} (y)) d\B_r
\end{split}
\end{equation}
for any  $t<s$,  where
\begin{equation}
\label{drift:backward}
\begin{split}
\widehat{b}(x) =  b(x) -  \sum_{i,j}\frac{\partial \sigma^j (x)
}{\partial x_i}  \sigma^{ij} (x) .
\end{split}
\end{equation}
\end{Proposition}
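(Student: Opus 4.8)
The plan is to follow Kunita's classical construction of the flow of diffeomorphisms generated by a (non-degenerate) SDE, in three stages: (i) produce a $C^2$ modification of the forward map $x\mapsto X^{t,x}_s$; (ii) introduce the candidate inverse flow as the solution of the backward SDE \eqref{inverse:flow}; (iii) verify by a generalized It\^o (It\^o--Wentzell) computation that the two maps are mutual inverses, which yields simultaneously the diffeomorphism property and the identity \eqref{inverse:flow}.

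\emph{Stage (i).} Differentiating \eqref{sde} formally in $x$, the Jacobian $J^{t,x}_s:=\nabla_x X^{t,x}_s$ solves a linear matrix SDE whose coefficients are built from $\nabla b$ and $\nabla\sigma$ evaluated along $X^{t,x}$, with $J^{t,x}_t=I$; its inverse $K^{t,x}_s:=(J^{t,x}_s)^{-1}$ solves the companion linear SDE obtained from It\^o's formula for matrix inverses, and the second spatial derivative $\nabla^2_x X^{t,x}_s$ solves a further linear SDE driven by $J$ and by $\nabla^2 b,\nabla^2\sigma$. Under \textbf{(MHD')} all of $\nabla b,\nabla\sigma,\nabla^2 b,\nabla^2\sigma$ are bounded (the extra derivative of $\sigma$ being needed to make $\nabla^2_x X$ continuous), so $J$, $K$ and $\nabla^2_x X$ have moments of every order. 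Feeding these bounds together with the increment estimates of Proposition \ref{estimatesde} into Kolmogorov's continuity criterion (applied to the $\R^d$-indexed fields $x\mapsto X^{t,x}_s$ and their difference quotients, keeping track of the polynomial weights in $x$) produces a modification for which $x\mapsto X^{t,x}_s$ is $C^2$ with everywhere invertible Jacobian; pathwise uniqueness for \eqref{sde} then gives the flow identity $X^{t,x}_r=X^{s,\cdot}_r\circ X^{t,x}_s$.

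\emph{Stages (ii)--(iii).} Fix $s$. Reversing the driving noise in time (so that the backward It\^o integral $d\B_r$ becomes a forward integral in the reversed clock), equation \eqref{inverse:flow} is a standard SDE whose coefficients $\widehat b,\sigma$ inherit the regularity of $b,\sigma$ under \textbf{(MHD')}; it therefore admits a unique strong solution $y\mapsto X^{-1}_{r,s}(y)$, which by the argument of Stage (i) applied to the reversed equation is $C^2$ in $y$ with invertible Jacobian. To see that it is the inverse of $X^{t,\cdot}_s$, fix $y$ and consider the process $r\mapsto X^{r,\cdot}_s\bigl(X^{-1}_{r,s}(y)\bigr)$ on $[t,s]$: here $x\mapsto X^{r,x}_s$ is, for varying $r$, a $C^2$ random field which (via the flow identity) is a backward semimartingale in $r$ with coefficients read off from \eqref{sde}, while $r\mapsto X^{-1}_{r,s}(y)$ is the backward It\^o process \eqref{inverse:flow}. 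Applying the generalized It\^o formula to this composition, the first-order spatial term absorbs the drift and diffusion of $X^{-1}_{\cdot,s}(y)$, and the second-order spatial term together with the quadratic covariation between $\nabla_x X^{r,\cdot}_s$ and the $d\B$-part of $X^{-1}_{\cdot,s}(y)$ collapse to zero; it is exactly this cancellation that forces the drift in \eqref{inverse:flow} to be $-\widehat b$ rather than $-b$, the difference $b-\widehat b$ in \eqref{drift:backward} being the It\^o--Stratonovich correction of the backward-integral calculus. Consequently $r\mapsto X^{r,\cdot}_s(X^{-1}_{r,s}(y))$ is constant, equal to its value $y$ at $r=s$, so $X^{t,\cdot}_s\circ X^{-1}_{t,s}=\mathrm{id}_{\R^d}$; the reverse composition follows symmetrically (or from the elementary fact that a $C^2$ self-map of $\R^d$ with everywhere invertible differential that is onto and admits a left inverse is a diffeomorphism). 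Hence $x\mapsto X^{t,x}_s$ is a.s.\ a $C^2$-diffeomorphism whose inverse $X^{-1}_{t,s}$ solves \eqref{inverse:flow}.

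The delicate point is Stage (iii): the It\^o--Wentzell computation for a semimartingale random field composed with a backward-It\^o process has to be set up carefully, and it is precisely the bookkeeping of the covariation (Stratonovich-correction) terms that produces the modified drift $\widehat b$ of \eqref{drift:backward}. The accompanying technical nuisances are placing the backward It\^o integral and its time-reversal on a rigorous footing and propagating the $\R^d$-uniform $C^2$ regularity through Kolmogorov's criterion with the polynomial growth in Proposition \ref{estimatesde}; all of this is carried out in detail in Kunita \cite{K84}, which the statement cites.
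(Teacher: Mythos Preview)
Your proposal is a correct and well-organized outline of Kunita's classical argument, and in particular you have correctly identified the key mechanism: the It\^o--Wentzell computation on the composition $r\mapsto X^{r,\cdot}_s(X^{-1}_{r,s}(y))$, and the fact that the Stratonovich-type correction arising from the covariation between $\nabla_x X^{r,\cdot}_s$ and the backward diffusion part of $X^{-1}_{\cdot,s}(y)$ is precisely what forces the modified drift $\widehat b$ in \eqref{drift:backward}.

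As for the comparison: the paper does not actually supply a proof of this proposition. It simply states that ``the proof can be found in \cite{K84}'' and moves on. So there is nothing to compare against beyond the citation itself; your sketch is a faithful summary of what that reference contains, and in fact goes further than the paper does in explaining \emph{why} the backward drift is $\widehat b$ rather than $b$. If anything, you might flag explicitly that the proposition is quoted from Kunita rather than proved in the paper, since your write-up reads as though the paper had given an argument to match.
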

We denote by $J(X_{t,s}^{-1}(x))$ the determinant of the Jacobian
matrix of $X_{t,s}^{-1}(x)$, which is positive and
$J(X_{t,t}^{-1}(x))=1$. For $\varphi\in C_c^{\infty}(\R^d)$, we define
a process $\varphi_t:\,\Omega\times [t,T]\times \R^d\rightarrow \R^k$ by
\begin{equation}
\label{random:testfunction}
\varphi_t(s,x):=\varphi(X_{t,s}^{-1}(x))J(X_{t,s}^{-1}(x)).
\end{equation}
We know that for $v\in \mathbf{L}^2(\R^d)$, the composition of $v$
with the stochastic flow is
$$(v\circ X_s^{t, \cdot } , \varphi):=(v,\varphi_t(s,\cdot)).$$
In fact, by a change of variable, we have (see Kunita \cite{K94a}, Bally and Matoussi \cite{BM01}) $$(v\circ X_s^{t, \cdot },
\varphi)=\Int_{\R^d}v(X_s^{t, x})\varphi(x)dx=\Int_{\R^d}v(y)\varphi(X_{t,s}^{-1}(y))J(X_{t,s}^{-1}(y))dy
=(v,\varphi_t(s,\cdot)).$$ Since  $ (\varphi_t(s,x))_{ t\leq s}$ is a
process,  we may not use it directly as a test function because \\
$\Int_t^T(u(s,\cdot),\partial_s\varphi_t(s,\cdot))ds$ has no sense. However
$\varphi_t(s,x)$ is a semimartingale and we have the following
decomposition of $\varphi_t(s,x)$
\begin{Lemma}\label{decomposition}
For every function $\varphi\in C_c^{\infty}(\R^d),$
\begin{equation}\label{decomp}\begin{array}{ll}
\varphi_t(s,x)&=\varphi(x)+\displaystyle\int_t^s{\mathcal
L}^\ast\varphi_t(r,x)dr-\sum_{j=1}^{d}\int_t^s\left(\sum_{i=1}^{d}\frac{\partial}{\partial
x_i}(\sigma^{ij}(x)\varphi_t(r,x))\right)dW_r^j,
 \end{array} 
 \end{equation}
where ${\mathcal L}^\ast$ is the adjoint operator of ${\mathcal L}$.
\end{Lemma}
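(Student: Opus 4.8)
The plan is to prove \eqref{decomp} first in weak form --- after pairing both sides with an arbitrary smooth test function --- and then to recognise it as the announced $\mathbf{L}^2(\R^d)$-valued identity; this is the route of Kunita \cite{K84} and Bally--Matoussi \cite{BM01}. Fix $\varphi\in C_c^\infty(\R^d)$ and let $v\in C_c^\infty(\R^d)$ be arbitrary. First I would apply the classical It\^o formula to $s\mapsto v(X_s^{t,x})$ along the forward flow \eqref{sde}:
\[
v(X_s^{t,x})=v(x)+\Int_t^s \Lc v(X_r^{t,x})\,dr+\Sum_{i,j}\Int_t^s \partial_i v(X_r^{t,x})\,\sigma^{ij}(X_r^{t,x})\,dW_r^j,\qquad t\le s\le T,
\]
where $\Lc$ is the operator \eqref{operator}.

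Next I would multiply this identity by $\varphi(x)$ and integrate it over $\R^d$, exchanging the spatial integral with the $dr$-integral (ordinary Fubini) and with the $dW^j$-integrals (stochastic Fubini); both exchanges are legitimate thanks to the compact support of $\varphi$ and the moment bounds of Proposition \ref{estimatesde}. Then, performing for each $r$ the change of variable $y=X_r^{t,x}$ --- which is allowed because, under Assumption \textbf{(MHD')}, $X_{t,r}^{-1}$ is a $C^1$-diffeomorphism with positive Jacobian $J$ (Proposition \ref{flow}) --- and recognising it as the identity $(\psi\circ X_r^{t,\cdot},\varphi)=(\psi,\varphi_t(r,\cdot))$ recalled above, with $\varphi_t$ as in \eqref{random:testfunction}, applied successively to $\psi=v,\ \Lc v,\ \partial_i v\,\sigma^{ij}$, I obtain
\[
\big(v,\varphi_t(s,\cdot)\big)=\big(v,\varphi\big)+\Int_t^s\big(\Lc v,\varphi_t(r,\cdot)\big)\,dr+\Sum_{i,j}\Int_t^s\big(\partial_i v\,\sigma^{ij},\varphi_t(r,\cdot)\big)\,dW_r^j .
\]
Since $\varphi$ is compactly supported and $X_{t,r}^{-1}$ is a homeomorphism, $\varphi_t(r,\cdot)$ is compactly supported, so integrating by parts gives $\big(\partial_i v\,\sigma^{ij},\varphi_t(r,\cdot)\big)=-\big(v,\partial_i(\sigma^{ij}\varphi_t(r,\cdot))\big)$, whereas $\big(\Lc v,\varphi_t(r,\cdot)\big)=\big(v,\Lc^*\varphi_t(r,\cdot)\big)$ by the definition of the adjoint. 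The right-hand side above is then exactly that of \eqref{decomp} tested against $v$.

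Finally, both sides of the resulting identity are continuous $\mathbf{L}^2(\R^d)$-valued processes in $s$; letting $v$ range over a countable dense subset of $C_c^\infty(\R^d)$ and using linearity and continuity in $v$, the identity holds, outside a single $\P$-null set, for all $s\in[t,T]$ and all $v$ simultaneously, which is precisely \eqref{decomp}. The step I expect to demand the most care is the bookkeeping behind the two Fubini exchanges and the integration by parts against $\varphi_t(r,\cdot)$: one has to exploit the pathwise $C^2$-regularity of $x\mapsto X_s^{t,x}$ and the $p$-th moment estimates of the flow from Proposition \ref{estimatesde}, and to read $\Lc^*\varphi_t(r,\cdot)$ in the weak ($H^1\to H^{-1}$) sense underlying Definition \ref{SPDEdefinition}. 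A more computational alternative would be to apply It\^o's formula directly to the product $\varphi(X_{t,s}^{-1}(x))\,J(X_{t,s}^{-1}(x))$, using the backward equation \eqref{inverse:flow} for $X_{t,s}^{-1}$ together with the linear SDE satisfied by $J$, and then to check that the resulting finite-variation part coincides with $\Lc^*\varphi_t$; this is feasible but heavier, so I would favour the duality argument above. \ep
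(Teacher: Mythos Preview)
The paper does not actually prove Lemma~\ref{decomposition}; it is stated without proof as a known result inherited from Kunita \cite{K84} and Bally--Matoussi \cite{BM01}, and is immediately followed by the equivalence-of-norms proposition. So there is no ``paper's own proof'' to compare against.

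Your argument is correct and is precisely the duality argument used in \cite{BM01}: apply It\^o's formula to $v(X_s^{t,x})$, integrate against $\varphi(x)\,dx$, swap integrals via (stochastic) Fubini, change variables $y=X_r^{t,x}$ to recognise the pairing $(\,\cdot\,,\varphi_t(r,\cdot))$, and integrate by parts to obtain $\Lc^*$ and the divergence term. The care you flag for Fubini and for the regularity needed to integrate by parts against $\varphi_t(r,\cdot)$ is exactly the right concern, and is handled by the flow estimates of Proposition~\ref{estimatesde} and the $C^2$-diffeomorphism property of Proposition~\ref{flow}. The alternative route you mention---applying It\^o directly to $\varphi(X_{t,s}^{-1}(x))J(X_{t,s}^{-1}(x))$ via the backward SDE \eqref{inverse:flow}---is Kunita's original computation; both lead to the same decomposition.

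One notational caveat worth recording: the forward diffusion \eqref{sde} is driven by the Brownian motion $B$, not $W$, so the martingale part in your It\^o expansion of $v(X_s^{t,x})$ should carry $dB_r^j$. The appearance of $dW_r^j$ in the statement of Lemma~\ref{decomposition} is a typographical slip in the paper itself (compare with \eqref{inverse:flow}, where the inverse flow is driven by $\B$); you have simply inherited it. The argument is unaffected.
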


We also  need  equivalence of norms result  which plays an important role in the proof of the existence of the solution for SPDE as a connection between the functional norms and random norms.
For continuous SDEs, this result  was first proved by Barles and Lesigne \cite{BL97} by
using an analytic method and Bally and Matoussi \cite{BM01} by a probabilistic method. 
\begin{Proposition}\label{equivalence:normes}
There exists two constants $c>0$ and $C>0$ such that for every
$t\leq s\leq T$ and $\varphi\in L^1(\R^d)$,
\begin{equation}\label{equi1} c\Int_{\R^d}|\varphi(x)|dx\leq
\Int_{\R^d} \E(|\varphi(X_s^{t, x})|) dx\leq
C\Int_{\R^d}|\varphi(x)| dx. 
\end{equation} 
Moreover, for
every $\Psi\in L^1([0,T]\times\R^d)$,
\begin{equation} \label{equi2}
c\Int_{\R^d}\Int_t^T|\Psi(s,x)|ds dx \leq
\Int_{\R^d}\Int_t^T \E(|\Psi(s,X_s^{t, x })|)ds dx\leq
C\Int_{\R^d}\Int_t^T|\Psi(s,x)|ds dx.
\end{equation}
\end{Proposition}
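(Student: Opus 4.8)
The plan is to follow the probabilistic method of Bally and Matoussi \cite{BM01}, based on a change of variables along the stochastic flow of diffeomorphisms. It suffices to prove the spatial estimate \eqref{equi1}: the space-time estimate \eqref{equi2} then follows by applying \eqref{equi1} to $\varphi=\Psi(s,\cdot)$ for almost every fixed $s\in[t,T]$ and integrating in $s$ (Fubini), the constants $c,C$ being independent of $s$ and $t$. Writing $\varphi=\varphi^{+}-\varphi^{-}$, and approximating if convenient by continuous compactly supported functions, one further reduces to the case $\varphi\geq 0$.

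For the reduction step, I would use Proposition \ref{flow}: for $\P$-almost every $\omega$ the map $x\mapsto X_s^{t,x}$ is a $C^2$-diffeomorphism of $\R^d$ with inverse $y\mapsto X_{t,s}^{-1}(y)$ and positive Jacobian determinant $J(X_{t,s}^{-1}(y))$, so that pathwise
$$\int_{\R^d}\varphi\big(X_s^{t,x}\big)\,dx=\int_{\R^d}\varphi(y)\,J\big(X_{t,s}^{-1}(y)\big)\,dy .$$
Taking expectations and using Fubini's theorem (the integrands are nonnegative) gives
$$\int_{\R^d}\E\big[\varphi(X_s^{t,x})\big]\,dx=\int_{\R^d}\varphi(y)\,\E\big[J(X_{t,s}^{-1}(y))\big]\,dy,$$
so \eqref{equi1} is equivalent to the existence of constants $0<c\leq C<\infty$, independent of $y\in\R^d$ and of $t\leq s\leq T$, with $c\leq\E[J(X_{t,s}^{-1}(y))]\leq C$.

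To obtain these two-sided bounds I would differentiate the flow equation in the spatial variable: the Jacobian matrix solves a \emph{linear} matrix SDE whose coefficients are built from the first derivatives of $b$ and $\sigma$, which are bounded by Assumption \textbf{(MHD')} ($b\in C^2_{l,b}$, $\sigma\in C^3_{l,b}$). Itô's formula applied to the determinant (the Liouville-type identity for stochastic flows, cf.\ Kunita \cite{K84}) represents $J(X_{t,s}^{-1}(y))$, and likewise the determinant of the forward Jacobian, as a stochastic exponential $\exp(A_{t,s}+M_{t,s})$ with $A$ absolutely continuous and $M$ a continuous martingale. Once the exponent is controlled by $|A_{t,s}|\leq C(s-t)\leq CT$ and $\langle M\rangle_{s}-\langle M\rangle_{t}\leq C(s-t)\leq CT$, writing $e^{M}=e^{\frac12\langle M\rangle}\mathcal{E}(M)$ and using $\E[\mathcal{E}(M)]=1$ yields $\sup\E[e^{\pm(A_{t,s}+M_{t,s})}]\leq C$; this gives the upper bound on $\E[J(X_{t,s}^{-1}(y))]$ directly and the lower bound via Jensen's inequality applied to $u\mapsto 1/u$. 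The flow composition property $X_r^{t,x}=X_r^{s,x}\circ X_s^{t,x}$ of Proposition \ref{estimatesde} and the backward representation \eqref{inverse:flow} of the inverse flow are what one uses to make these manipulations rigorous.

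The main obstacle is precisely the uniform control (in the spatial base point and in $t\leq s\leq T$) of the moments of $J(X_{t,s}^{-1}(y))$ and of its reciprocal: since $\sigma$ is only of linear growth — only its derivatives are assumed bounded — the exponent of the stochastic exponential attached to the \emph{inverse} flow need not be bounded pathwise, and one cannot simply replace the random evaluation point of a Jacobian by a supremum over the state space. Handling this requires the careful decomposition of $\log J$ into a drift part and a controlled-bracket martingale part together with the flow identities above (and, in Bally--Matoussi, a localization argument); this is the technical heart of the result, for which one may alternatively invoke \cite{BM01} or the analytic proof of Barles and Lesigne \cite{BL97}.
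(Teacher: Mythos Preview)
The paper does not give its own proof of this proposition: it is stated as a known result, with the sentence preceding it attributing the analytic proof to Barles and Lesigne \cite{BL97} and the probabilistic proof to Bally and Matoussi \cite{BM01}. Your sketch is precisely the Bally--Matoussi probabilistic argument (change of variables along the stochastic flow, reduction to two-sided moment bounds on the Jacobian determinant via the Liouville-type formula), so you are reproducing one of the two proofs the paper cites rather than offering something different.

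Your outline is sound. One small remark: for the lower bound you invoke Jensen on $u\mapsto 1/u$, which gives $\E[J(X_{t,s}^{-1}(y))]\geq 1/\E[J(X_{t,s}^{-1}(y))^{-1}]$; this reduces to an \emph{upper} bound on the expected reciprocal, i.e.\ on the expected Jacobian of the \emph{forward} flow, for which the same stochastic-exponential argument applies symmetrically. You correctly identify the only delicate point --- uniform (in $y$ and $t\leq s\leq T$) control of $\E[J^{\pm 1}]$ when $\sigma$ is of linear growth --- and rightly note that this is handled in \cite{BM01}; under Assumption \textbf{(MHD')} the derivatives of $b$ and $\sigma$ are bounded, so the drift and bracket of $\log J$ are bounded by $C(s-t)\leq CT$ uniformly in the base point, and the argument goes through.
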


 We give now the following result which allows us to link by a natural way the solution of SPDE with the associated BDSDE.
 Roughly speaking, if we choose in the variational formulation \eqref{variationnelformu} the random functions $\varphi_t(\cdot,\cdot)$
 defined by \eqref{random:testfunction}, as a test functions, then we obtain the  associated BDSDE.
 In fact, this result plays the same role as It\^o's formula used in \cite{pp1994} 
 to relate the solution of some semilinear SPDEs with the associated BDSDEs:
\begin{Proposition}
\label{weak:Itoformula1} Let  Assumptions  {\bf{(MHT')}},  {\bf{(MHL)}} and  {\bf{(MHD')}} hold and $u\in {\mathcal H}$ be a weak solution of the
SPDE \eqref{variationnelformu} associated to $(\Phi,f,g)$ on the whole domain $ \mathbb R^d$, 
then for $s\in[t,T]$ and $\varphi \in
C_c^{\infty}(\R^d)$, 
\begin{equation} \label{weakvariationnelformu}
\begin{split}
&\Int_{\R^d}\int_s^T\!u(r,x)d\varphi_t(r,x)dx+(u(s,\cdot),\varphi_t(s,\cdot))-(\Phi(\cdot),\varphi_t(T,\cdot))
-\Int_{\R^d}\int_s^T\! u(r,x)\mathcal L^\ast \varphi_t(r,x))drdx\\
&=\Int_{\R^d}\int_s^Tf_r(x,u(r,x),
D_{\sigma} u(r,x))\varphi_t(r,x)drdx+\Int_{\R^d}\Int_s^T\! g_r(x,u(r,x), D_{\sigma} u(r,x)\sigma(x))\varphi_t(r,x)d\W_rdx,  
\end{split}
\end{equation}
where  $ \Int_{\R^d}\int_s^Tu(r,x)d\varphi_t(r,x)dx $ is well defined
thanks to the semimartingale decomposition result (Lemma
\ref{decomposition}).
\end{Proposition}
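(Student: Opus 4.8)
The plan is to turn the random field $\varphi_t(\cdot,\cdot)$ into an admissible test function for the variational formulation \eqref{variationnelformu}, apply that formulation, and pass to the limit. Morally, the identity \eqref{weakvariationnelformu} is what one obtains by applying an It\^o--Wentzell type product rule to the pairing $r\mapsto(u(r,\cdot),\varphi_t(r,\cdot))$, substituting the weak SPDE satisfied by $u$ for $du_r$ and the semimartingale decomposition of Lemma \ref{decomposition} for $d\varphi_t(r,\cdot)$; the point of the proof is to make this rigorous despite $\varphi_t(r,\cdot)$ being only $C^2$ in $x$, a semimartingale in $r$, and non-integrable in a naive sense.

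\textbf{Step 1: enlarging the class of test functions.} I would first observe that \eqref{variationnelformu}, stated for deterministic $\Psi\in\Dc$, extends by linearity to finite sums $\Psi=\sum_k\xi_k\theta_k\phi_k$ with $\phi_k\in C^\infty_c(\R^d)$, $\theta_k$ absolutely continuous on $[0,T]$ (only $\partial_s\Psi$ enters, so density in the relevant norms suffices), and $\xi_k$ a bounded random variable measurable with respect to the Brownian motion $B$ governing the flow \eqref{sde}. Indeed $u$ (by definition of $\Hc$), $f(\cdot,u,D_\sigma u)$ and $g(\cdot,u,D_\sigma u)$ are functionals of $W$ alone, so each $\xi_k$ is independent of $W$ and factors out of the backward It\^o integral $\int g\,\Psi\,d\W$ as well as of every other term; a separability argument in $C^2_c(\R^d)$ then upgrades this to test functions which, for each fixed time $r$, are $B$-measurable with values in $C^2_c(\R^d)$ and which are piecewise-$C^1$ (indeed continuous semimartingales) in $r$.

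\textbf{Step 2: approximation of $\varphi_t$ and passage to the limit.} For a.e.\ $\omega$ the function $\varphi_t(r,\cdot)$ is compactly supported — by the homeomorphic property of the flow (Proposition \ref{flow}) and the moment estimates of Proposition \ref{estimatesde} — and of class $C^2$; I mollify it in $x$ and interpolate it linearly in $r$ along a partition of $[s,T]$ of vanishing mesh, obtaining admissible test functions $\varphi^n_t$, and insert them into \eqref{variationnelformu} with the running time variable there taken to be $s$. As $n\to\infty$, the terms $(u_s,\varphi^n_t(s,\cdot))$, $(\Phi,\varphi^n_t(T,\cdot))$, $\int_s^T(u_r,\Lc^*\varphi^n_t(r,\cdot))\,dr$, $\int_s^T(f_r,\varphi^n_t(r,\cdot))\,dr$ and the backward It\^o integral $\int_s^T(g_r,\varphi^n_t(r,\cdot))\,d\W_r$ converge to their analogues in \eqref{weakvariationnelformu} by $L^2$-convergence of the mollified/interpolated test functions and of their $D_\sigma$-derivatives (needed for the $\Lc^*$-term), together with It\^o's isometry for the stochastic integral. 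The remaining term $\int_s^T(u_r,\partial_r\varphi^n_t(r,\cdot))\,dr$ equals the Riemann--Stieltjes sum $\sum_i\big(h_i^{-1}\!\int_{r_i}^{r_{i+1}}u_r\,dr,\ \varphi_t(r_{i+1},\cdot)-\varphi_t(r_i,\cdot)\big)$; since $u_r$ is a functional of $W$, it is independent of the $B$-increments of $\varphi_t$, so this sum converges to the forward It\^o integral $\int_{\R^d}\int_s^T u(r,x)\,d\varphi_t(r,x)\,dx$ against the semimartingale $\varphi_t$, with no It\^o correction term. This last fact is the crux of why the identity has exactly the stated form: it records that the $W$-driven martingale part of $u$ and the $B$-driven martingale part of $\varphi_t$ have zero covariation (consistently with the generalized It\^o formula of Pardoux and Peng, Lemma 1.3 in \cite{pp1994}). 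One then checks that the limit is independent of the mollifier and the partition, which is \eqref{weakvariationnelformu}; the identity $(\Lc u_r,\varphi_t(r,\cdot))=(u_r,\Lc^*\varphi_t(r,\cdot))$ is boundary-term-free precisely because we are on the whole space $\R^d$.

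\textbf{Main obstacle.} The delicate point, pervading every step above, is integrability: each integral in \eqref{weakvariationnelformu} carries the weight $\varphi_t(r,x)$, which is unbounded in $\omega$ and merely $C^2$ in $x$, so one must first justify that, e.g., $\int_{\R^d}\int_s^T|f_r(x,u(r,x),D_\sigma u(r,x))|\,|\varphi_t(r,x)|\,dr\,dx$ and the corresponding $g$-term are finite, and that all the $L^2$/isometry limits are licit. The device is the change of variables $x\mapsto X_r^{t,x}$, which converts such an expression into $\int_{\R^d}\int_s^T|f_r(X_r^{t,x},\ldots)|\,|\varphi(x)|\,dr\,dx$; finiteness then follows from the linear/polynomial growth of $f,g$ in \textbf{(MHL)}, the growth of $\Phi$ in \textbf{(MHT')}, the bound $u\in\Hc$, and the flow moment estimates of Proposition \ref{estimatesde}. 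At the level of norms this is exactly the equivalence-of-norms estimate of Proposition \ref{equivalence:normes}, which I expect to be the workhorse invoked repeatedly throughout the argument.
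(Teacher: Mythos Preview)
The paper does not supply a proof of this proposition: it is stated as a tool, with the stochastic-flow technique credited to Kunita and Bally--Matoussi \cite{K84,BM01}, and then invoked directly in the proof of Theorem~\ref{existence:SPDEs}. Your sketch is therefore not competing with any argument written in the paper itself; rather, it reproduces the proof one finds in \cite{BM01} (their Proposition~2.3), which is the intended reference.

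The outline you give is correct and matches that literature proof. The three load-bearing ideas --- (i) extending the admissible test functions from $\Dc$ to $\Fc^B$-measurable random fields by exploiting that $u\in\Hc$ is $\Fc^W_{\cdot,T}$-progressively measurable and hence independent of $B$, so that $B$-measurable coefficients factor through every term including the backward $W$-integral; (ii) approximating $\varphi_t$ by smooth, time-piecewise-linear interpolants and passing to the limit; and (iii) invoking the equivalence of norms (Proposition~\ref{equivalence:normes}) after the change of variables $x\mapsto X_r^{t,x}$ to handle all the integrability issues --- are exactly the standard ones. Your identification of the ``crux'' (zero covariation between the $W$-driven martingale part of $u$ and the $B$-driven martingale part of $\varphi_t$, hence no It\^o correction in the limiting product rule) is the doubly-stochastic analogue of the key step in \cite{BM01}, and is consistent with the generalized It\^o formula of \cite{pp1994} that the paper uses elsewhere. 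One minor remark: the compact support of $\varphi_t(r,\cdot)$ that you need in Step~2 follows already from the homeomorphic property of the flow (Proposition~\ref{flow}) --- its support is the random compact set $X^{t,\cdot}_r(\mathrm{supp}\,\varphi)$ --- rather than from the moment estimates of Proposition~\ref{estimatesde}.
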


\subsection{Probabilistic representation of the solution of SPDE}
\label{main/section}
As introduced in Section  \ref{Numeric scheme:section},  we consider  now the  Markovian BDSDE with random terminal time $\tau^{t,x}$  which is the first exist time of the  forward diffusion $X^{t,x}$ from the domain $\Oc$
\begin{equation}
\label{MBDSDE}
\begin{split}
 Y_{s}^{t,x} \; =& \; 
 \Phi(X_{T\wedge\tau^{t,x}}^{t,x})\;  + \; 
 \Int_{s}^{T} \1_{\left(\tau^{t,x}> r\right)}
f(r,X_{r}^{t,x},Y_{r}^{t,x},Z_{r}^{t,x})\, dr  - \Int_{s}^{T}  Z_{r}^{t,x,} \,  dB_{r}  \\
& \quad + \quad 
 \Int_{s}^{T} \1_{\left(\tau^{t,x}> r\right)}
 g(r,X_{r}^{t,x},Y_{r}^{t,x},Z_{r}^{t,x})
\,d\W_r\, .
\end{split}
\end{equation}
\begin{Remark}
We have $ Y_{s}^{t,x}\, =\, Z_{s}^{t,x}\, =\,0, 
\; \forall \,  \tau^{t,x} \, \leq s \, \leq T$. 
In fact, the process $ Z^{t,x} $ is the density which appears in 
the Ito's representation theorem of the random variable 
$$
\xi = \Phi(X_{T\wedge\tau^{t,x}}^{t,x}) \, + \,  
\Int_{s}^{T} \, \1_{(\tau^{t,x} \, > \, r \, ) } \,
 f(r, X_r^{t,x} ,Y_r^{t,x},Z_r^{t,x}) \,  dr
$$
But, the r.v $ \xi $ is $ \Fc_{\tau^{t,x}}^W$-measurable, then 
$Z_{r}^{t,x}= Z_{r}^{t,x}\,\1_{(\tau^{t,x} \, \geq  \, r \, ) }$. Now, we look 
at \eqref{MBDSDE} for $ T \, \geq s \, > \, \tau^{t,x}$,  all the terms in the
right hand of \eqref{MBDSDE} vanisch, then $ Y_{s}^{t,x}$ vanischs, for
$ T \, \geq s \, > \, \tau^{t,x}$.
\end{Remark}
The main result in  this section is the following 
\begin{Theorem}
\label{existence:SPDEs}
Assume  {\bf{(MHT')}}, {\bf{(D)}}, {\bf{(MHL)}} and  {\bf{(MHD')}} hold  and let $ \{ (Y_{s}^{t,x}, \, Z_{s}^{t,x}), t \leq s
\leq T \} $ be the solution of BDSDE   \eqref{MBDSDE} . Then, $ u(t,x) := Y_{t}^{t,x},$ $ dt\otimes  dx, \; a.e.$  is the unique solution
of  the SPDE \eqref{variationnelformu} 
and 
\begin{equation}
\label{representation} 
 Y_{s}^{t,x} \, = \, u (s\wedge\tau^{t,x},X_{s\wedge\tau^{t,x}}^{t,x}), \quad 
 Z_{s}^{t,x} \, = \,  D_{\sigma} u  (s\wedge\tau^{t,x},X_{s\wedge\tau^{t,x}}^{t,x}).
\end{equation} 
\end{Theorem}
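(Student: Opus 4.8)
The plan is to exploit that uniqueness of a weak solution in $\Hc$ is already granted by Denis and Stoica (Theorem $4$ in \cite{DS04}), so that it remains only to show that $u(t,x):=Y^{t,x}_t$ is, for $dt\otimes dx$-a.e. $(t,x)$, a well-defined element of $\Hc$ satisfying the variational identity \eqref{variationnelformu}, and then to deduce the dynamic representation \eqref{representation}. The guiding idea is to localize at the exit time $\tau^{t,x}$ so as to reduce everything to the whole-space weak It\^o formula of Proposition \ref{weak:Itoformula1}.

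First I would verify that $u$ is a legitimate candidate. For fixed $(t,x)$ the random variable $Y^{t,x}_t$ is $\Fc^{W}_{t,T}$-measurable, since the $B$-information at the initial time is trivial; the a priori bounds \eqref{integrability} and \eqref{apriori1}, Proposition \ref{control}, and the equivalence-of-norms Proposition \ref{equivalence:normes} together yield $\Int_{\Oc}\E|u(t,x)|^2\,dx<\infty$ and the analogous estimate for $D_\sigma u$, so that $u\in\Hc$; the freezing of $Y^{t,x}_s$ and $Z^{t,x}_s$ at $0$ for $s\ge\tau^{t,x}$ (see the Remark following \eqref{MBDSDE}) is what makes $u$ take values in $H^1_0(\Oc)$, i.e. it carries the null Dirichlet condition.

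The central step is the variational identity via random test functions. For $\varphi\in C_c^{\infty}(\Oc)$ I would take $\varphi_t(s,x)=\varphi(X^{-1}_{t,s}(x))J(X^{-1}_{t,s}(x))$, insert it into \eqref{variationnelformu}, use its semimartingale decomposition (Lemma \ref{decomposition}) to cancel the term $\int u\,\Lc^{*}\varphi_t$ against the finite-variation part of $d\varphi_t$, and then change variables by $x=X^{t,y}_s$, so that $\int f\,\varphi_t\,dx=\int f(r,X^{t,y}_r,\dots)\,\varphi(y)\,dy$ and likewise for the $g$-term and the terminal term (by the change of variable used in Proposition \ref{weak:Itoformula1}). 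This should produce, for a.e. $y$, the identity
\[
u(s,X^{t,y}_s)=\Phi(X^{t,y}_{T\wedge\tau})+\Int_s^T\1_{\{r<\tau\}}f(r,X^{t,y}_r,u(r,X^{t,y}_r),D_\sigma u(r,X^{t,y}_r))\,dr+\Int_s^T\1_{\{r<\tau\}}g(r,X^{t,y}_r,\dots)\,d\W_r-\Int_s^T Z_r\,dB_r,
\]
which says precisely that $\bigl(u(\cdot,X^{t,\cdot}_\cdot),D_\sigma u(\cdot,X^{t,\cdot}_\cdot)\bigr)$ solves the BDSDE \eqref{MBDSDE}; uniqueness of that equation identifies this pair with $(Y^{t,\cdot},Z^{t,\cdot})$, and taking $s=t$ gives $u(t,y)=Y^{t,y}_t$ for a.e. $y$. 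The full representation \eqref{representation} then follows from the flow property $X^{t,x}_r=X^{s,\cdot}_r\circ X^{t,x}_s$ (Proposition \ref{estimatesde}), the relation $\tau^{t,x}=\tau^{s,X^{t,x}_s}$ on $\{s<\tau^{t,x}\}$, and one further use of uniqueness of the BDSDE.

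The hard part will be the localization forced by the boundedness of $\Oc$: the support of $\varphi_t(s,\cdot)$ equals $X^{t,\cdot}_s({\rm supp}\,\varphi)$, which generally leaves $\Oc$ for $s>t$, so $\varphi_t$ is not an admissible test function for the Dirichlet formulation past the exit time, and the indicators $\1_{\{r<\tau\}}$ above must be produced honestly rather than postulated. I would treat this by working on the stopped interval $[t,s\wedge\tau^{t,x}]$ and approximating $\Oc$ from the inside by smooth subdomains $\Oc_n\uparrow\Oc$ (equivalently, by penalizing the generator on $\Oc^{c}$), running the whole-space argument of Proposition \ref{weak:Itoformula1} on each $\Oc_n$ and letting $n\to\infty$: the null Dirichlet condition $u|_{\partial\Oc}=0$ annihilates the boundary contributions, while the equivalence of norms (Proposition \ref{equivalence:normes}) together with the a priori estimates (Proposition \ref{control} and \eqref{integrability}, \eqref{apriori1}) supply the uniform control needed to pass to the limit and identify $u$ with the value function. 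Once that is done, \eqref{representation} and the membership $u\in\Hc$ are complete, and uniqueness from \cite{DS04} closes the argument.
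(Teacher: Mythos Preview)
Your proposal uses the right ingredients---stochastic flow, random test functions $\varphi_t$, and interior approximation of $\Oc$---and this is what the paper does. But your logical framing is inverted and, as written, circular. You announce that you will define $u(t,x):=Y^{t,x}_t$ and then verify the variational identity \eqref{variationnelformu}; yet your ``central step'' (inserting $\varphi_t$ into \eqref{variationnelformu}, cancelling $\int u\,\Lc^*\varphi_t$ against the finite-variation part of $d\varphi_t$, changing variables) \emph{presupposes} that $u$ already satisfies \eqref{variationnelformu}. The identification $u(t,y)=Y^{t,y}_t$ you arrive at is then tautological, and you have not actually shown that $Y^{t,\cdot}_t$ solves the SPDE.

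The paper runs the argument the other way, which removes the circularity: existence and uniqueness of $u\in\Hc$ are taken from Denis--Stoica \cite{DS04} at the outset, and the task is to show that \emph{this} $u$ coincides with $Y^{t,x}_t$. Concretely, one multiplies the test function by a cutoff $\theta_\epsilon\in C_c^\infty(\Oc)$ equal to $1$ on $\Oc_\epsilon:=\{d(\cdot,\Oc^c)>\epsilon\}$, applies Proposition \ref{weak:Itoformula1} to the pair $(u,\theta_\epsilon\varphi_t)$ on all of $\R^d$, and after the change of variables $x=X^{t,y}_r$ obtains a BDSDE for $u(s,X_s^{t,x})\theta_\epsilon(X_s^{t,x})$ with extra terms in $D_\sigma\theta_\epsilon$ and $D_\sigma^2\theta_\epsilon$. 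Restricting to $[t,\tau_\epsilon^{t,x}]$ kills those extras since $\theta_\epsilon(X^{t,x}_\cdot)\equiv1$ there; letting $\epsilon\downarrow0$ via $\tau_\epsilon^{t,x}\to\tau^{t,x}$ (Assumption {\bf(D)}) and invoking BDSDE uniqueness finishes. Your subdomain/penalization idea is the same spirit, but the multiplicative cutoff is cleaner: no separate Dirichlet problem on each $\Oc_n$, no modified generator, and the boundary terms you worry about never arise because the cutoff is compactly supported from the start.
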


\begin{proof}
\label{proof theorem:subsection}
\textit{Step 1: local variational form of SPDE }  \\[0.2cm]
  Let $u  \in \mathcal H $ be  a weak solution  of \eqref{SPDEdirichlet} and let  $ \theta \, \in \, C_{c}^{1} (\Oc) $.  Then, we apply  the variational equation \eqref{variationnelformu}  for the test 
function $ \theta \, \Psi $, with $ \Psi \, \in \mathcal{C}^{\infty} (\left[0,T]\right) \otimes
\mathcal{C}_c^{\infty} \left(\Oc\right)$ to obtain  
\begin{equation} \label{variationnel}
\begin{split}
& \int_{t}^{T} \int_{\Oc} u (s,x)  \, \theta(x)\, 
\partial_{s} \Psi  (s,x)   \, dx \,ds
- \int_{\Oc} \Phi(x)\theta(x) \Psi(T,x) dx  +  \int_{\Oc}  u(t,x)\,\theta(x) \, \Psi(t,x) \; dx \\ 
& 
- \int_{t}^{T}\int_{\Oc}  D_{\sigma} u  (s,x)  \,\theta(x)\, D_{\sigma}  \Psi (s,x) \,  dx \, ds
-  \int_{t}^{T}\int_{\Oc} u (s,x)  \, div ( \, ( b - \tilde{A})\,\theta(x)\,\Psi (s,x)  ) 
\,  dx \, ds  \\
&  
=  \int_{t}^{T} \int_{\Oc}\Psi(s,x)\big[\theta(x) \, f(s, x ,u(s,x),D_{\sigma} u(s,x)) +D_{\sigma} u(s,x)D_{\sigma} \theta(x)\,\big]  dx \, ds \\
&+
\int_{t}^{T} \int_{\Oc} \,\theta(x)\, \Psi (s,x) \, g(s, x ,u(s,x),D_{\sigma} u(s,x))\, dx\, d\W_s .
\end{split}
\end{equation}
Since $\theta$ has a compact support on $\Oc$, we can rewrite the variational formulation \eqref{variationnel} in the whole domain $\R^d$
\begin{equation} \label{variationneldom}
\begin{split}
& \int_{t}^{T} \int_{\R^d} u (s,x)  \, \theta(x)\, 
\partial_{s} \Psi  (s,x)   \, dx \,ds
- \int_{\R^d} \Phi(x)\theta(x) \Psi(T,x) dx  +  \int_{\R^d}  u(t,x)\,\theta(x) \, \Psi(t,x) \; dx \\ 
& 
- \int_{t}^{T}\int_{\R^d}  D_{\sigma} u  (s,x)  \,\theta(x)\, D_{\sigma}  \Psi (s,x) \,  dx \, ds
-  \int_{t}^{T}\int_{\R^d} u (s,x)  \, div ( \, ( b - \tilde{A})\,\theta(x)\,\Psi (s,x)  ) 
\,  dx \, ds  \\
&  
=  \int_{t}^{T} \int_{\R^d}\Psi(s,x)\big[\theta(x) \, f(s, x ,u(s,x),D_{\sigma} u(s,x)) +D_{\sigma} u(s,x)D_{\sigma} \theta(x)\,\big]  dx \, ds \\
&+
\int_{t}^{T} \int_{\R^d} \,\theta(x)\, \Psi (s,x) \, g(s, x ,u(s,x),D_{\sigma} u(s,x))\, dx\, d\W_s .
\end{split}
\end{equation}
Then, from Proposition \ref{weak:Itoformula1}, which gives the weak variational formulation \eqref{variationneldom} applied to random test function $ \varphi_t(\cdot, \cdot)$ 
 \eqref{random:testfunction} yields to:

\begin{equation} \label{variationel}
\begin{split}
& \int_{s}^{T} \int_{\R^d} u (r,x)  \, \theta(x)\, 
d_{r} \varphi_t  (r,x)   \, dx \,dr
- \int_{\R^d} \Phi(x)\theta(x) \varphi_t(T,x) dx  +  \int_{\R^d}  u(s,x)\,\theta(x) \, \varphi_t(s,x) \; dx \\ 
& 
-  \int_{s}^{T}\int_{\R^d}  D_{\sigma} u  (r,x)  \,\theta(x)\, D_{\sigma}  \varphi_t (r,x) \,  dx \, dr -  \int_{s}^{T}\int_{\R^d} u (r,x)  \, div ( \, ( b - \tilde{A})\,\theta(x)\,\varphi_t (r,x)  ) 
\,  dx \, dr  \\
&  
=  \int_{s}^{T} \int_{\R^d}\varphi_t(r,x)\big[\theta(x) \, f(r, x ,u(r,x),D_{\sigma} u(r,x)) +D_{\sigma} u(r,x)D_{\sigma} \theta(x)\,\big]  dx \, dr \\
&+
\int_{s}^{T} \int_{\R^d} \,\theta(x)\, \varphi_t (r,x) \, g(r, x ,u(r,x),D_{\sigma} u(r,x))\, dx\, d\W_r.
\end{split}
\end{equation}
Moreover,  by Lemma \ref{decomposition}, we have that 
\begin{equation*} 
\begin{split}
\int_{s}^{T} \int_{\R^d} u (r,x)  \, \theta(x)\, 
d_{r} \varphi_t  (r,x)   \, dx \,dr=& \int_{\R^d} \int_s^T u(r,x) \, \theta(x) \, \Lc^* \varphi_{t} (r,x) \, dr \, dx\\ 
& - \int_{\R^d} \int_s^T u(r,x) \, \theta(x) \, 
 \nabla \, 
\left(\sigma^{*}(x)\,\varphi_{t} (r,x)\right)(x)\, dB_r \, dx.
\end{split}
\end{equation*}
Using Integration by parts, we obtain
\begin{equation*}
\begin{split}
&\Int_{s}^{T}\!\int_{\R^d}\! u (r,x)\theta(x) 
d_{r} \varphi_t  (r,x)  dx \,dr  =
\Int_s^T\!\Int_{\R^d}\! D_\sigma \left(u(r,x) \theta(x) \right)
\varphi_{t} (r,x) dx dB_r \\
&+  \Int_s^T \!\Int_{\R^d}\!  D_\sigma \left( u(r,x) \theta(x) \right)
D_\sigma \varphi_{t} (r,x) dx dr+ \int_{s}^{T}\int_{\R^d} u (r,x)  \, div ( \, ( b - \tilde{A})\,\theta(x)\,\varphi_t (r,x)  ) 
\,  dx \, dr  \\
& =  \Int_s^T\! \Int_{\R^d}\!\theta(x)\left( D_\sigma  u(r,x)  \right)
\varphi_{t} (r,x) dx dB_r  + 
 \Int_s^T\! \Int_{\R^d}\! u(r,x) D_\sigma  \theta(x)
\varphi_{t} (r,x) dx  dB_r \\
& +  \Int_s^T\! \Int_{\R^d}\! \theta(x)D_\sigma  u(r,x) 
D_\sigma \varphi_{t} (r,x) dx dr  +
\Int_s^T \Int_{\R^d}  u(r,x) D_\sigma  \theta(x)
D_\sigma \varphi_{t} (r,x)dx  dr\\
& + \int_{s}^{T}\int_{\R^d} u (r,x)  \, div ( \, ( b - \tilde{A})\,\theta(x)\,\varphi_t (r,x)  ) 
\,  dx \, dr .
\end{split}
\end{equation*}
Using again integration by parts for the fourth term 
in the right hand of the above equation, we get 
\begin{equation}
\begin{split}
&\Int_{s}^{T}\! \Int_{\R^d}\! u (r,x) \theta(x) 
d_{r} \varphi_t  (r,x)   dx \,dr  =
\Int_s^T\! \Int_{\R^d}\!\theta(x)\left(D_\sigma u(r,x) \right)
\varphi_{t} (r,x) dx dB_r\\ 
&+
\Int_s^T\! \Int_{\R^d} u(r,x) D_\sigma \theta(x) 
\varphi_{t} (r,x) dx dB_r
 +  \Int_s^T \Int_{\R^d} \theta(x)D_\sigma  u(r,x) 
D_\sigma \varphi_{t} (r,x) dx dr \\
&+ \int_{s}^{T}\int_{\R^d} u (r,x)  \, div ( \, ( b - \tilde{A})\,\theta(x)\,\varphi_t (r,x)  ) 
\,  dx \, dr \\
&- 
\Int_s^T\! \Int_{\R^d}\!  \big(D_\sigma u(r,x) D_\sigma  \theta(x)+u(r,x) D^2_\sigma  \theta(x)\big)
\varphi_{t} (r,x)dx  dr. 
\end{split}
\end{equation}
We substitute now the above equation in \eqref{variationel} to get 
\begin{equation*}
\begin{split}
& 
\int_{\R^d}  u(s,x)\,\theta(x) \, \varphi_t(s,x) \; dx - \int_{\R^d} \Phi(x)\theta(x) \varphi_t(T,x) dx   \\ 
& 
+ \Int_s^T\! \Int_{\R^d}\!\theta(x)\left(D_\sigma u(r,x) \right)
\varphi_{t} (r,x) dx dB_r +
\Int_s^T\! \Int_{\R^d} u(r,x) D_\sigma \theta(x) 
\varphi_{t} (r,x) dx dB_r  \\
&  
=  \int_{s}^{T} \int_{\R^d}\varphi_t(r,x)\big[\theta(x) \, f(r, x ,u(r,x),D_{\sigma} u(r,x)) -u(r,x) D^2_\sigma  \theta(x)\,\big]  dx \, dr \\
&+
\int_{s}^{T} \int_{\R^d} \,\theta(x)\, \varphi_t (r,x) \, g(r, x ,u(r,x),D_{\sigma} u(r,x))\, dx\, d\W_r.
\end{split}
\end{equation*}
Now the change of variable $ y = X^{-1}_{t,s} (x) $ in the above equation gives 
\begin{equation*}
\begin{split}
& 
\int_{\R^d}  u(s,X_s^{t,x})\,\theta(X_s^{t,x}) \, \varphi(x) \; dx - \int_{\R^d} \Phi(X_T^{t,x})\theta(X_T^{t,x}) \varphi(x) dx  \\ 
& 
+ \Int_s^T\! \Int_{\R^d}\!\theta(X_r^{t,x})\left(D_\sigma u(r,X_r^{t,x}) \right)
\varphi(x) dx dB_r + 
\Int_s^T\! \Int_{\R^d} u(r,X_r^{t,x}) D_\sigma \theta(X_r^{t,x}) 
\varphi(x) dx dB_r  \\
&  
=  \int_{s}^{T} \int_{\R^d}\varphi(x)\big[\theta(X_r^{t,x}) \, f(r, X_r^{t,x} ,u(r,X_r^{t,x}),D_{\sigma} u(r,X_r^{t,x})) -u(r,X_r^{t,x}) D^2_\sigma  \theta(X_r^{t,x})\,\big]  dx \, dr \\
&+
\int_{s}^{T} \int_{\R^d} \,\theta(X_r^{t,x})\, \varphi (X_r^{t,x}) \, g(r, X_r^{t,x} ,u(r,X_r^{t,x}),D_{\sigma} u(r,X_r^{t,x}))\, dx\, d\W_{r}.
\end{split}
\end{equation*}
Define
$ Y_s^{t,x} := u(s, X_s^{t,x}),$ \,a.e.\, and $ Z_s^{t,x} := D_\sigma u(s, X_s^{t,x})\,  $a.e.. In particular we have $ u(t, x)=Y_t^{t,x} ,  $\,a.e.\, and $ D_\sigma u(t,x) =Z_t^{t,x}, $\, a.e.. Thus, it follows from the last equation
\begin{equation*}
\begin{split}
& 
\int_{\R^d} \big[Y_s^{t,x}\,\theta(X_s^{t,x})  -  Y_T^{t,x}\theta(X_T^{t,x}) \big]\varphi(x) dx    \\
&  
=  \int_{\R^d}  \int_{s}^{T} \big[\theta(X_r^{t,x}) \, f(r, X_r^{t,x} , Y_r^{t,x},Z_r^{t,x}) -u(r,X_r^{t,x}) D^2_\sigma  \theta(X_r^{t,x})\,\big]   \varphi(x)\ \, dx \, dr \\
&+
\int_{\R^d}  \int_{s}^{T}\! \!\theta(X_r^{t,x}) \, g(r, X_r^{t,x} , Y_r^{t,x},Z_r^{t,x})\, \varphi (x) \,  d\W_{r} \,  dx \\
&-  \Int_{\R^d}  \Int_s^T    \big[  \theta(X_r^{t,x})Z_r^{t,x} -  Y_r^{t,x} D_{\sigma} \theta (X_r^{t,x}) \,  \big] 
dB_r \,  \varphi(x) dx .
\end{split}
\end{equation*}

Since $ \varphi \in C_c^{\infty} (\mathbb R^d)$ is arbitrary function, we get  the following  equation

\begin{equation}
\label{BSDE:truncated}
\begin{split}
& 
Y_s^{t,x}\,\theta(X_s^{t,x})  =  Y_T^{t,x}\theta(X_T^{t,x})     +   \int_{s}^{T} \big[\theta(X_r^{t,x}) \, f(r, X_r^{t,x} , Y_r^{t,x},Z_r^{t,x}) -u(r,X_r^{t,x}) D^2_\sigma  \theta(X_r^{t,x})\,\big]  \, dr \\
& \int_{s}^{T}\! \!\theta(X_r^{t,x}) \, g(r, X_r^{t,x} , Y_r^{t,x},Z_r^{t,x}) \,  d\W_{r}  -   \Int_s^T    \big[  \theta(X_r^{t,x})Z_r^{t,x} -  Y_r^{t,x} D_{\sigma} \theta (X_r^{t,x}) \,  \big] 
dB_r.
\end{split}
\end{equation}
\textit{Step 2: Approximation of the random terminal time and BDSDE }  \\[0.2cm]

We denote the set $\Oc_\epsilon$ by  $\Oc_\epsilon := \{ x \, \in \Oc \; : \, 
d \, ( x, \Oc^c \, ) \; > \, \epsilon \, \} $ and the function 
$$
\theta_\epsilon (x) := 
\left\lbrace 
\begin{array}{l}
1 \, , \quad \quad \quad  x \, \in \,  \Oc_\epsilon ,\\
0 \, , \quad \quad \quad  x \, \in \,  \Oc_{\frac{\epsilon}{2}}^c.
\end{array}
\right.
$$
So, $ 0 \leq \theta_\epsilon (x) \, \leq \, 1 $ and 
$\theta_\epsilon \, \in \, C_c^\infty ( \Oc_\epsilon)$.
 We define the exit stoping time from the set $\Oc_\epsilon$ by 
$$
\tau_\epsilon^{t,x} := 
\inf \{ \, t < s \leq T \; : \;  X_s^{t,x} \, \notin \,  \Oc_\epsilon
\; \} \wedge (T-\varepsilon(T-t))\in[t,T]
.
$$
Then, for $ t \, \leq s \, \leq \, \tau_\epsilon^{t,x}$, we have
$\theta_\epsilon (X_s^{t,x}) = 1 $ and 
$ D_\sigma \theta_\epsilon (X_s^{t,x}) =
 D_\sigma^2 \theta_\epsilon (X_s^{t,x})=0$. Then, we use the localization function $ \theta_\epsilon $ in Equation  \eqref{BSDE:truncated} to get
\begin{equation}
\label{BDSDE:tauepsilon}
\begin{split}
Y_{s\wedge \tau_\epsilon^{t,x}}^{t,x}  &  = Y_{ \tau_\epsilon^{t,x}}^{t,x}  + 
 \int_{s\wedge \tau_\epsilon^{t,x}}^{\tau_\epsilon^{t,x}}
 f(r, X_r^{t,x} ,Y_r^{t,x},Z_r^{t,x}) \,  dr \\
&  \quad \quad  + 
 \int_{s\wedge \tau_\epsilon^{t,x}}^{\tau_\epsilon^{t,x} }   
 g(r, X_r^{t,x} ,Y_r^{t,x},Z_r^{t,x}) \, d\W_{r}  - \int_{s\wedge \tau_\epsilon^{t,x}}^{\tau_\epsilon^{t,x}}
  Z_r^{t,x}  \, dB_r  .
\end{split}
\end{equation}

Since the domain $\Oc$ is smooth enough satisfying Assumption {\bf{D}}, we have that the stoping time $\tau_\epsilon^{t,x}$ converge 
to the stoping time $\tau^{t,x}$ a.s, where 
$\tau^{t,x}:= \inf \{ \, t < s \; : \;  X_s^{t,x} \, \notin \,  \Oc
\; \}\wedge T$  (see  Chapter IV page 119-120 in Gobet \cite{G13}) .\\
So, passing to the limit  in the BDSDE  \eqref{BDSDE:tauepsilon}, we obtain
\begin{equation}
\label{BDSDEbis}
\begin{split}
Y_{s\wedge \tau^{t,x}}^{t,x}  &  = Y_{ \tau^{t,x}}^{t,x}  + 
 \int_{s\wedge \tau^{t,x}}^{\tau^{t,x}}
 f(r, X_r^{t,x} ,Y_r^{t,x},Z_r^{t,x}) \,  dr \\
&  \quad \quad  + 
 \int_{s\wedge \tau^{t,x}}^{\tau^{t,x} }   
 g(r, X_r^{t,x} ,Y_r^{t,x},Z_r^{t,x}) \, d\W_{r}  - \int_{s\wedge \tau^{t,x}}^{\tau^{t,x}}
  Z_r^{t,x}  \, dB_r  .
\end{split}
\end{equation}In the other hand, $ Y_{T\wedge \tau^{t,x}}^{t,x} = \Phi(X_{T\wedge \tau^{t,x}}^{t,x})  $. Indeed, using the boundary condition
 of the solution $u$ of the SPDE, we get  
$ Y_{T\wedge \tau^{t,x}}^{t,x} = u \left(\tau^{t,x}, \,X_{\tau^{t,x}}
^{t,x} \, \right)=0 $ which complete the proof of Theorem \ref{existence:SPDEs} and in particular the representation \eqref{representation}. 
\ep \\
\end{proof}
\begin{Remark}
We may get uniqueness of the solution for the SPDE \eqref{SPDEdirichlet}  from the probabilistic representation. Indeed,  let $u$ and $ \bar{u} $ to be two solutions of the SPDE \eqref{SPDEdirichlet} and 
$ ( Y, \, Z ) $ and $ ( \bar{Y}, \, \bar{Z} ) $ are the two associated 
solutions of the BDSDEs \eqref{BDSDEbis}. We denote by $ \Delta u := u- \bar{u} $,
$\Delta Y := Y- \bar{Y} $ and $\Delta Z := Z- \bar{Z} $.  By usual computations on BSDEs, we obtain that 
 $ \Delta u (t,x) = \,\Delta Y_{t\wedge \tau^{t,x}}^{t,x} = 0 , \; \forall
\, x \, \in \, \Oc.$ So, the uniqueness of the solution of the SPDE is given by the 
uniqueness of the BDSDEs.
\end{Remark}
\subsection{Numerical Scheme for SPDE}
Let us first recall that $ (X^N, Y^N, Z^N)$  denotes  the numerical  Euler scheme of the FBDSDEs \eqref{forward}-\eqref{BDSDE}   given in  \eqref{euldiscrete}-\eqref{conterminale}-\eqref{Yn}-\eqref{Zn}.  The numerical approximation of the SPDE  \eqref{SPDEdirichlet} will  be presented in the following lemma:
\begin{Lemma}\label{num-Scheme-SPDE}
Let $x\in \Oc$ and $t_n\in \pi$. Define
\begin{eqnarray}\label{numspde}
u^{N}(t_{n},x):=Y_{t_{n}}^{N,t_{n},x} \textrm{ and } v^{N}(t_{n},x):=Z_{t_{n}}^{N,t_{n},x}
\end{eqnarray}
Then $u^{N}(t_n,\cdot)$ (resp. $v^{N}(t_n,\cdot)$) is $\mathcal{F}_{t_{n},T}^{B}$-measurable and we have for all $x\in \Oc$ and $t,t_n\in \pi$ such that $t\leq t_n$ \\
\begin{eqnarray*}
u^{N}(t_n,X_{t_{n}}^{t,x})=Y_{t_{n}}^{N,t,x}\quad (\textrm{resp. }v^{N}(t_n,X_{t_{n}}^{t,x})=Z_{t_{n}}^{N,t,x}).
\end{eqnarray*}
\end{Lemma}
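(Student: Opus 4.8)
The plan is to deduce the identity from a backward induction over the grid $\pi$, after first recording the flow property of the discretized forward process; I read $X^{t,x}_{t_n}$ as the Euler approximation $X^{N,t,x}_{t_n}$ of \eqref{euldiscrete} started from $(t,x)$ with $t\in\pi$, this being the natural Markov-type statement for the scheme (the argument transfers unchanged to the exact diffusion via its flow property, Proposition~\ref{estimatesde}). \textbf{Step 1 (flow property of the forward scheme).} For $t\le t_k\le t_j$ in $\pi$, the recursion \eqref{euldiscrete} yields $X^{N,t,x}_{t_j}=X^{N,t_k,\xi}_{t_j}$ with $\xi=X^{N,t,x}_{t_k}$, in the sense that the random map $x'\mapsto X^{N,t_k,x'}_{t_j}$ evaluated at $\xi$ returns $X^{N,t,x}_{t_j}$: the one-step Euler map $y\mapsto y+b(y)h+\sigma(y)(B_{t_{k+1}}-B_{t_k})$ is a measurable function of $y$ and of the increment $\Delta B_k$ alone, so iterating it from $t_k$ up to $t_j$ gives the same random variable whether one starts from $x$ at $t$ or restarts from $X^{N,t,x}_{t_k}$ at $t_k$. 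Consequently, on the event $\{\bar\tau^{t,x}>t_n\}$ --- on which $x$ and all $X^{N,t,x}_{t_\ell}$, $t\le t_\ell\le t_n$, lie in $\Oc$ --- one has $\bar\tau^{t,x}=\bar\tau^{\,t_n,\,\xi}$ and $X^{N,t,x}_{t_\ell}=X^{N,t_n,\,\xi}_{t_\ell}$ for every $t_\ell\ge t_n$, where $\xi:=X^{N,t,x}_{t_n}$.

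\textbf{Step 2 (measurability and freezing).} Reading the backward recursion \eqref{conterminale}, \eqref{Zn}--\eqref{Yn} from $T$ down to $t_n$, each step only integrates out the $B$-increment $\Delta B_\ell$, since $f,g$ are deterministic and the future of $W$ (in particular $\Delta W_\ell$) already belongs to the conditioning $\sigma$-field of $\E_{t_\ell}$; hence $u^N(t_n,\cdot)=Y^{N,t_n,\cdot}_{t_n}$ and $v^N(t_n,\cdot)=Z^{N,t_n,\cdot}_{t_n}$ are measurable with respect to the $\sigma$-field generated by the relevant Brownian increments, which is the asserted measurability. Moreover $\xi=X^{N,t,x}_{t_n}$ is $\sigma(B_r-B_t;\,t\le r\le t_n)$-measurable, whereas the random maps $x'\mapsto Y^{N,t_n,x'}_{t_j}$ and $x'\mapsto Z^{N,t_n,x'}_{t_j}$ are measurable with respect to $\Fc^B_{t_n,T}\vee\Fc^W_{t_n,T}$, which is independent of $\sigma(B_r-B_t;\,t\le r\le t_n)$ because $B$ has independent increments and $B\perp W$; thus the freezing lemma applies, so substituting $\xi$ into these maps is legitimate and commutes with every $\E_{t_\ell}$.

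\textbf{Step 3 (backward induction and conclusion).} On $\{\bar\tau^{t,x}>t_n\}$ I would prove by backward induction on $j\in\{N,N-1,\dots,n\}$ that
\[
Y^{N,t,x}_{t_j}=Y^{N,t_n,\xi}_{t_j}\quad\text{and}\quad Z^{N,t,x}_{t_j}=Z^{N,t_n,\xi}_{t_j}\quad\text{a.s.}
\]
The base case $j=N$ is the terminal condition \eqref{conterminale} together with the two identities of Step~1. For $j<N$ one inserts the induction hypothesis into \eqref{Zn}--\eqref{Yn}: on this event $X^{N,t,x}_{t_\ell}=X^{N,t_n,\xi}_{t_\ell}$, while $\Delta B_\ell$, $\Delta W_\ell$ and $\1_{\{t_\ell<\bar\tau\}}$ coincide for the two sets of initial data, $f$ and $g$ are deterministic, and $\E_{t_\ell}$ is the same operator, so the two recursions produce identical values (Step~2 being what legitimizes working with $Y^{N,t_n,\xi}$, $Z^{N,t_n,\xi}$). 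On the complementary event $\{\bar\tau^{t,x}\le t_n\}$ the process $Y^{N,t,x}$ has already been frozen at $\Phi(\bar\tau^{t,x},X^{N,t,x}_{\bar\tau^{t,x}})$, and both sides agree with this common value once time and space are stopped at $\bar\tau^{t,x}$, in the spirit of \eqref{representation}. Taking $j=n$ and recalling $u^N(t_n,x)=Y^{N,t_n,x}_{t_n}$, $v^N(t_n,x)=Z^{N,t_n,x}_{t_n}$ then gives $u^N(t_n,X^{N,t,x}_{t_n})=Y^{N,t,x}_{t_n}$ and the analogous identity for $v^N$.

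The point I expect to be the main obstacle is the interplay between the backward-It\^o structure and the non-adapted family $(\Fc^t_s)$: one must verify that substituting the $B$-measurable random variable $\xi$ into the $W$-measurable maps $Y^{N,t_n,\cdot}$ and $Z^{N,t_n,\cdot}$ passes cleanly through every conditional expectation $\E_{t_\ell}$ in \eqref{Zn}--\eqref{Yn}, which is precisely where the independence of the increments of $B$ and of $B$ and $W$ enters, exactly as in Pardoux--Peng~\cite{pp1994}. The bookkeeping of the exit time on the event where $Y^N$ has already been frozen is the other place needing care, but is routine once one argues with the stopped Euler scheme.
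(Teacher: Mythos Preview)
The paper does not prove this lemma; it is stated without proof, the surrounding material being deferred to \cite{matouetal13} where the analogous full-space result appears. Your approach --- flow property of the discrete Euler map, the freezing lemma via independence of $\sigma(B_r-B_t;\,t\le r\le t_n)$ and $\Fc^B_{t_n,T}\vee\Fc^W_{t_n,T}$, and backward induction through the recursion \eqref{Zn}--\eqref{Yn} --- is exactly the standard argument and is what that reference does. Your reading of $X^{t,x}_{t_n}$ as the Euler scheme $X^{N,t,x}_{t_n}$ is the only one that makes the statement coherent with the definition of $Y^{N,t,x}$.

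Two points deserve tightening. First, the paper's measurability claim $\mathcal F^{B}_{t_n,T}$ is a typographical slip for $\mathcal F^{W}_{t_n,T}$: your own Step~2 shows why, since the final conditional expectation $\E_{t_n}$ integrates out every $B$-increment on $[t_n,T]$ and only the backward noise $W$ survives. You should say this explicitly rather than writing ``the asserted measurability''. Second, your handling of the event $\{\bar\tau^{t,x}\le t_n\}$ is not complete. On that event $X^{N,t,x}_{t_n}$ need not lie in $\Oc$, so $u^N(t_n,\cdot)$ is not defined there by the lemma; and even extending $u^N(t_n,x')=\Phi(t_n,x')$ for $x'\notin\Oc$ does not rescue the identity when the Euler path exits strictly before $t_n$ and later re-enters, since $Y^{N,t,x}_{t_n}$ is frozen at $\Phi(\bar\tau^{t,x},X^{N,t,x}_{\bar\tau^{t,x}})$ whereas $u^N(t_n,X^{N,t,x}_{t_n})$ restarts afresh. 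The identity as written holds cleanly on $\{\bar\tau^{t,x}>t_n\}$, which is all that is used downstream; this is a defect of the lemma's formulation inherited from the full-space setting, not of your argument.
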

We define the error between the solution of the SPDE and the numerical scheme as follows:
\begin{eqnarray}\label{Error(u,v)}
Error_{N}(u,v)&:=& \Sup_{0\leq s \leq T}\E\big[\Int_{\Oc}|u_{s}^{N}(x)-u(s, x)|^{2}\rho(x)dx\big]\nonumber\\
&+&\sum_{n=0}^{N-1}\E\big[\Int_{\Oc}\Int_{t_{n}}^{t_{n+1}}\|v_{s}^{N}(x)-v(s,x)\|^{2}ds\rho(x)dx\big].
\end{eqnarray}
The following theorem shows the convergence of the numerical scheme \reff{numspde} of the solution of the SPDE \reff{SPDEdirichlet}.
\begin{Theorem}
Assume  {\bf{(MHT')}}, {\bf{(D)}}, {\bf{(MHL)}} and  {\bf{(MHD')}} hold. Then, the error $Error_{N}(u,v)$
converges to 0 as $N\rightarrow\infty$ and there exists a positive constant $C_L$ such that
\begin{eqnarray}\label{ratecv-uv}
Error_{N}(u,v)\leq C_L h.
\end{eqnarray}
\end{Theorem}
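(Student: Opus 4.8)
The plan is to transfer the discrete-time error bound $\text{Err}(h)_T^2 \leq C_L h^{1/2}$ from the Forward-BDSDE level (the final Theorem of Section \ref{Numeric scheme:section}) to the SPDE level via the probabilistic representation \eqref{representation} and the equivalence-of-norms Proposition \ref{equivalence:normes}, together with Lemma \ref{num-Scheme-SPDE}. The quantity $Error_N(u,v)$ is a spatial-$L^2$ (weighted by $\rho$) and time-$L^2$ aggregate of $|u_s^N(x)-u(s,x)|^2$ and $\|v_s^N(x)-v(s,x)\|^2$, so the natural strategy is: first fix $x$, bound the pointwise-in-$x$ error by the BDSDE error started at $(t,x)$; then integrate in $x$ against $\rho$, using Proposition \ref{equivalence:normes} to replace the functional norm $\int_\Oc |\cdot|^2\rho(x)dx$ by the probabilistic norm $\int_{\R^d}\E[|\cdot(X_s^{t,x})|^2]dx$ (up to the constants $c,C$); and finally invoke the order-$h^{1/2}$ bound on $\text{Err}(h)_T^2$. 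A small subtlety is the choice of weight $\rho$: one uses a bounded integrable weight (e.g. $\rho(x)=(1+|x|)^{-q}$ for $q$ large compared to the polynomial-growth exponents in \eqref{apriori1} and Proposition \ref{control}) so that the a priori moment bounds $\E[\sup|Y_s^{t,x}|^2+\cdots]\leq C_p(1+|x|^q)$ remain integrable against $\rho$, and so that the constants produced are genuinely of type $C_L$.

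First I would invoke Lemma \ref{num-Scheme-SPDE} and Theorem \ref{existence:SPDEs} to write, for $t_n \in \pi$ and $x \in \Oc$,
$$u^N(t_n,X_{t_n}^{t,x})-u(t_n,X_{t_n}^{t,x}) = Y_{t_n}^{N,t,x}-Y_{t_n}^{t,x},\qquad v^N(t_n,X_{t_n}^{t,x})-v(t_n,X_{t_n}^{t,x}) = Z_{t_n}^{N,t,x}-Z_{t_n}^{t,x},$$
recalling $Y_s^{t,x}=u(s\wedge\tau^{t,x},X_{s\wedge\tau^{t,x}}^{t,x})$ and $Z_s^{t,x}=D_\sigma u(s\wedge\tau^{t,x},X_{s\wedge\tau^{t,x}}^{t,x})$. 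Taking $t=0$, so that $X_s^{0,x}=X_s$ starts at $x$, and applying the lower bound in Proposition \ref{equivalence:normes} to the function $\varphi(\cdot)=|u^N(s,\cdot)-u(s,\cdot)|^2$ (and similarly for $v$), I obtain
$$c\int_\Oc |u^N(s,x)-u(s,x)|^2\rho(x)\,dx \leq \int_{\R^d}\E\big[|u^N(s,X_s^{x})-u(s,X_s^{x})|^2\big]\rho(x)\,dx = \int_{\R^d}\E\big[|Y_s^{N,0,x}-Y_s^{0,x}|^2\big]\rho(x)\,dx,$$
with the analogous inequality for the $v$–term after integrating in $s$ over $[t_n,t_{n+1}]$ and summing in $n$; on the right one recognizes, for each fixed $x$, exactly the ingredients of $\text{Err}(h)_T^2$ for the Forward-BDSDE started at $x$. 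Here one must be slightly careful because $\text{Err}(h)_T^2$ is defined with a $\max$ over grid intervals of $\E[\sup_{t\in[t_i,t_{i+1}]}|Y_t-Y_t^N|^2]$ and $u^N$ is only the grid-point value, but $Y_{t_n}^N$ is the continuous extension evaluated at a grid point, so $\E[|Y_{t_n}-Y_{t_n}^N|^2] \leq \text{Err}(h)_T^2$ directly; the $\sup_{0\le s\le T}$ outside in $Error_N(u,v)$ is handled by noting it suffices to bound each $\E[|Y_s-Y_s^N|^2]$ and the continuous extension's modulus of continuity between grid points is already absorbed in $\Rc(Y)_{\Sc^2}^\pi\leq C_L h$ from \eqref{regularite}.

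Assembling, $Error_N(u,v) \leq c^{-1}\int_{\R^d}\big(\sup_{0\le s\le T}\E[|Y_s^{0,x}-Y_s^{N,0,x}|^2] + \E[\int_0^T\|Z_s^{0,x}-Z_{\varphi(s)}^{N,0,x}\|^2ds]\big)\rho(x)\,dx \leq c^{-1}\int_{\R^d}\text{Err}(h)_T^2(x)\,\rho(x)\,dx$, where $\text{Err}(h)_T^2(x)$ emphasizes the dependence on the starting point $x$ of the forward diffusion. By the final Theorem of Section \ref{Numeric scheme:section}, $\text{Err}(h)_T^2(x)\leq C_L(1+|x|^q)h^{1/2}$ (tracking the polynomial-in-$x$ dependence hidden in the constants $C_L$, which comes from the a priori bounds \eqref{integrability}–\eqref{apriori1} and Proposition \ref{control}); choosing $\rho$ to decay fast enough that $\int_{\R^d}(1+|x|^q)\rho(x)\,dx<\infty$ gives $Error_N(u,v)\leq C_L h^{1/2}$. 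To upgrade the exponent from $h^{1/2}$ to the claimed $h$ in \eqref{ratecv-uv}, I would note that the only source of the $h^{1/2}$ (rather than $h$) in $\text{Err}(h)_T^2$ is the exit-time proxy term $\E[\zeta_L|\tau-\bar\tau|]$ via \eqref{proxytau:BGG}; since the SPDE error is measured with the Dirichlet null boundary condition $u(\cdot,x)=0$ on $\partial\Oc$ and with an integration against $\rho$ supported in the interior, the contribution of the boundary layer where $\tau$ and $\bar\tau$ disagree is spatially negligible — more precisely, the bad event $\{\bar\tau<\tau\}\cup\{\tau<\bar\tau\}$ forces $X$ to be within $O(h^{1/2})$ of $\partial\Oc$, and integrating the resulting estimate against $\rho$ over the interior yields an extra factor $h^{1/2}$, restoring the full order $h$.

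\textbf{Main obstacle.} The delicate point is precisely this last upgrade from $h^{1/2}$ to $h$: one must show that, after spatial integration against $\rho$, the exit-time mismatch term $\E[\zeta_L|\tau-\bar\tau|+\1_{\{\bar\tau<\tau\}}\int_{\bar\tau}^\tau\|Z_s\|^2ds]$ contributes only $O(h)$ rather than $O(h^{1/2})$. This requires a boundary-layer (tube) estimate: controlling $\int_{\R^d}\P_x[\text{dist}(X,\partial\Oc)\le Ch^{1/2}\text{ before exit}]\,\rho(x)\,dx \leq C_L h^{1/2}$, which combined with the $L^1$-bound $\E[|\tau-\bar\tau|]\leq C_L h^{1/2}$ of Theorem on the exit-time proxy gives the product $O(h)$. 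Establishing this cleanly — in particular handling the $\|Z_s\|^2$ term, which is only $L^1$ in time, near the boundary — is where the real work lies; everything else is bookkeeping with the equivalence of norms and the already-established discrete-time error bound.
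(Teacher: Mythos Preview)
The paper itself omits the proof and simply refers to \cite{matouetal13} (Theorem~5.2), so there is no detailed argument here to compare against. Your overall strategy --- use the representation \eqref{representation} together with Lemma~\ref{num-Scheme-SPDE} to identify $u^N-u$ and $v^N-v$ evaluated along the flow with $Y^N-Y$ and $Z^N-Z$, then pass from the weighted spatial norm to the probabilistic one via the equivalence of norms in Proposition~\ref{equivalence:normes}, and finally plug in the discrete-time BDSDE error bound --- is exactly the route that the reference \cite{matouetal13} follows in the whole-space case, and is clearly what the authors intend here as well. So up to that point your proposal is aligned with the paper.

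The genuine gap is your upgrade from $h^{1/2}$ to $h$. Your boundary-layer heuristic does not work as stated: the claim that ``integrating the resulting estimate against $\rho$ over the interior yields an extra factor $h^{1/2}$'' conflates the location of the \emph{starting point} $x$ with the location of the \emph{path at the time of exit}. For \emph{every} starting point $x$ in the interior of $\Oc$, the diffusion eventually approaches $\partial\Oc$ and the exit-time discrepancy $\E[|\tau^{0,x}-\bar\tau^{0,x}|]$ is of order $h^{1/2}$ (this is the content of \eqref{proxytau:BGG}, with constants that do not vanish for interior $x$). Integrating in $x$ against $\rho$ therefore gives no improvement: the integrand is uniformly $O(h^{1/2})$, not concentrated in an $O(h^{1/2})$-tube of starting points. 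The same objection applies to the $\int_{\bar\tau}^\tau \|Z_s\|^2\,ds$ term. Consequently, following your argument (and the method of \cite{matouetal13}) in the present random-terminal-time setting yields only $Error_N(u,v)\le C_L h^{1/2}$, matching the pathwise bound $\text{Err}(h)_T^2\le C_L h^{1/2}$ proved in Section~\ref{Numeric scheme:section}. The stated rate $C_L h$ in \eqref{ratecv-uv} would require either an improvement of the exit-time error beyond \eqref{proxytau:BGG} or an argument specific to the spatially integrated quantity that you have not supplied; as written, the rate appears to be carried over from the whole-space result in \cite{matouetal13} where no exit-time loss occurs.
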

We can follow the same arguments presented in \cite{matouetal13} (see Theorem 5.2). So, the proof is omitted.

\section{Implementation and numerical tests}
In this part, we are interested in implementing our numerical scheme. Our aim is only to test statically its convergence. Further analysis of the convergence of the used method and of the error bounds will be accomplished in a future work. All the numerical tests have been performed on a PC equipped with a processor Intel Core i7 (dual core) with 2.80 Ghz with codes written in C and compiled with GCC (GNU).
\subsection{Notations and algorithm}
Not forgetting that $\pi:=\{t_i=ih\;;\; i\leq N\},\, h:= T/N\;, \; N\in\N,$ is the time grid of the interval $[0,T]$.
We use a path-dependent algorithm, for every fixed path of the brownian motion $B$, we approximate by a regression method the solution of the associated PDE. Then, we replace the conditional expectations which appear in (\ref{2}) and (\ref{3}) by $\mathbb{L}^{2}(\Omega,\mathcal{P})$ projections on the function basis approximating $\mathbb{L}^{2}(\Omega,\mathcal{F}_{t_{n}})$. We compute $Z_{t_{n}}^{N}$ in an explicit manner  and we use I Picard iterations to compute $Y_{t_{n}}^{N}$ in an implicit way. Actually, we proceed as in \cite{GLW05}, except that in our case the solutions $Y_{t_{n}}^{N}$ and $Z_{t_{n}}^{N}$  are measurable functions of $(X^{N}_{t_{n}},(\Delta B_{i})_{n\leq i \leq N-1} )$. So, each solution given by our algorithm depends on the fixed path of B.
\subsubsection{Forward Euler scheme}
\label{Forward Euler scheme:subsection}
The discrete approximation of the forward diffusion process (\ref{forward}) is defined by

\begin{equation}\label{eulerdiscrete}
\left\{
\begin{array}{ll}
X_0^N=x,\\
X_{t_{i+1}}^N= X_{t_{i}}^N+b(X_{t_{i}}^N)(t_{i+1}-t_{i})+\sigma(X_{t_{i}}^N)(B_{t_{i+1}}-B_{t_{i}}),\quad i\leq N.
\end{array}
\right.
\end{equation}
Then, we approximate the exit time $\tau$ by the first time of the Euler scheme $(t,X_t^N)_{t\in\pi}$ from $D$ on the grid $\pi$:
\b*
\bar{\tau}:=\Inf\{t\in\pi : X_t^N\notin\Oc\}\wedge T.
\e*
The simulation of the diffusion stopped at the exit time is based on the approach of Gobet and Menozzi \cite{GM10}. In this approach, we simulate the diffusion with an Euler scheme with step size $h$ and stop it at discrete times $(t_i)_{i\in\N^*}$ in a modified domain, whose boundary has been appropriately shifted. The shift is locally in the direction of the inward normal $n(t,x)$ at any point $(t,x)$ on the parabolic boundary of $\Oc$, and its amplitude is equal to $c_0 |n^T \sigma|(t,x)\sqrt{h}$, with \be\label{c0}
c_0:= \Frac{\E[s^2_{\tau^+}]}{2\E[s_{\tau^+}]}= 0.5826\cdots,
\ee
where $s_0=0, \,\, \forall n\geq 1, s_{n}:=\sum_{i=1}^{n} G^i$, the $G^i$ being i.i.d standard centered normal variables, $\tau^+:=\inf\{n\geq 0:\, s_n >0\}$.
\subsubsection{Numerical scheme for BDSDEs}
\label{Numericalscheme for BDSDEs:subsection}
For each fixed path of $B$, the solution of (\ref{forward})-(\ref{BDSDE}) is approximated by $(Y^{N},Z^{N})$ defined by the following algorithm, given in the multidimensional case.\\
For $0\leq n\leq N-1$:
$\forall j_{1} \in \{1,\ldots,k\}$,
\begin{equation}\label{2}
Y_{t_{n},j_{1}}^{N} = \E_{t_{n}}\Big[Y_{t_{n+1},j_{1}}^{N} + hf_{j_{1}}(X_{t_{n}}^{N},Y_{t_{n}}^{N},Z_{t_{n}}^{N})+\sum_{j=1}^{l}g_{j_{1},j}(X_{t_{n+1}}^{N},Y_{t_{n+1}}^{N},Z_{t_{n+1}}^{N})
\Delta B_{n,j}\Big],
\end{equation}
$\forall j_{1} \in \{1,\ldots,k\} $ and $\forall j_{2} \in \{1,\ldots,d\}$
\begin{equation}\label{3}
h Z_{t_{n},j_{1},j_{2}}^{N} = \E_{t_{n}}\Big[Y_{t_{n+1},j_{1}}^{N}\Delta W_{n,j_{2}}+ \sum_{j=1}^{l}g_{j_{1},j}(X_{t_{n+1}}^{N},Y_{t_{n+1}}^{N},Z_{t_{n+1}}^{N})
\Delta B_{n,j}\Delta W_{n,j_{2}}\Big].
\end{equation}
We stress that at each discretization time, the solution of the algorithm depends on the fixed path of the brownian motion $B$.

\subsubsection{Vector spaces of functions}
At every $t_{n}$, we select $k(d+1)$ deterministic functions bases
$(p_{i,n}(.))_{1\leq i\leq k(d+1)}$ and we look for approximations of $Y_{t_{n}}^{N}$ and $Z_{t_{n}}^{N}$ which will be denoted respectively by $y_{n}^{N}$ and $z_{n}^{N}$, in the vector space spanned by the basis
$(p_{j_{1},n}(.))_{1\leq j_{1}\leq k}$
(respectively $(p_{j_{1},j_{2},n}(.))_{1\leq j_{1}\leq k,1\leq j_{2}\leq d}$).
 Each basis $p_{i,n}(.)$ is considered as a vector of functions of dimension $L_{i,n}$.
In other words, $P_{i,n}(.)=\{ \alpha .p_{i,n}(.), \alpha \in \mathbb{R} ^{L_{i,n}} \}$ where $\alpha$ is the coefficient of the projection on $\L^2(\Omega,\Fc_{t_n})$.\\
As an example, we cite the hypercube basis $(\textbf{HC})$ used in \cite{GLW05}. In this case, $p_{i,n}(.)$ does not depend nor on $i$ neither on $n$ and its dimension is simply denoted by $L$. A domain $D\!\subset\!\mathbb{R}^{d}$ centered on $X_{0}\!=\!x$, that is $D=\!\prod_{i=1}^{d}(x_{i}-a,x_{i}+a]$, can be partitionned on small hypercubes of edge $\delta$. Then, $D\!=\!\bigcup_{i_{1,\ldots,i_{d}}}\!D_{i_{1,\ldots,i_{d}}}$ where $D_{i_{1},\ldots,i_{d}}\!=\!(x_{i}-a+i_{1}\delta,x_{i}-a+i_{1}\delta]\times\ldots\times (x_{i}-a+i_{d}\delta,x_{i}-a+i_{d}\delta]$. Finally we define $p_{i,n}(.)$ as the indicator functions of this set of hypercubes.

\subsubsection{Description of the algorithm}
The main difference with the numerical scheme for FBDSDE in \cite{matouetal13} is the simulation of the first exit time of the forward diffusion process from the domain $\Oc$. The computation of this exit time $\bar{\tau}$ follows a simple and very efficient improved procedure given in \cite{GM10}. The purpose is to stop the Euler scheme at its exit time of a smaller domain in order to compensate the underestimation of exits and to achieve an error of order $o(\sqrt{h})$. The smaller domain is defined by $$\Oc^N:=\{x\in\Oc\, :\, d(x,\partial\Oc) > c_0\sqrt{h}|n^T \sigma(t,x)|\},$$
where  $n(t,x)$ is the inward normal vector at the closest point of  $x$ on the  boundary of $\Oc$ and $c_0$ is the constant given by \eqref{c0}. We shall interpret $|n^T \sigma(t,x)|$ as the noise amplitude along the normal direction to the boundary. Thus the efficient exit time of the Euler scheme is given by
$$\hat{\tau}^N:= \inf \{t_i >0 \,: \, X^N_{t_i}\notin \Oc^n\} \leq \bar{\tau}.$$
For more details on this procedure see the book of Gobet \cite{G13} (page 142-144).

Now the projection coefficients $\alpha$ are computed by using $ M$ independent Monte Carlo simulations
of $X\!_{\!t_{n}\!}\!^{N}$ and $\!\Delta W_{\!n\!}$ which will be respectively denoted by
 $X\!_{\!t_{n}\!}^{N,m}$ and $\Delta \!W_{n}^{m}$, $\!m\!=\!1\!,\ldots,\!M$. The algorithm is explicite as follows:\\[0.5cm]
$\rightarrow$ Initialization: For $n=N$, take $(y_{N}^{N,m})=(\Phi(X^{N,m}_{\hat{\tau}^N}))$ and $(z_{N}^{N,m})=0$ .\\
$\rightarrow$ Iteration: For $n=N-1,\ldots,0$:\\
$\bullet$ We approximate (\ref{3}) by computing for all $j_{1} \in \{1,\ldots,k\}$ and $j_{2} \in \{1,\ldots,d\}$
\begin{eqnarray*}
\alpha^{M}_{j_{1},j_{2},n}&=&\mathop{\rm arginf}\limits_ {\alpha} \frac{1}{M}\sum_{m=1}^{M}\!\Big|y_{n+1,j_{1}}^{N,M,I}(\!X^{N,m}_{t_{n+1}})\!\frac{\!\Delta W_{n,j_{2}}^{m}\!}{h}\\
&\!+\!&\sum_{j=1}^{l}\!g_{j_{1},j}\!\Big(\!X^{N,m}_{t_{n+1}}\!,\!y_{n+1}^{N,M}\!(\!X^{N,m}_{t_{n+1}}\!),z_{n+1}^{N,M}\!(\!X^{N,m}_{t_{n+1}}\!)\!\Big)\!\frac{\Delta B_{n,j}\Delta \!W_{n,j_{2}}^{m}}{h}
-\alpha.p_{j_{1},j_{2},n}^{m}\Big|^{2}.
\end{eqnarray*}
Then we set $z_{n,j_{1},j_{2}}^{N,M}(.)=(\alpha^{M}_{j_{1},j_{2},n}.p_{j_{1},j_{2},n}(.)),\textrm{ }j_{1} \in \{1,\ldots,k\}$, $j_{2} \in \{1,\ldots,d\}$. \\
$\bullet$ We use $I$ Picard iterations to obtain an approximation of $Y_{t_{n}}$ in (\ref{2}):\\
$\cdot$ For $i=0$: $\forall j_{1}\in \{1,\ldots,k\} $, $\alpha^{M,0}_{j_{1},n}=0$.\\
$\cdot$ For $i=1,\ldots,I$: We approximate (\ref{2}) by calculating $\alpha^{M,i}_{j_{1},n}$, $\forall j_{1} \in \{1,\ldots,k\}$, as the minimizer of:
\begin{eqnarray*}
\!\frac{1}{M}\sum_{m=1}^{M}\!\Big|y_{n+1,j_{1}}^{N,M}(X^{N,m}_{t_{n+1}})
\!+hf_{j_{1}}\!\Big(\!X^{N,m}_{t_{n}},\!y_{n}^{N,M,i-1}(X^{N,m}_{t_{n}}\!)\!,\!z_{n}^{N,M}\!(\!X^{N,m}_{t_{n}}\!)\!\Big)\\
+\!\sum_{j=1}^{l}\!g_{j_{1},j}\!\Big(\!X^{N,m}_{t_{n+1}}\!,\!y_{n+1}^{N,M}\!(\!X^{N,m}_{t_{n+1}}\!)\!,\!z^{N,M}_{n+1}\!(\!X^{N,m}_{t_{n+1}}\!)\!\Big)\!\Delta B_{n,j}-\!\alpha .p_{j_{1},k}^{m}\!\Big|^{2}.
\end{eqnarray*}
Finally, we define $y_{n}^{N,M}(.)$ as:
\begin{eqnarray*}
y_{n,j_{1}}^{N,M}(.)=(\alpha^{M}_{j_{1},n}.p_{j_{1},n}(.)),\forall j_{1} \in \{1,\ldots,k\}.
\end{eqnarray*}
\subsection{One-dimensional case (Case when $d=k=l=1$)}
\subsubsection{Function bases}
We use the basis ($\textbf{HC}$) defined above. So we set:
\begin{eqnarray*}
d_{1}=\min_{n,m}X^{m}_{t_{n}},\quad d_{2}=\max_{n,m}X^{m}_{t_{n}} \textrm{ and } L=\frac{d_{2}-d_{1}}{\delta}
\end{eqnarray*}
where $\delta$ is the edge of the hypercubes $(D_{j})_{1\leq j\leq L}$ defined by $D_{j}=\Big[d+(j-1)\delta,d+j\delta\Big),\forall j$.\\
We take at each time $t_{n}$
\begin{eqnarray*}
1_{D_{j}}(X^{N,m}_{t_{n}})=1_{[d+(j-1)\delta,d+j\delta)} (X^{N,m}_{t_{n}}),j=1,\ldots,L
\end{eqnarray*}
and
\begin{eqnarray*}
\!(\!p^{m}_{i,n}(.)\!)\!=\!\Big\{\!\sqrt{\frac{M}{card(D_{j})}}\!1_{D_{j}}\!(\!X^{N,m}_{t_{n}}\!)\!,\!1\! \leq \!j\!\leq\! L\Big\},i=0,1.
\end{eqnarray*}
$Card(D_{j})$ denotes the number of simulations of $X^{N}_{t_{n}}$ which are in our cube $D_{j}$.\\
This system is orthonormal with respect to the empirical scalar product defined by
\begin{eqnarray*}
<\psi_{1},\psi_{2}>_{n,M}:=\frac{1}{M}\!\sum_{m=1}^{M}\!\psi_{1}\!(\!X^{N,m}_{t_{n}}\!) \psi_{2}\!(X^{N,m}_{t_{n}}\!).
\end{eqnarray*}
In this case, the solutions of our least squares problems are given by:
\begin{eqnarray*}
\alpha^{M}_{1,n}&=&\frac{1}{M}\sum_{m=1}^{M} p_{1,n}(X^{N,m}_{t_{n}}) \Big\{ y_{n+1}^{N,M}(X^{N,m}_{t_{n+1}}) \frac{\Delta W^{m}_{n}}{h}\\
&+&g\Big(X^{N,m}_{t_{n+1}},y_{n+1}^{N,M}(X^{N,m}_{t_{n+1}}),z^{N,M,}_{n+1}(X^{N,m}_{t_{n+1}})\Big)\frac{\Delta B^{m}_{n}\Delta W^{m}_{n}}{h}\Big\},\\
\alpha^{M,i}_{0,n}&=&\frac{1}{M}\sum_{m=1}^{M} p_{0,n}(X^{N,m}_{t_{n}}) \Big\{y_{n+1}^{N,M}(X^{N,m}_{t_{n+1}})+ h f\Big(X_{t_{n}}^{N,m},y_{n}^{N,M,i-1}(X^{N,m}_{t_{n}}),z_{n}^{N,M}(X^{N,m}_{t_{n}})\Big)\\
&+&g\Big(X^{N,m}_{t_{n+1}},y_{n+1}^{N,M}(X^{N,m}_{t_{n+1}}),z^{N,M}_{n+1}(X^{N,m}_{t_{n+1}})\Big)\Delta B^{m}_{n}\Big\}.
\end{eqnarray*}
\begin{Remark}
We note that for each value of $M$, $N$ and $\delta$, we launch the algorithm $50$ times and we denote by $(Y_{0,m'}^{0,x,N,M})_{1\leq m' \leq50}$ the set of collected values. Then we calculate the empirical mean $\overline{Y}_{0}^{0,x,N,M,I}$ and the empirical standard deviation $\sigma^{N,M}$defined by:
\begin{equation}\label{approxY0}
\overline{Y}_{0}^{0,x,N,M}\!=\frac{1}{50}\sum_{m'=1}^{50}\!Y_{0,m'}^{0,x,N,M} \textrm{ and } \sigma^{N,M}\!=\!\sqrt{\frac{1}{49}\sum_{m'=1}^{50}|Y_{0,m'}^{0,x,N,M}\!-\!\overline{Y}_{0}^{0,x,N,M}|^{2}}.
\end{equation}
We also note before starting the numerical examples that our algorithm converges after at most three Picard iterations.
Finally, we stress that (\ref{approxY0}) gives us an approximation of $u(0,x)$ the solution of the SPDE (\ref{SPDEdirichlet}) at time $t=0$.
\end{Remark}

\subsubsection{Comparison of numerical approximations of the solutions of the FBDSDE and the FBSDE: the general case}

Now we take
\begin{eqnarray*}
\begin{cases}
&\Phi(x)=-x+K,\\
&f(t,x,y,z)=-\theta z-ry+(y-\frac{z}{\sigma})^{-}(R-r),\\
&g_1(t,x,y,z)=0.1z+0.5y+log(x)
\end{cases}
\end{eqnarray*}
and we set $\theta=(\mu-r)/\sigma$, $K=115$, $X_{0}=100$, $\mu=0.05$, $\sigma=0.2$, $r=0.01$, $R=0.06$, $\delta=1$, $N=20$, $T=0.25$ and we fix $d_{1}=60$ and $d_{2}=200$ as in \cite{GLW06}. We fix our domain $\Oc=]60,200[$, choosen large enough to compenstate the rate of convergence of the exit time approximation which is slow (of order $h^{1/2}$).\\
The functions $g_1$,$g_2$ and $g_3$ taken in what follows are examples of the function g. They are sufficiently regular and Lipschitz on $[60,200]\times\mathbb{R}\times\mathbb{R}$ and could be extended to regular Lipschitz functions on $\mathbb{R}^3$. In this case, the continuous Lipschitz assumption is satisfied.\\
We compare the numerical solution of our BDSDE with terminal time $\bar{\tau}$ (noted again $\overline{Y}_{t}^{t,x,N,M}$), the BDSDE's one (noted here by $\overline{Y}_{t,BDSDE}^{0,x,N,M}$ ) and the BSDE's one (noted here by $\overline{Y}_{t,BSDE}^{0,x,N,M}$ ), without $g$ and $B$. Note that each CPU-Time given in the tables is for 50 macro-runs of the algorithms.\\

When $t$ is close to maturity $t=t_{19}$\\

\begin{center}
\begin{tabular}{r c c c c}

\hline
$M$ & $\overline{Y}_{t_{19},BSDE}^{0,x,N,M}(\sigma^{N,M})$ &$\overline{Y}_{t_{19},BDSDE}^{0,x,N,M}(\sigma^{N,M})$& $\overline{Y}_{t_{19}}^{0,x,N,M}(\sigma^{N,M})$ \\

\hline\
  128& 13.748(0.879)& 15.453(0.948) & 13.392(1.021)\\

\hline
  512& 13.827(0.384)& 15.535(0.409) & 12.210(0.3580)\\

\hline

  2048& 13.762(0.223)& 15.465(0.240)& 12.051(0.197)\\

\hline

  8192& 13.781(0.091)& 15.485(0.097)& 14.814 (0.107)\\

\hline

  32768& 13.796(0.054)& 15.501(0.058)& 14.729 (0.053)\\

\hline

\end{tabular}
\vspace{0.5cm}
\end{center}

when $t=t_{15}$\\
\begin{center}
\begin{tabular}{r c c c }

\hline
$M$ & $\overline{Y}_{t_{15},BSDE}^{0,x,N,M}(\sigma^{N,M})$ &$\overline{Y}_{t_{15},BDSDE}^{0,x,N,M}(\sigma^{N,M})$& $\overline{Y}_{t_{15}}^{0,x,N,M}(\sigma^{N,M})$ \\

\hline\
  128& 14.168(0.905)& 17.894(1.096) & 13.049(1.116)\\

\hline
  512& 14.113(0.388)& 17.774(0.429)& 16.469(0.441) \\

\hline

  2048& 13.988(0.226)& 17.607(0.270) & 9.817(0.178)\\

\hline

  8192& 13.985(0.093)& 17.623(0.104)& 12.951(0.115)\\

\hline

  32768& 13.994(0.055)& 17.627(0.064)& 13.232(0.053)\\

\hline

\end{tabular}
\vspace{0.5cm}
\end{center}

when $t=0$\\
\begin{center}
\begin{tabular}{ r c c c c}

\hline
$M$ & $\overline{Y}_{0,BSDE}^{0,x,N,M}(\sigma^{N,M})$ & $\overline{Y}_{0,BDSDE}^{0,x,N,M}(\sigma^{N,M})$  & $\overline{Y}_{0}^{0,x,N,M}(\sigma^{N,M})$ & CPU-Time (sec)\\

\hline\
  128& 15.431(1.005)& 13.571(1.146)& 19.719(1.558)& 2.418
 \\

\hline
  512& 15.029(0.428)& 13.173(0.500)& 24.371(0.659)& 10.234 \\

\hline

  2048& 14.763(0.243)& 12.885(0.280) & 13.433(0.233) & 46.882\\

\hline

  8192& 14.718(0.098)& 12.825(0.106)& 12.543(0.122)& 220.531\\

\hline

  32768& 14.715(0.060)& 12.804(0.064)& 13.458(0.057)& 940.531\\

\hline

\end{tabular}
\vspace{0.5cm}
\end{center}

In the previous tables, we test our algorithm for different times (when they are close to the maturity and in initial time $t=0$) and we modify the number of Monte Carlo simulation $M$ for fixed number of time discretization $N$. We note that  
the numerical value of the BDSDE with random terminal time $\bar{\tau}$ converges to the value of classical BDSDE for $M$ large and this can be explained by the fact that the approximated value of the exit time is close to the maturity $T$.\\[0.5cm]
For $g_{2}(y,z)=0.1z+0.5y$ when $t=t_{19}$.

\begin{center}
\vspace{0.2cm}
\begin{tabular}{r c c c}

\hline
$M$ & $\overline{Y}_{t_{19},BDSDE}^{0,x,N,M}(\sigma^{N,M})$&$\overline{Y}_{t_{19}}^{0,x,N,M,I}(\sigma^{N,M})$ \\

\hline\
  128& 14.767(0.949)& 13.545(1.020)
 \\

\hline
  512& 14.850(0.410)& 12.862(0.358) \\

\hline

  2048& 14.781(0.240)& 12.739(0.197)
 \\

\hline

  8192& 14.801(0.097)& 14.401(0.107)
\\

\hline

  32768& 14.818(0.058)& 14.358(0.053)
\\

\hline

\end{tabular}
\vspace{0.5cm}
\end{center}
when $t=t_{15}$
\begin{center}
\begin{tabular}{r c c c}

\hline
$M$ & $\overline{Y}_{t_{15},BDSDE}^{0,x,N,M}(\sigma^{N,M})$&$\overline{Y}_{t_{15}}^{0,x,N,,M}(\sigma^{N,M})$ \\

\hline\
  128& 16.267(1.093)& 13.607(1.111)
 \\

\hline
  512& 16.166(0.428)& 15.191(0.443)
\\

\hline

  2048& 16.007(0.270)& 11.675(0.180)
\\

\hline

  8192& 16.024(0.104) & 13.551(0.114)\\

\hline

  32768& 16.029(0.064)& 13.689(0.053)\\

\hline

\end{tabular}
\vspace{1cm}
\end{center}

when $t=0$
\begin{center}
\begin{tabular}{r c c c c}

\hline
$M$ &$\overline{Y}_{0,BDSDE}^{0,x,N,M}(\sigma^{N,M})$&  $\overline{Y}_{0}^{0,x,N,M}(\sigma^{N,M})$ & CPU-Time (sec)\\

\hline\
  128& 13.821(0.063)& 17.811(1.529)& 2.401 \\

\hline
  512& 14.555(1.132)& 19.766(0.645) & 10.182 \\

\hline

  2048& 14.176(0.495)& 13.976(0.241)& 46.679\\

\hline

  8192& 13.899(0.277)& 13.635(0.122)& 223.694\\

\hline

  32768& 13.842(0.105)& 14.139(0.058)& 969.030\\

\hline

\end{tabular}
\vspace{0.5cm}
\end{center}

For $g_{3}(x,y)=logx +0.5y$:
when $t$ is close to maturity $t=t_{19}$.
\begin{center}
\vspace{0.5cm}
\begin{tabular}{r c c c}

\hline
$M$ & $\overline{Y}_{t_{19},BDSDE}^{0,x,N,M}(\sigma^{N,M})$&$\overline{Y}_{t_{19}}^{0,x,N,M}(\sigma^{N,M})$ \\

\hline\
  128& 15.452(0.948)& 13.392(1.021) \\

\hline
  512& 15.534(0.409)& 12.210(0.358) \\

\hline

  2048& 15.464(0.240)& 12.051(0.197)\\

\hline

  8192& 15.484(0.097)& 14.814(0.107)
\\

\hline

  32768& 15.501(0.058)& 14.729(0.053)\\

\hline

\end{tabular}
\vspace{0.5cm}
\end{center}
when $t=t_{15}$
\begin{center}
\begin{tabular}{r c c c}

\hline
$M$ & $\overline{Y}_{t_{15},BDSDE}^{0,x,N,M}(\sigma^{N,M})$&$\overline{Y}_{t_{15}}^{0,x,N,M}(\sigma^{N,M})$ \\
\hline\
  128& 18.253(1.068)& 12.782(1.003)

 \\

\hline
  512& 18.166(0.453)& 17.383(0.454)\\

\hline

  2048& 18.010(0.266)& 9.325(0.174)\\

\hline

  8192& 18.006(0.109)& 12.490(0.097)\\

\hline

  32768& 18.017(0.065)& 12.858(0.049)

\\

\hline

\end{tabular}
\vspace{0.5cm}
\end{center}

when $t=0$
\begin{center}
\begin{tabular}{r c c c c}

\hline
$M$ &$\overline{Y}_{0,BDSDE}^{0,x,N,M}(\sigma^{N,M})$&  $\overline{Y}_{0}^{0,x,N,M,}(\sigma^{N,M})$ & CPU-Time (sec)\\

\hline\
  128& 12.071(0.054)& 20.496(1.421) & 2.401\\

\hline
  512& 12.075(0.088)& 27.093(0.654) & 10.322\\

\hline

  2048& 12.122(0.218)& 13.362(0.221)& 47.039\\

\hline

  8192& 12.384(0.381)& 11.878(0.101)& 221.504\\

\hline

  32768& 12.791(0.903)& 12.948(0.051)& 938.669\\

\hline

\end{tabular}
\vspace{0.5cm}
\end{center}
In the previous tables, we test our algorithm for different examples of the function $g$ ($g_1$ and $g_2$ are dependent in $z$, $g_3$ is independent of $z$).

\begin{figure}[h!]
\includegraphics[width=0.75\textwidth]{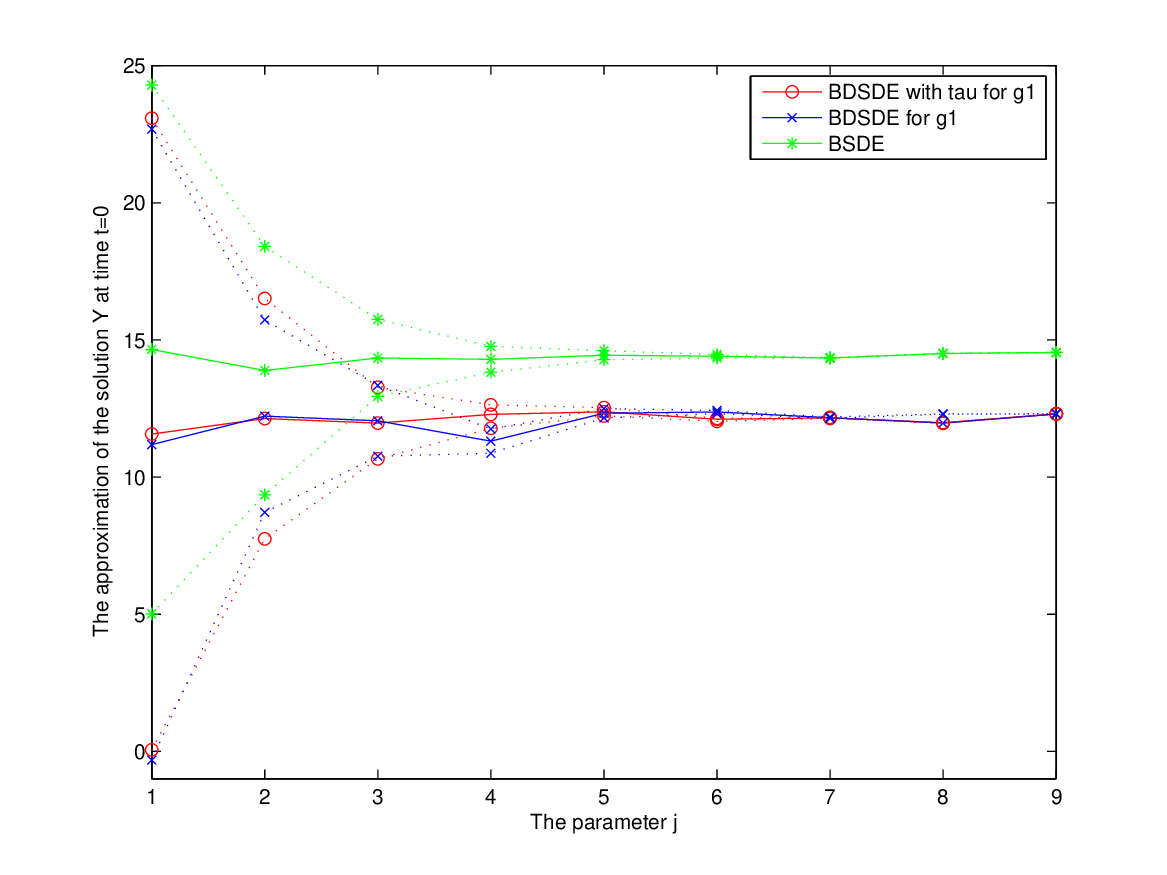} 
\caption{\label{figure1}Comparisom of the BSDE's solution, the BDSDE's one and the solution of BDSDE with random time for $g_{1}(x,y,z)= log(x)+ 0.5 y+ 0.1 z$. Confidence intervals are with dotted lines. }
\end{figure}

We see on Figure \ref{figure1} and \ref{figure2} the impact of the function $g$ on the solution; we modify $N$, $M$ and $\delta$ as in \cite{LGW06}, by taking these quantities as follows: First we fix $d_{1}=40$ and $d_{2}=180$ (which means that  $x \in [d_{1},d_{2}]=[40,180]$ and in this case our continuous lipschitz assumptions are satisfied). Let $j \in \mathbb{N}$, we take $\alpha_{M}=3$, $\beta=1$, $N=2(\sqrt{2})^{(j-1)}$, $M=2(\sqrt{2})^{\alpha_{M}(j-1)}$ and $\delta=50/(\sqrt{2})^{(j-1)(\beta+1)/2}$. Then, we draw the map of each solution at $t=0$ with respect to j. We remark from the figures that numerical values of the BDSDE with random terminal time coincide with that of the clasical BDSDE after just few variation of the parameters. This allow us to think about performing the rate of convergence of our algorithm by getting weaker estimates for the BDSDE (as Bouchard and Menozzi for the classical BSDEs \cite{BM09}).

\begin{figure}[h!]
\centering
\includegraphics[width=0.75\textwidth]{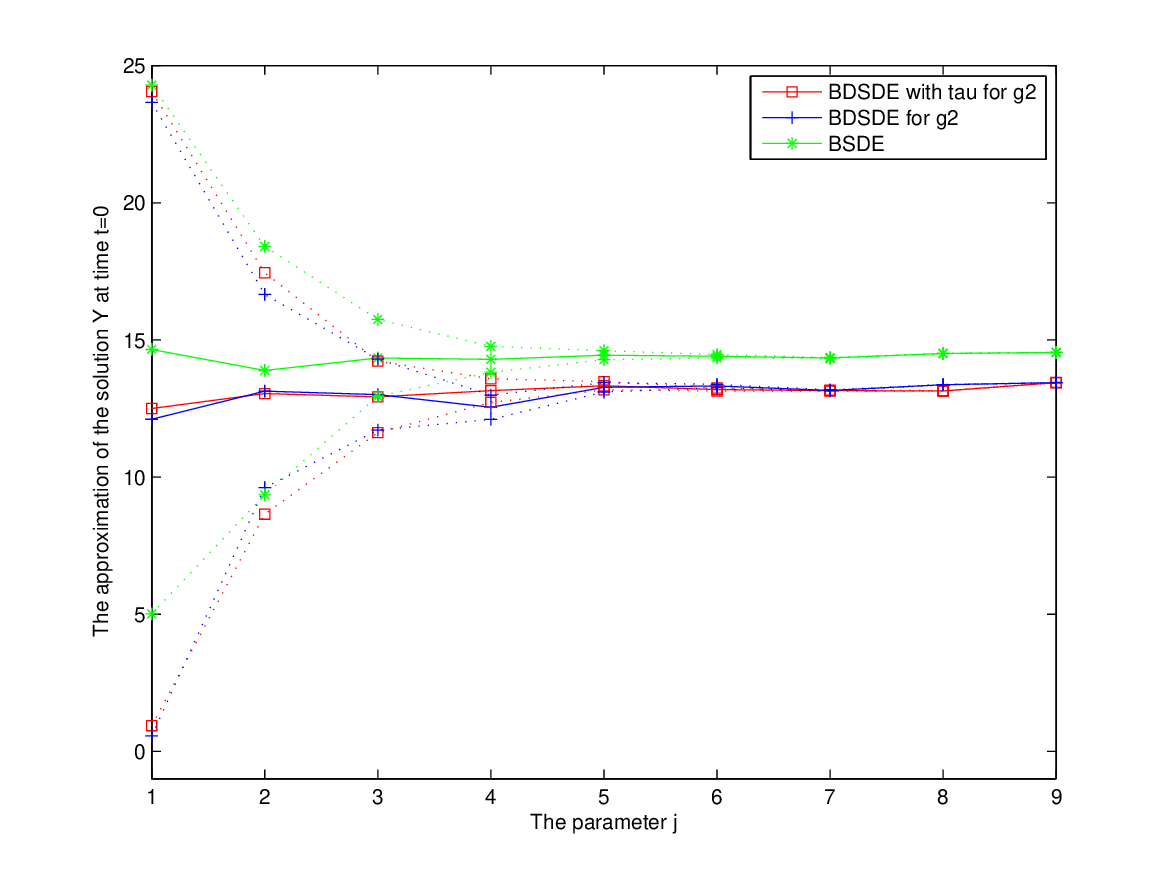} 
\caption{\label{figure2}Comparisom of the BSDE's solution, the BDSDE's one and the solution of BDSDE with random time for $g_{2}(x,y,z)= 0.5 y+ 0.1 z$. Confidence intervals are with dotted lines.}
\end{figure}

\section{Conclusion}

The main result of this paper is to develop  a discrete-time approximation of a Forward-Backward Doubly SDE with finite stopping time horizon, namely the first exit time of a forward SDE from a  domain $\Oc$. More precisely we provide a rate of convergence of order $ h^{1/2}$  for the square of Euler time discretization error  for the scheme \eqref{euldiscrete}-\eqref{Yn} (Theorem \ref{finalerror}). This order is performed compared to the one obtained by Bouchard and Menozzi \cite{BM14} in the case of BSDE with random terminal time (the strong error) thanks to the recent work of Bouchard, Geiss and Gobet  \cite{BGG13}. Moreover, Euler scheme for a class of semilinear SPDEs with Cauchy-Dirichlet condition is provided via the scheme of the Forward-Backward Doubly SDE \eqref{euldiscrete}-\eqref{Yn}, this gives a probabilistic point of view for the approximation error of this class of SPDEs.
\paragraph*{Aknowledgment} The authors whish to thank the associate editor and the anonymous referee for the pertinent remark she/he made.
\bibliographystyle{acm}
\bibliography{NumBDSDE}


\end{document}